\newtheorem{theorem}{Theorem}
\newtheorem{corollary}[theorem]{Corollary}
\newtheorem{definition}[theorem]{Definition}
\newtheorem{lemma}[theorem]{Lemma}
\newtheorem{proposition}[theorem]{Proposition}
{\theorembodyfont{\rmfamily}
\newtheorem{remark}[theorem]{Remark}
}
\newenvironment{proof}[1][Proof]{\noindent\textbf{#1} }{\ \rule{0.5em}{0.5em}}
\newcommand*\re{\mathbb{R}}
\newcommand*\Omegabar{\overline{\Omega}}
\newcommand*\delomega{\partial\Omega}
\newcommand*\cnn{\omega_{n-1}^{1/(n-1)}}
\newcommand*\intdelomega{\int_{\partial\Omega}}
\newcommand*\intomega{\int_{\Omega}}
\newcommand*\WBOmega{\mathcal{B}_1(\Omega)}
\newcommand*\WBDiskrad{W^{1,n}_{0,rad}(B_1)\cap \mathcal{B}_1(B_1)}
\newcommand*\dxb{{|x|^{\beta}}}
\newcommand*\dH{d\mathcal{H}^{n-1}}
\begin{document}


\title{Extremals for the Singular Moser-Trudinger Inequality via n-Harmonic Transplantation}
\maketitle
\date{}

\centerline{\scshape Gyula Csat\'{o}$^1,$ Prosenjit Roy$^2$ and Van Hoang Nguyen$^3$ }
\medskip 
{\footnotesize
 \centerline{1. Universitat Polit\`ecnica de Catalunya and member of BGSMath Barcelona, Spain, }
 \centerline{supported by Fondecyt grant no. 11150017, by the Mar\'ia de Maeztu Grant MDM-2014-0445 }
  \centerline{ and by MINECO grant MTM2017-84214-C2-1-P.} 
   \centerline{2. Indian Institute of Teechnology, Kanpur, India}
   \centerline{3. Institute of Mathematics, Vietnam Academy of Science and Technology,}
   \centerline{Hanoi, Vietnam}

}

\smallskip

\begin{abstract}
The Moser-Trudinger embedding has been generalized in [Adimurthi A.; Sandeep K., A singular Moser-Trudinger embedding and its applications, \textit{NoDEA Nonlinear Differential Equations Appl.}, 13 (2007), no. 5-6, 585--603] to the following weighted version: if  $\Omega\subset\mathbb{R}^n$ is bounded, $\omega_{n-1}$ is the $\mathcal{H}^{n-1}$ measure of the unit sphere, then for  $\alpha>0$ and $\beta\in [0,n)$,
$$
  \sup_{u\in\mathcal{B}_1}\int_{\Omega}\frac{e^{\alpha |u|^{n/(n-1)}}}{|x|^{\beta}}\leq C \  \Leftrightarrow  \   \frac{\alpha}{\alpha_n}+\frac{\beta}{n}\leq1,\qquad  
$$ where 
$\alpha_n=n\cnn$ and $\mathcal{B}_1 =  \left\{ u \in W_0^{1, n}(\Omega) \  |  \ \int_{\Omega} |\nabla u |^n \leq1 \right\}$.
We prove that the supremum is attained  on any domain $\Omega$. The paper also fills in the gaps in the proof of [Lin K.C.,  Extremal functions for Moser's inequality, \textit{Trans. of. Am. Math. Soc.}, 384 (1996), 2663--2671], which deals with the case $\beta=0.$ \end{abstract}

\let\thefootnote\relax\footnotetext{\textit{2010 Mathematics Subject Classification.} Primary 26D10 (46E35, 52A40) }
\let\thefootnote\relax\footnotetext{\textit{Key words and phrases.} Moser Trudinger inequality, extremal function.}

\section{Introduction}\label{section:introduction}

Let $\Omega\subset\re^n$ be a bounded open smooth set.
The Moser-Trudinger imbedding, which is due to Trudinger \cite{Trudinger} and in its sharp form to Moser \cite{Moser}, states that the following supremum is finite
$$
  \sup_{\substack {v\in W^{1,n}_0(\Omega) \\ \|\nabla v\|_{L^n}\leq 1}}
  \int_{\Omega}\left(e^{n\cnn v^\frac{n}{n-1}}-1\right)<\infty.
$$
First it has been shown by Carleson-Chang \cite{Carleson-Chang} that the supremum is actually attained, if $\Omega$ is a ball. In \cite{Struwe 1}, Struwe proved for $n=2$ that the result remains true if $\Omega$ is close to a ball. Using the harmonic transplantation method Flucher \cite{Flucher} generalized this result to arbitrary  bounded domains in $\re^2.$ If $n=2$ and $\Omega$ is the unit disk some new proofs have been obtained of the Moser-Trudinger inequality, respectively of the Carleson-Chang's result in Mancini-Martinazzi \cite{Gabriele Mancini and Martinazzi}. Other results dealing with critical points and extremal functions of the Moser-Trudinger inequality, or its sharper version, have also been obtained in  Malchiodi-Martinazzi \cite{Malchiodi-Martinazzi} (for $\Omega$ equal the unit disk), Adimurthi-Druet \cite{Adimurthi-Druet} and Yang \cite{Yunyan Yang 2006}, \cite{Yunyan Yang 2006 manifold} (for a version on manifolds) and \cite{Yunyan Yang 2015 JDE}. These results use blow up analysis and 
are all 
restricted to $2$ dimensions.
See also Adimurthi-Tintarev \cite{Adi-Tintarev}, Mancini-Sandeep \cite{manchini}, \cite{ngo}, \cite{ngyuen}, \cite{taka} and Yang \cite{Yunyan Yang manifold 2} and the references in these papers for other recent developments on the subject. 
However, the only result dealing with the extremal functions in higher dimensions $n\geq 3$
and more general bounded domains than balls, is by Lin [25]. His proof is however rather sketchy
in some parts and we believe he missed several important and nontrivial points. In particular the
entire construction of Section \ref{section:domain to ball} in this paper is necessary in the case $\beta=0$ too, as well as the use
of all the nontrivial results on the $n$-Laplacian we summarized in the Appendix. Setting $\beta=0$ in
this present paper we implicitly give rigorous proofs to all of Lin’s claims.

The Moser-Trudinger embedding has been generalized by Adimurthi-Sandeep \cite{Adi-Sandeep} to a singular version, which reads as the following:
If $\alpha>0$ and $\beta\in[0,n)$ is such that
\begin{equation}
 \label{intro:eq:alpha and beta sum}
  \frac{\alpha}{\alpha_n}+\frac{\beta}{n}\leq 1,\quad\text{ where }\alpha_n= n\cnn,
\end{equation}
$\omega_{n-1}$ is the $\mathcal{H}^{n-1}$ measure of the unit sphere,
then the following supremum is finite
\begin{equation}
 \label{eq:intro:supremum sing moser trud}
  \sup_{\substack {v\in W^{1,n}_0(\Omega) \\ \|\nabla v\|_{L^n}\leq 1}}
  \int_{\Omega}\frac{e^{\alpha |v|^\frac{n}{n-1}}-1}{|x|^{\beta}}<\infty\,.
\end{equation}
In the case $n=2$ it was proven first in Csat\'o-Roy \cite{Csato Roy Calc var Pde}, \cite{Csato Roy Comm in PDE} that the supremum is attained. Afterwards, using blow-up analyis, the same result and some refinements have also been obtained by Yang and Zhu \cite{Yunyan Yang and Zhu JFA 2017}. Their proof is based on classification theorems in $2$ dimensions by Chen and Li \cite{Chen Li Duke 1991}, \cite{Chen Li Duke 1995}. Recently, also in $2$ dimensions, a third proof was given by Lula-Mancini \cite{Lula and Mancini Gabriele}, also depending on \cite{Chen Li Duke 1991}.  We point out that there is no available Chen-Li type classification result in higher dimensions. The aim of this paper is to generalize \cite{Csato Roy Calc var Pde} to higher dimensions and we prove the following theorem.

\begin{theorem}\label{theorem:intro:Extremal for Singular Moser-Trudinger}
Let $n\geq 2$ and $\Omega\subset\re^n$ be a bounded open smooth set. Let $\alpha>0$ and $\beta\in[0,n)$ be such that \eqref{intro:eq:alpha and beta sum} is satisfied.
Then there exists $u\in W_0^{1,n}\left(\Omega\right)$ such that $\|\nabla u\|_{L^n}\leq 1, $  $u\geq 0$ and
$$
  \sup_{\substack {v\in W^{1,n}_0(\Omega) \\ \|\nabla v\|_{L^n}\leq 1}}
  \int_{\Omega}\frac{e^{\alpha |v|^{\frac{n}{n-1}}}-1}{|y|^{\beta}}dy=\int_{\Omega
  }\frac{e^{\alpha u^{\frac{n}{n-1}}}-1}{|y|^{\beta}}dy.
$$
\end{theorem}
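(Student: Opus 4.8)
The strategy is the classical concentration-compactness / blow-up dichotomy in the spirit of Carleson--Chang, Flucher and Lin, combined with the $n$-harmonic transplantation machinery to transfer the sharp constant from the ball to the general domain $\Omega$. First I would take a maximizing sequence $u_k\in\mathcal{B}_1$ for the functional $J(v)=\intomega\frac{e^{\alpha|v|^{n/(n-1)}}-1}{|y|^\beta}dy$; by replacing $u_k$ with $|u_k|$ and then with its $n$-symmetric (Schwarz-type) rearrangement adapted to the weight $|y|^{-\beta}$ — or, more precisely, by the $n$-harmonic transplantation of the radial extremal profile on the ball — one may assume $u_k\geq 0$ and that $\|\nabla u_k\|_{L^n}=1$. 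Passing to a subsequence, $u_k\rightharpoonup u$ weakly in $W^{1,n}_0(\Omega)$, $u_k\to u$ strongly in $L^p$ for all $p<\infty$ and a.e. The dichotomy is: either $u\not\equiv 0$, or $u\equiv 0$ and the sequence concentrates (its gradient $L^n$-mass converges to a Dirac mass at some point $x_0\in\Omegabar$).

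In the \emph{non-concentration} case, one shows $u$ is itself an extremal. The point is that if $\|\nabla u\|_{L^n}=1$ then by weak lower semicontinuity and a Brezis--Lieb / Vitali argument (using that the nonlinearity $e^{\alpha t^{n/(n-1)}}-1$ is, after the splitting $u_k=u+(u_k-u)$ with $\|\nabla(u_k-u)\|_{L^n}^n\le 1-\|\nabla u\|_{L^n}^n+o(1)$, uniformly integrable — this is exactly where the subcriticality furnished by $\|\nabla(u_k-u)\|_{L^n}<1$ and the Adimurthi--Sandeep inequality \eqref{eq:intro:supremum sing moser trud} give equi-integrability of $e^{\alpha|u_k|^{n/(n-1)}}/|y|^\beta$) one gets $J(u_k)\to J(u)$, so $u$ attains the supremum. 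If instead $0<\|\nabla u\|_{L^n}<1$ one derives a contradiction: rescaling $u$ to the unit ball in $\mathcal{B}_1$ strictly increases $J$ unless the remainder mass is zero, again via the strict convexity/superlinearity of the exponential.

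The hard part is \emph{ruling out concentration}, i.e. showing that if $u_k$ concentrated at a point then $\sup J \le 1 + \text{(volume term)}$, and then exhibiting an explicit test function whose energy \emph{strictly exceeds} this threshold, which forces the maximizing sequence not to concentrate. This is where $n$-harmonic transplantation enters decisively: one uses the conformal-type Green's function $G$ of the $n$-Laplacian on $\Omega$ with pole at the concentration point, its Robin-type constant (the "harmonic radius"), and the precise asymptotics of $G$ near the pole to (i) obtain the sharp upper bound for the concentrating case — a quantitative Carleson--Chang inequality of the form $\limsup J(u_k) \le \frac{|\Omega|_\beta}{\text{(something)}}\big(1+e^{1+1/2+\dots+1/(n-1)+\text{Robin constant contribution}}\big)$ — and (ii) construct, by transplanting the truncated Moser/Carleson--Chang extremal from $B_1$ via $G$, a competitor $\phi_\varepsilon\in\mathcal{B}_1$ with $J(\phi_\varepsilon)$ strictly above that bound. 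The singular weight $|y|^{-\beta}$ requires choosing the concentration analysis centered appropriately (the origin is the only place the weight matters, so one must separately handle concentration at $0$ versus away from $0$), and the transplantation must be done with the weighted Green's function; matching the constants $\alpha/\alpha_n+\beta/n\le 1$ is exactly what makes the test-function energy beat the threshold in the critical case. I expect the delicate estimates of $G$'s expansion and the bookkeeping of the constant $C(n,\beta)$ in the Carleson--Chang bound to be the main obstacle; the rest is a by-now-standard variational argument.
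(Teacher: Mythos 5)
Your skeleton matches the paper's: a concentration-compactness alternative splitting into a convergent and a concentrating case, Vitali/equi-integrability to handle the first, and the chain
$F_\Omega^{\sup}=F_\Omega^\delta(0)=I_\Omega(0)^{n-\beta}F_{B_1}^\delta(0)<I_\Omega(0)^{n-\beta}F_{B_1}^{\sup}\le F_\Omega^{\sup}$
(powered by $n$-harmonic transplantation between $\Omega$ and $B_1$ and the strict Carleson--Chang inequality on the ball) to rule out concentration at $0$.

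However, your plan does not identify the step that actually carries the difficulty, and as written it would not close. The ball-to-domain inequality $F_\Omega^{\sup}\ge I_\Omega(0)^{n-\beta}F_{B_1}^{\sup}$ is not a formal consequence of transplanting radial profiles through the $n$-Green's function $G_{\Omega,0}$; after the coarea slicing one has to prove, for every superlevel set $A_r=\{G_{\Omega,0}>-\frac{1}{\cnn}\log r\}$,
$$
  \omega_{n-1}^{n/(n-1)}\,I_\Omega(0)^{n-\beta}
  \le \frac{1}{r^{n-\beta}}\int_{\partial A_r}\frac{|y|^{-\beta}}{|\nabla G_{\Omega,0}(y)|}\,\dH(y).
$$
For $n=2$ (or for $\beta=0$ in any $n$) Flucher and Lin obtain this from three elementary estimates: $I_\Omega\le R_\Omega$, a (weighted) isoperimetric inequality, and H\"older combined with $\int_{\partial\Omega}|\nabla G|^{n-1}=1$. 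For $n\ge3$ and $\beta\ne 0$ that chain fails: the naturally suggested weighted isoperimetric inequality is false even for balls not centered at the origin, and the other two steps are no longer sharp enough. The paper's essential new ingredient, which your plan is missing, is the weighted isoperimetric inequality of Alvino, Brock, Chiacchio, Mercaldo and Posteraro,
$$
  \int_A |y|^{-\beta}\,dy \le \frac{1}{(n-\beta)\omega_{n-1}^{1/(n-1)}}
  \left(\int_{\partial A} |y|^{-\frac{n-1}{n}\beta}\,\dH(y)\right)^{n/(n-1)},
$$
applied to the superlevel sets of $G_{\Omega,0}$; combined with H\"older, Proposition \ref{proposition:properties of Green's function} (b), and a monotonicity-in-$t$ argument this yields both $\int_\Omega|y|^{-\beta}\ge\frac{\omega_{n-1}}{n-\beta}I_\Omega(0)^{n-\beta}$ and the level-set inequality above. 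Without this input the competitor you transplant from the ball cannot be shown to beat the concentration threshold, so the contradiction does not materialize.

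Two smaller points. The transplantation uses the ordinary $n$-Green's function of $\Delta_n$, not a ``weighted Green's function''; the weight $|y|^{-\beta}$ lives in the functional and is absorbed by the isoperimetric inequality on the level sets, not by modifying $G$. And the non-concentration branch is simpler than you make it: Lions' alternative gives $F_\Omega(u_k)\to F_\Omega(u)$ outright once concentration is excluded, so the case split on $\|\nabla u\|_{L^n}<1$ and the rescaling argument you sketch are unnecessary.
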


 It is an immediate consequence of this theorem, that $u\in W_0^{1,n}$ gives a weak solution, for some $\mu>0,$ to the following semilinear elliptic eqaution involving the $n$-Laplacian
$$
   \left\{\begin{array}{rl}
   -\Delta_n u=\mu\,u^{\frac{1}{n-1}}\, e^{\alpha u^{\frac{n}{n-1}}}\,\frac{1}{|y|^{\beta}} & \text{ in
   }\Omega
   \smallskip \\
   u\geq 0 & \text{ in }\Omega
   \smallskip \\
   u=0 & \text{ on }\partial\Omega,
   \end{array}\right.
$$
satisfying moreover $\int_{\Omega}|\nabla u|^n=1.$

The proof of Theorem \ref{theorem:intro:Extremal for Singular Moser-Trudinger} follows the ideas of Flucher \cite{Flucher}, respectively Lin \cite{Lin K C} and we have given an overview of the method in Csat\'o-Roy \cite{Csato Roy Calc var Pde}, which we will not repeat here.
The case $0\notin\Omegabar$ is rather elementery and exactly as in the $2$-dimensional case, see \cite{Csato Roy Calc var Pde}, and the main difficulty is the case $0\in\Omega.$
The proof of Theorem \ref{theorem:intro:Extremal for Singular Moser-Trudinger}, roughly speaking, consists in reducing the problem to a kind of isoperimetric problem with density involving the $n$-Green's function. The $n$-Green's function $G_{\Omega,0}$ of a general domain $\Omega$ containing the origin has the form
$$
  G_{\Omega,0}(y)=-\frac{1}{\cnn}\log|y|-H(y),
$$
where $H$ is the regular part. The conformal incenter at $0$ is then defined by
$$
  I_{\Omega}(0)=e^{-\cnn H(0)}.
$$
This is usually called the conformal inradius at $0,$ see Flucher \cite{Flucher book} and we have adopted the name conformal incenter at $0$ in \cite{Csato Roy Calc var Pde} mistakenly. However, in this paper, we will stick to this name for consistency with \cite{Csato Roy Calc var Pde}, \cite{Csato Roy Comm in PDE}. 
The isoperimetric problem to which the question of the existence of extremal function is reduced is the following inequality:
\begin{equation}
 \label{eq:intro:main conjecture for G}
    \omega_{n-1}^{\frac{n}{n-1}}I_{\Omega}^{n-\beta}(0)
  \leq
  \int_{\partial\Omega}\frac{1}{|y|^{\beta}|\nabla G_{\Omega,0}(y)|}\dH(y)\quad
  \text{ for any $\Omega$ with $0\in\Omega$}.
\end{equation}
There is equality for balls cetered at the origin. Let us now point out the two main differences and difficulties compared to the $2$-dimensional case:
\smallskip

\textit{1.} The reduction of the problem to \eqref{eq:intro:main conjecture for G} uses the $n$-harmonic transplantation. This uses existence, regularity and other properties of th $n$-Laplace equation, which is a linear equation only if $n=2.$ Solutions to the $n$-Laplace equation have moreover weaker regularity properties if $n> 2.$ The difficulties are more of technical type and are mostly relevant in Section \ref{section:domain to ball}.
\smallskip

\textit{2.} 
If $n=2$ then \eqref{eq:intro:main conjecture for G} was proven by the following three inequalities (where $R_{\Omega}$ is the radius of $\Omega^{\ast},$ i.e. $\pi R_{\Omega}^2=|\Omega|$)
\begin{align*}
 \omega_1^2I_{\Omega}^{2-\beta}(0)
 \,\substack{\phantom{\text{(a)}}\\ \leq \\ \text{(a)}}&\, 
 \omega_1^2 R_{\Omega}^{2-\beta}=\omega_1^2\left(\frac{|\Omega|}{\pi}\right)^ 
 {\frac{2-\beta}{2}}
  \,\substack{\phantom{\text{(b)}}\\ \leq \\ \text{(b)}}\, 
  \left(\intdelomega\frac{1}{|y|^{\beta/2}}d\mathcal{H}^1(y)\right)^2
  \smallskip \\
  =& \left(\intdelomega\frac{\sqrt{|\nabla G|}}{\sqrt{|y|^{\beta}|\nabla G|}}d\mathcal{H}^1(y)\right)^2
  \,\substack{\phantom{\text{(c)}}\\ \leq \\ \text{(c)}}\,
  \intdelomega\frac{1}{|y|^{\beta}|\nabla G|}d\mathcal{H}^1(y).
\end{align*}
The estimate (a), i.e. $I_{\Omega}\leq R_{\Omega}$ is standard, (b) is a weighted isoperimetric inequality and (c) is just H\"older inequality and the elementery property $\int_{\delomega}|\nabla G|=1.$ This is the method followed by Flucher \cite{Flucher}, Csat\'o-Roy \cite{Csato Roy Calc var Pde}. If $\beta=0,$ then the same steps work in any dimension, just by using the classical isoperimetric inequality and this is what was used by Lin \cite{Lin K C}. If $\beta\neq0,$ then the corresponding weighted isoperimetric inequality was proven by Csat\'o \cite{Csato DIE} if $n=2.$ However, a higher dimensional version of the appropriate weighted isoperimetric inequality fails, see Csat\'o \cite{Csato Isop with density n dim} Theorem 9 (ii) and Example 11. Example 11 shows precisely that such an isoperimetric inequality even fails for any ball not centered at the origin. Moreover, if $n\geq 3$ both estimates (a) and (c) are too generous to prove 
\eqref{eq:intro:main 
conjecture 
for G}, as numerical evidence suggests. So a completely new method had to be developed to prove the higher dimensional case, without using any of the estimates (a)-(c).This new method relies on a careful analysis of the properties of $G_{\Omega,0}$ and on a different weighted isoperimetric inequality by Alvino, Brock, Chiacchio, Mercaldo and Posteraro \cite{Alvino} Theorem $1.1$ (ii), which reads as.
$$
   \int_{\Omega}|y|^{-\beta} dy \leq C\left(\int_{\partial \Omega} |y|^{-\frac{n-1}n\beta} d\mathcal{H}^{n-1}(y)\right)^{\frac{n}{n-1}}
   \quad\text{ for any smooth set $\Omega\subset\re^n,$}
$$
where $C$ is such that equality holds for balls centered at the origin.
The proof of \eqref{eq:intro:main conjecture for G} is contained in Section \ref{section:n greens function}.
\smallskip

\section{Notations and Definitions}\label{section:notation}

Throughout this paper $n\geq 2$ is an integer and $\Omega\subset\re^n$ will denote a bounded open set with smooth boundary $\delomega.$ Its $n$-dimensional Lebesgue measure is written as $|\Omega|.$ The $(n-1)$-dimensional Hausdorff measure is denoted by $\mathcal{H}^{n-1}.$ Balls with radius $R$ and center at $x$ are written $B_R(x)\subset\re^n;$ if $x=0,$ we simply write $B_R$. The space $W^{1,n}(\Omega)$ denotes the usual Sobolev space of functions and $W^{1,n}_0(\Omega)$ those Sobolev functions with vanishing trace on the boundary. Throughout this paper $\alpha,\beta\in \re$ are two constants satisfying $\alpha>0,$ $\beta\in[0,n)$ and 
$$
  \frac{\alpha}{\alpha_n}+\frac{\beta}{n}\leq 1,\quad\text{ where }\alpha_n= n\cnn,
$$
where $\omega_{n-1}$ is the $\mathcal{H}^{n-1}$ measure of the unit sphere.

$-$ We define the the funtctionals $F_{\Omega},J_{\Omega}:W_0^{1,n}(\Omega)\to \re$ by
\begin{align*}
 F_{\Omega}(u)=&\int_{\Omega}\frac{e^{\alpha |u|^{\frac{n}{n-1}}}-1}{|x|^{\beta}}\,dx, \smallskip \\
 J_{\Omega}(u)=&\int_{\Omega}\left(e^{\alpha_n |u|^{\frac{n}{n-1}}}-1\right)dx. \smallskip \\
\end{align*}

$-$ We say that a sequence $\{u_i\}\subset W_0^{1,n}(\Omega)$ concentrates at $x\in\Omegabar$ if 
$$
  \lim_{i\to\infty}\|\nabla u_i\|_{L^n}=1\quad\text{ and }\quad\forall\;\epsilon>0\quad
  \lim_{i\to\infty}\int_{\Omega\backslash B_{\epsilon}(x)}|\nabla u_i|^n=0.
$$
This definition is equivalent to the convergence $|\nabla u_i|^ndx\rightharpoonup \delta_x$ weakly in measure, where $\delta_x$ is the Dirac measure at $x.$   We will use the following well known property of concentrating sequences: if $\{u_i\}$ concentrates, then $u_i\rightharpoonup 0$ in $W^{1,n}(\Omega).$ In particular
\begin{equation} 
 \label{eq:properties of concentrating sequences}
  u_i\to 0\quad\text{ in }L^q(\Omega)\quad\text{ for all }q<\infty.
\end{equation}
\smallskip

$-$ We define the sets
$$
  W^{1,n}_{0,rad}(B_1)=\left\{u\in W^{1,n}_0(B_1)\,\Big|\, u\text{ is radial }\right\}
$$
and analogously $C_{c,rad}^{\infty}(B_1)$ is the set of radially symmetric smooth functions with compact support in $B_1$.
By abuse of notation we will usually write $u(x)=u(|x|)$ for $u\in W^{1,n}_{0,rad}(B_1).$
The space $C_{c,rad}^{\infty}(B_1)$ is dense in 
$W^{1,n}_{0,rad}(B_1)$ in the $W^{1,n}$ norm. If in addition $u$ is radially decreasing we write $u\in W^{1,n}_{0,rad\searrow}(B_1),$ respectively $u\in C_{c,rad\searrow}^{\infty}(B_1).$
\smallskip

$-$ Define 
$$
  \mathcal{B}_1(\Omega)=\left\{u\in W^{1,n}_0(\Omega)\,\big|\,\|\nabla u\|_{L^n}\leq 1\right\}.
$$

\smallskip 

$-$ Finally we define
$$
F_{\Omega}^{\text{sup}}=\sup_{u\in \mathcal{B}_1(\Omega)}F_{\Omega}(u).
$$
$J_{\Omega}^{\text{sup}}$ is defined in an analogous way, replacing $F$ by $J.$
If $x\in\Omegabar$ and the supremum is taken only over concentrating sequences, we write $F_{\Omega}^{\delta}(x),$ more precisely
$$
  F_{\Omega}^{\delta}(x)=\sup\left\{\limsup_{i\to\infty}F_{\Omega}(u_i)\,\Big|\quad \{u_i\}\subset \mathcal{B}_1(\Omega)\text{ concentrates at } x\right\}.
$$
We define in an analogous way $J_{\Omega}^{\delta}(x).$ If $\Omega=B_1$, then we define
$$
  F_{B_1,rad\searrow}^{\text{sup}}=\sup_{u\in W^{1,n}_{0,rad\searrow}(B_1)\cap \mathcal{B}_1(B_1)}F_{B_1}(u),
$$
$$
  F^{\delta}_{B_1,rad\searrow}(0)=\sup\left\{\limsup_{i\to\infty}F_{B_1}(u_i)\,\Big|\quad \{u_i\}\subset W_{0,rad\searrow}^{1,n}(B_1)\cap \mathcal{B}_1(B_1)\text{ concentrates at } 0\right\}.
$$
We define $J_{B_1,rad\searrow}^{\text{sup}}$ and $J^{\delta}_{B_1,rad\searrow}(0)$ in an analogous way.
\smallskip

$-$ If $\Omega\subset\re^n$ then $\Omega^{\ast}$ is its symmetric rearrangement, that is $\Omega^{\ast}=B_R(0),$ where $|\Omega|=R^n\omega_{n-1}/n .$ If $u\in W_0^{1,n}(\Omega),$ then $u^{\ast}\in W^{1,n}_{0,rad\searrow}(B_R(0))$ will denote the Schwarz symmetrization of $u.$ For basic propertis of the Schwarz symmetrization we refer to Kesavan \cite{Kesavan}, Chapters 1 and 2, which we will use throughout.
In particular we will use frequently and without further comment that if $u\in W_0^{1,2}(\Omega),$ then $u^{\ast}$ satisfies 
$$
  F_{B_R}(u)\leq F_{B_R}(u^{\ast})
  \quad\text{ if }\Omega=B_R
  \quad\text{ and }\quad \|\nabla u^{\ast}\|_{L^n(B_R)}\leq \|\nabla u\|_{L^n(\Omega)}.
$$
We will additionally  need, as in Flucher \cite{Flucher}, a slight modification of the Hardy-Littlewood, respectively P\'olya-Szeg\"o theorem, stated in the next proposition.  

\begin{proposition}
\label{prop:Hardy littlewoood modified}
(i) Lef $f\in L^p(\Omega)$ and $g\in L^q(\Omega),$ where $1/p+1/q=1.$ Then for any $a\in \re$
$$
  \int_{\{f\geq a\}}f\,g\leq \int_{\{f^{\ast}\geq a\}}f^{\ast}g^{\ast}.
$$
\smallskip

(ii) Let $u\in W^{1,n}_0(\Omega)$ such that $u\geq 0.$ Then for any $t\in (0,\infty)$
$$
  \int_{\{u^{\ast}\leq t\}}|\nabla u^{\ast}|^n\leq \int_{\{u\leq t\}}|\nabla u|^n
  \quad\text{ and }\quad \int_{\{u^{\ast}\geq t\}}|\nabla u^{\ast}|^n\leq \int_{\{u\geq t\}}|\nabla u|^n.
$$
\end{proposition}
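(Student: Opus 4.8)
The plan is to derive both parts from the two classical rearrangement inequalities — the Hardy--Littlewood inequality $\int_{\Omega}\phi\psi\leq\int_{\Omega^{\ast}}\phi^{\ast}\psi^{\ast}$ for (i), and the P\'olya--Szeg\"o inequality $\|\nabla\phi^{\ast}\|_{L^n}\leq\|\nabla\phi\|_{L^n}$ for (ii) — by truncating $f$, respectively $u$, at the level $a$, respectively $t$, so that the restriction to a level set is absorbed into a truncated function to which the classical inequality applies directly.

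For (i) one may assume $f,g\geq 0$, which is both the case needed below and the setting in which $f^{\ast}$ is defined. Put $\tilde f=\max(f,a)$; since $\tilde f$ and $\max(f^{\ast},a)$ are equimeasurable and the latter is radial and nonincreasing, $\tilde f^{\ast}=\max(f^{\ast},a)$. From $\int_{\Omega}\tilde f\,g=\int_{\{f\geq a\}}fg+a\int_{\{f<a\}}g$ and $\int_{\Omega^{\ast}}\tilde f^{\ast}g^{\ast}=\int_{\{f^{\ast}\geq a\}}f^{\ast}g^{\ast}+a\int_{\{f^{\ast}<a\}}g^{\ast}$, the Hardy--Littlewood inequality applied to $\tilde f$ and $g$ gives
$$\int_{\{f\geq a\}}fg\ \leq\ \int_{\{f^{\ast}\geq a\}}f^{\ast}g^{\ast}\ +\ a\left(\int_{\{f^{\ast}<a\}}g^{\ast}-\int_{\{f<a\}}g\right).$$
Since $g$ and $g^{\ast}$ are equimeasurable, the bracket equals $\int_{\{f\geq a\}}g-\int_{\{f^{\ast}\geq a\}}g^{\ast}$, which is $\leq 0$ by the Hardy--Littlewood inequality applied to $\mathbf 1_{\{f\geq a\}}$ and $g$; this proves (i) for $a\geq 0$, and for $a\leq 0$ one has $\{f\geq a\}=\Omega$ so the statement is just Hardy--Littlewood.

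For (ii), I would first observe that $\nabla u^{\ast}=0$ a.e.\ on $\{u^{\ast}=t\}$: being radial and nonincreasing, $u^{\ast}$ is constant on the (possibly degenerate) annulus $\{u^{\ast}=t\}$. Hence $\int_{\{u^{\ast}\geq t\}}|\nabla u^{\ast}|^n=\int_{\{u^{\ast}>t\}}|\nabla u^{\ast}|^n$, and likewise for $\leq$ and $<$. Now apply the P\'olya--Szeg\"o inequality to $v:=(u-t)_+$, which lies in $W^{1,n}_0(\Omega)$ because $u\geq 0$ and $t>0$: Schwarz symmetrization commutes with this truncation, so $v^{\ast}=(u^{\ast}-t)_+$, while $\nabla v=\nabla u\,\mathbf 1_{\{u>t\}}$ and $\nabla v^{\ast}=\nabla u^{\ast}\,\mathbf 1_{\{u^{\ast}>t\}}$ a.e.; therefore
$$\int_{\{u^{\ast}\geq t\}}|\nabla u^{\ast}|^n=\int_{\Omega}|\nabla v^{\ast}|^n\leq\int_{\Omega}|\nabla v|^n=\int_{\{u>t\}}|\nabla u|^n\leq\int_{\{u\geq t\}}|\nabla u|^n,$$
which is the second inequality. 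The first inequality follows in the same way from $w:=\min(u,t)\in W^{1,n}_0(\Omega)$, using $w^{\ast}=\min(u^{\ast},t)$ and $\nabla w=\nabla u\,\mathbf 1_{\{u<t\}}$.

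I do not expect a genuine obstacle here; the proposition is, as the text says, a mild variant of standard results. The points deserving a line of care are the passage between strict and non-strict level sets (handled by the vanishing of $\nabla u^{\ast}$ on the level annuli of $u^{\ast}$), the standard facts that Schwarz symmetrization commutes with the truncations $\max(f,a)$, $\min(u,t)$, $(u-t)_+$ and that these truncations remain in $W^{1,n}_0(\Omega)$, and the fact that throughout one works within the class of nonnegative functions, so that all rearrangements are well defined.
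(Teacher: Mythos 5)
The paper does not prove this proposition itself; after stating it, the text only attributes the result to Flucher \cite{Flucher}, so there is no in-paper proof to compare against. Your argument is correct and is the natural way to establish both parts: in (i) you absorb the restriction to $\{f\geq a\}$ into the truncation $\max(f,a)$, apply classical Hardy--Littlewood to that truncation, and dispose of the residual bracket with a second Hardy--Littlewood applied to $\mathbf 1_{\{f\geq a\}}$ and $g$ (using $|\{f\geq a\}|=|\{f^{\ast}\geq a\}|$ to identify $(\mathbf 1_{\{f\geq a\}})^{\ast}$ with $\mathbf 1_{\{f^{\ast}\geq a\}}$); in (ii) you reduce to P\'olya--Szeg\"o applied to the Lipschitz truncations $(u-t)_+$ and $\min(u,t)$, which belong to $W^{1,n}_0(\Omega)$ precisely because $u\ge 0$ and $t>0$, and you handle the discrepancy between strict and non-strict level sets via the a.e.\ vanishing of $\nabla u^{\ast}$ on $\{u^{\ast}=t\}$ (which in fact holds for any Sobolev function, not just the radial rearrangement). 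The only slip is notational: in the display for (ii) you write $\int_{\Omega}|\nabla v^{\ast}|^n$, but $v^{\ast}$ lives on the symmetrized domain, so this should read $\int_{\Omega^{\ast}}|\nabla v^{\ast}|^n$. Otherwise the proof is complete and, if anything, fills a gap the paper leaves to the reader.
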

\smallskip

\section{$n$-Green's function}
\label{section:n greens function}

If $x\in\Omega,$ then $G_{\Omega,x}$ will denote the $n$-Green's function of $\Omega$ with singularity at $x.$ It is the unique function defined on $\Omega\backslash\{x\}$ such that the principal value of the integral
\begin{equation}
 \label{eq:definition G Omega x}
  \intomega |\nabla G_{\Omega,x}(y)|^{n-2}\left\langle \nabla G_{\Omega,x}(y); \nabla\varphi(y)\right\rangle dy=\varphi(x)\quad\text{ for all }\varphi\in C_c^1(\Omega)
\end{equation}
and $G_{\Omega,x}=0$ on $\delomega.$ It can always be decomposed in the form
$$
  G_{\Omega,x}(y)=-\frac{1}{\cnn}\log(|x-y|)-H_{\Omega,x}(y),\qquad y\in\Omega\backslash\{x\},
$$
where $H_{\Omega,x}$ is a continuous  function on $\Omegabar$ and is $C^{1,\alpha}_{loc}$ in $\Omega \setminus \{x\}.$ This result is due to \cite{Kichenassamy Veron} (see therein Theorem stated in  (0.10) and (0.11) or Theorem 2.1 and in particular Remark 1.4). Another useful reference on the $n$-Green's function is \cite{Wang Wei}.

The conformal incenter $I_{\Omega}(x)$ of $\Omega$ at $x$ is defined by
$$
  I_{\Omega}(x)=e^{-\cnn H_{\Omega,x}(x)}.
$$
Before stating the next proposition we need the following definition. 
\smallskip

\begin{definition}
We say that a sequence of sets $\{A_i\}\subset \re^n$ are approximately small balls at $x\in\re^n$ (of radius $\tau_i$) as $i\to\infty$ if there exists  sequences $\tau_i,\sigma_i>0$ such that $\lim_{i\to\infty}\tau_i=0,$
$$
  \lim_{i\to\infty}\frac{\sigma_i}{\tau_i}=0
$$
and 
$$
  B_{\tau_i-\sigma_i}(x)\subset A_i\subset B_{\tau_i+\sigma_i}(x)\quad\text{ for all $i$ big enough.}
$$
\end{definition}

We will need the following properties of the $n$-Green's function. For what follows it is convenient to abbreviate, for $\beta\in [0,n)$
$$
  \alpha_{n,\beta}=(n-\beta)\omega_{n-1}^{1/(n-1)}.
$$

\begin{proposition}
\label{proposition:properties of Green's function} Let $x\in\Omega.$ Then
$G_{\Omega,x}$ and $I_{\Omega}(x)$ have the following properties:

(a) For every $t\in[0,\infty)$
$$
  \int_{\{G_{\Omega,x}<t\}}\left|\nabla G_{\Omega,x}(y)\right|^n dy=t.
$$

(b) For every $t\in [0,\infty)$
$$
  \int_{\{G_{\Omega,x}=t\}}\left|\nabla G_{\Omega,x}(y)\right|^{n-1} \dH(y)=1.
$$

(c)
$$
  \lim_{t\to\infty}\frac{n\,\left|\left\{G_{\Omega,x}>t\right\}\right|}{\omega_{n-1}e^{-n \cnn t}} = \left(I_{\Omega}(x)\right)^n.
$$

(d) If $B_R=\Omega^{\ast}$ is the symmetrized domain, then for any $x\in\Omega$ 
$$
  I_{\Omega}(x)\leq I_{B_R}(0)=R.
$$

(e) If $t_i\geq 0$ is a given sequence such that $t_i\to\infty$, then the sets $\{G_{\Omega,x}>t_i\}$ are approximately small balls at $x$ of radius $\tau_i$
$$
  \tau_i=I_{\Omega}(x)e^{-\cnn t_i}.
$$
In particular
$$
  \lim_{t\to\infty}\frac{n-\beta}{\omega_{n-1}e^{-\alpha_{n,\beta}t}}\int_{\{G_{\Omega,x}>t\}}|y-x|^{-\beta}=I_{\Omega}(x)^{n-\beta}.
$$
\end{proposition}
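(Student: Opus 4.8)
Here is how I would organize the proof. The backbone is part (a); (b) and (d) are deduced from it, while (c) and (e) are read off from the decomposition $G:=G_{\Omega,x}(y)=-\frac1{\cnn}\log|x-y|-H(y)$, with $H:=H_{\Omega,x}$ continuous at $x$. I will use throughout that $G$ is positive and $n$-harmonic in $\Omega\setminus\{x\}$, is $C^{1,\alpha}_{loc}$ there (and up to $\delomega$, where it vanishes), and $G(y)\to+\infty$ as $y\to x$; in particular each sublevel set $\{G<t\}$ ($t>0$) stays a positive distance from $x$, so $|\nabla G|$ is bounded on it. For (a) the plan is first to extend \eqref{eq:definition G Omega x} so that it may be tested against any $\varphi\in W^{1,n}_0(\Omega)$ that is constant in a neighbourhood of $x$: for such $\varphi$ the integral reduces to one over $\Omega\setminus B_\rho(x)$ for small $\rho$, where $G$ is smooth and $\operatorname{div}(|\nabla G|^{n-2}\nabla G)=0$, and integrating by parts leaves only the flux of $|\nabla G|^{n-2}\nabla G$ through $\partial B_\rho(x)$, which by the decomposition of $G$ and the regularity of $H$ near $x$ converges to $1$ as $\rho\to0$ (this is the flux carried by the fundamental solution $-\frac1{\cnn}\log|x-y|$), thereby reproducing $\varphi(x)$. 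Then I would take $\varphi=\min(G,t)$: it lies in $W^{1,n}_0(\Omega)$ since $\nabla\varphi=\nabla G\,\mathbf{1}_{\{G<t\}}$ with $|\nabla G|$ bounded there, and it equals $t$ near $x$, so since $G>0$ the extended identity gives $\int_{\{G<t\}}|\nabla G|^n=\varphi(x)=t$; the case $t=0$ is trivial.

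Part (b) follows from (a) via the coarea formula applied with $g=|\nabla G|^{n-1}$: one gets $t=\int_{\{G<t\}}|\nabla G|^n=\int_0^t\big(\int_{\{G=s\}}|\nabla G|^{n-1}\dH\big)\,ds$ for all $t$, whence the inner integral is $1$ for a.e.\ $t$, and for every $t$ by continuity in $t$ (equivalently, the flux $\int_{\{G=t\}}|\nabla G|^{n-2}\partial_\nu G\,\dH$ is $t$-independent by the divergence theorem on $\{s<G<t\}$ and equals $1$ by the small-sphere computation of (a)). For (c) and the first assertion of (e), the decomposition gives $G(y)>t\iff|x-y|<e^{-\cnn(t+H(y))}$; fixing $\varepsilon>0$ and using that $H$ is continuous at $x$ while $\{G>t\}$ shrinks to $\{x\}$, one obtains, for $t$ large,
$$
  B_{\tau e^{-\cnn\varepsilon}}(x)\ \subset\ \{G>t\}\ \subset\ B_{\tau e^{\cnn\varepsilon}}(x),\qquad \tau=\tau(t):=I_{\Omega}(x)\,e^{-\cnn t}.
$$
Choosing $\varepsilon=\varepsilon_i\to0$ slowly along any given $t_i\to\infty$ shows $\{G>t_i\}$ is pinched between $B_{\tau_i-\sigma_i}(x)$ and $B_{\tau_i+\sigma_i}(x)$ with $\sigma_i/\tau_i\to0$, i.e.\ these sets are approximately small balls at $x$ of radius $\tau_i=I_\Omega(x)e^{-\cnn t_i}$.

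Integrating the weights $1$ and $|y-x|^{-\beta}$ over the two pinching balls bounds $|\{G>t\}|$, resp.\ $\int_{\{G>t\}}|y-x|^{-\beta}\,dy$, between $\frac{\omega_{n-1}}{n}(\tau e^{\mp\cnn\varepsilon})^{n}$, resp.\ $\frac{\omega_{n-1}}{n-\beta}(\tau e^{\mp\cnn\varepsilon})^{n-\beta}$ (the latter converges since $\beta<n$); dividing by $\frac{\omega_{n-1}}{n}e^{-n\cnn t}$, resp.\ using $(n-\beta)\cnn=\alpha_{n,\beta}$ and $\tau^{n-\beta}=I_\Omega(x)^{n-\beta}e^{-\alpha_{n,\beta}t}$, and then letting $t\to\infty$ followed by $\varepsilon\to0$, gives (c) and the remaining limit in (e). For (d): first $I_{B_R}(0)=R$ because $G_{B_R,0}(y)=-\frac1{\cnn}\log(|y|/R)$, so $H_{B_R,0}\equiv-\frac1{\cnn}\log R$. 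For the inequality, set $u_t=\min(G,t)$, so $\|\nabla u_t\|_{L^n(\Omega)}^n=t$ by (a); with $B_R=\Omega^{\ast}$ let $u_t^{\ast}\in W^{1,n}_{0,rad\searrow}(B_R)$ be the Schwarz symmetrization of $u_t$, which equals $t$ on $B_{r_t}$ (with $\frac{\omega_{n-1}}{n}r_t^n=|\{G>t\}|$) and vanishes on $\partial B_R$. By P\'olya--Szeg\H{o}, $t\ge\|\nabla u_t^{\ast}\|_{L^n}^n$, and this is at least the $n$-capacity $\omega_{n-1}t^n/(\log(R/r_t))^{n-1}$ of the spherical condenser, so $\log(R/r_t)\ge\cnn t$, i.e.\ $n|\{G>t\}|/(\omega_{n-1}e^{-n\cnn t})\le R^n$; letting $t\to\infty$ and invoking (c) yields $I_\Omega(x)^n\le R^n$.

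The step I expect to be most delicate is the analytic bookkeeping near the singularity underlying (a) and (b): making the integration by parts on $\Omega\setminus B_\rho(x)$ and the limiting flux through small spheres rigorous needs the precise regularity of $G$ and of its regular part $H$ up to $x$ supplied by \cite{Kichenassamy Veron} --- in particular enough control on $\nabla H$ near $x$ to discard the lower-order terms --- together with the justification that \eqref{eq:definition G Omega x} extends to the (non-$C^1_c$, but locally constant near $x$) test functions used; the passage from a.e.\ $t$ to every $t$ in (b) is a further minor technical point. By contrast (c), (e), and the symmetrization step in (d) are routine once the decomposition of $G$ and part (a) are in hand.
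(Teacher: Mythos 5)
Your parts (a), (b), (c) and (e) follow essentially the same route as the paper: extend the weak identity \eqref{eq:definition G Omega x} to test functions that are constant near $x$ (the paper phrases this as "holds for any $\varphi\in W_0^{1,\infty}(\Omega)$") and plug in $\min\{t,G\}$ for (a); use the coarea formula and differentiate for (b); and obtain (c) and (e) by pinching $\{G>t\}$ between two balls using the decomposition $G=-\frac1{\cnn}\log|\cdot - x|-H$ and the continuity of $H$ at $x$ (the paper does this with its explicit $P(t),Q(t),m(t)$ machinery; your $\varepsilon$-formulation plus a diagonal choice $\varepsilon_i\to0$ is an equivalent bookkeeping). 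Your worry about passing from "a.e.\ $t$" to "every $t$" in (b) is legitimate and you correctly note how to fix it by the divergence theorem on $\{s<G<t\}$; the paper elides this step.

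The one genuine divergence is part (d). The paper simply cites Flucher \cite{Flucher book} Lemma~8.2 and Lin \cite{Lin K C} Lemma~2, whereas you supply a complete self-contained proof: from (a), the truncation $u_t=\min\{G,t\}$ has $\|\nabla u_t\|_{L^n}^n=t$, and P\'olya--Szeg\H o together with the explicit value $\omega_{n-1}t^n/(\log(R/r_t))^{n-1}$ of the $n$-capacity of the spherical condenser $\{v=t\text{ on }B_{r_t},\,v=0\text{ on }\partial B_R\}$ forces $\log(R/r_t)\ge\cnn t$, i.e.\ $n|\{G>t\}|/(\omega_{n-1}e^{-n\cnn t})\le R^n$; letting $t\to\infty$ and invoking (c) gives $I_\Omega(x)\le R=I_{B_R}(0)$. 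This argument is correct (up to the innocuous replacement of $|\{G>t\}|$ by $|\{G\ge t\}|$ when describing where $u_t^*$ attains the value $t$, which does not affect the limit) and is in fact precisely the "capacity comparison after symmetrization" idea that underlies the cited lemmas, so you have not so much chosen a different method as unpacked the reference. The trade-off is a self-contained and slightly longer proof versus a citation; either is acceptable here, and your version has the advantage of making the paper independent of Flucher's monograph for this particular inequality.
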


\begin{proof} In all statements we can assume without loss of generality that $x=0\in\Omega$ and abbreviate $G_{\Omega,x}=G,$ $H_{\Omega,x}=H.$
\smallskip

(a) By approximation \eqref{eq:definition G Omega x} holds also for any $\varphi\in W_0^{1,\infty}(\Omega).$ Thus (a) follows by taking $\varphi(y)=\min\{t,G(y)\}.$
\smallskip

(b) Observe that by (a) and the coarea formula
$$
  t=\int_{\{G<t\}}\left|\nabla G(y)\right|^{n-1} \left|\nabla G(y)\right| dy
  =
  \int_0^t\left(\int_{\{G=s\}}\left|\nabla G(y)\right|^{n-1}\dH(y)\right)ds.
$$
Hence, (b) follows from (a) by derivation.
\smallskip

(c) \textit{Step 1.} Write $G$ as
\begin{equation}
 \label{eq:G with H0 minus Hy}
  G(y)=-\frac{1}{\omega_{n-1}^{1/(n-1)}}\log(|y|)-H(0)+H(0)-H(y)
  =-\frac{1}{\omega_{n-1}^{1/(n-1)}}\log\left(\frac{|y|}{I_{\Omega}(0)}\right)+H(0)-H(y)
\end{equation}
Let $\|H\|_{\infty}=\sup \{|H(y)|:\, y\in\Omega\}$ and define
\begin{equation}
 \label{eq:def:S(t)}
  S(t)=\left\{y\in\Omega:\, |y|\leq I_{\Omega}(0) e^{-\omega_{n-1}^{1/(n-1)}(t-2\|H\|_{\infty})}\right\},
\end{equation}
and set
\begin{equation}
 \label{eq:def:m(t)}
  m(t)=\max_{y\in S(t)}|H(y)-H(0)|
\end{equation}
By the continuity of $H$ it holds that
\begin{equation}
 \label{eq:m(t) goes to 0}
  \lim_{t\to\infty}m(t)=0.
\end{equation}
We now define also the sets
\begin{align*}
 P(t)=& \left\{y\in\Omega:\, |y|<I_{\Omega}(0)e^{-\omega_{n-1}^{1/(n-1)}(t+m(t))}\right\}
 \smallskip
 \\
 Q(t)=& \left\{y\in\Omega:\, |y|<I_{\Omega}(0)e^{-\omega_{n-1}^{1/(n-1)}(t-m(t))}\right\},
\end{align*}
and claim that for all $t\geq 2\|H\|_{\infty}$
\begin{equation}
 \label{eq:P subset G geq t subset Q}
  P(t)\subset \{G>t\}\subset Q(t).
\end{equation}
Let $y\in P(t),$ then also $y\in S(t).$  So using \eqref{eq:G with H0 minus Hy}, \eqref{eq:def:m(t)} and finally the definition of $P(t)$ we get
\begin{align*}
  G(y)\geq & -\frac{1}{\omega_{n-1}^{1/(n-1)}}\log\left(\frac{|y|}{I_{\Omega}(0)}\right)-m(t)
  = -\frac{1}{\omega_{n-1}^{1/(n-1)}}\log\left(\frac{|y|}{I_{\Omega}(0)}\right)-\left(m(t)+t\right)+t > t.
\end{align*}
This shows $P(t)\subset\{G>t\}.$ If $y\in \{G>t\},$ then using again \eqref{eq:G with H0 minus Hy} we obtain
$$
  |y|<I_{\Omega}(0)e^{-\omega_{n-1}^{1/(n-1)}(t-H(0)+H(y))}.
$$
As $t-H(0)+H(y)\geq t-2\|H\|_{\infty}$ it holds that $y\in S(t)$ and hence $|H(0)-H(y)|\leq m(t).$ This implies that $y\in Q(t)$ and proves the claim \eqref{eq:P subset G geq t subset Q}. It now follows from \eqref{eq:P subset G geq t subset Q} that
\begin{align*}
  \frac{\omega_{n-1}}{n}I_{\Omega}(0)^ne^{-\omega_{n-1}^{1/(n-1)}(t+m(t))n}\leq |\{G>t\}|
  \leq \frac{\omega_{n-1}}{n}I_{\Omega}(0)^ne^{-\omega_{n-1}^{1/(n-1)}(t-m(t))n}
\end{align*}
Using \eqref{eq:m(t) goes to 0} proves (c).

\smallskip

(d) We refer to Flucher \cite{Flucher book} Lemma 8.2 page 64, or Lin \cite{Lin K C} Lemma 2.
\smallskip

(e) is deduced from \eqref{eq:P subset G geq t subset Q} in the following way: write 
$$
  I_{\Omega}(0)e^{-\omega_{n-1}^{1/(n-1)}(t+m(t))}=\tau(t)-\underline{\sigma}(t),\qquad  I_{\Omega}(0)e^{-\omega_{n-1}^{1/(n-1)}(t-m(t))}=\tau(t)+\overline{\sigma}(t),
$$
where
\begin{align*}
  &\tau(t)=I_{\Omega}(0)e^{-\omega_{n-1}^{1/(n-1)}t},
  \qquad 
  \underline{\sigma}(t)=I_{\Omega}(0)e^{-\omega_{n-1}^{1/(n-1)}t}
  \left(1-e^{-\omega_{n-1}^{1/(n-1)}m(t)}\right)
  \smallskip \\
  &\text{and }\quad 
  \overline{\sigma}(t)=I_{\Omega}(0)e^{-\omega_{n-1}^{1/(n-1)}t}
  \left(e^{\omega_{n-1}^{1/(n-1)}m(t)}-1\right)
\end{align*}
Using \eqref{eq:m(t) goes to 0}
$$
  \lim_{t\to\infty}\frac{\underline{\sigma}(t)}{\tau(t)}=\lim_{t\to\infty}\frac{\overline{\sigma}(t)}{\tau(t)}=0,
$$
and the first statement follows by setting $\tau_i=\tau(t_i),$ $\sigma_i=\max\{\underline{\sigma}(t_i),\overline{\sigma}(t_i)\}.$ The second statment follows from 
$$
  \int_{B_{\tau(t)-\sigma(t)}}|y|^{-\beta}dy\leq \int_{\{G>t\}}|y|^{-\beta}dy
  \leq 
  \int_{B_{\tau(t)+\sigma(t)}}|y|^{-\beta}dy,
$$
calculating explicitly the first and last integral, and using the first statement of (e).
\end{proof} 
\smallskip


We will need the following result.

\begin{lemma}\label{Volumeradii}
Let $\Omega$ be any smooth bounded domain of $\mathbb R^n$ and $x\in \Omega$. Suppose $\beta \in [0,n)$, then it holds
\begin{equation}\label{eq:Volumeradii}
\int_\Omega |y-x|^{-\beta} dy \geq \frac{\omega_{n-1}}{n-\beta} I_\Omega(x)^{n-\beta}.
\end{equation}
\end{lemma}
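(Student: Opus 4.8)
The plan is to reduce the inequality to a comparison between the super-level sets $\{G_{\Omega,x}>t\}$ (which become approximately small balls by Proposition~\ref{proposition:properties of Green's function}(e)) and their own symmetric rearrangements, exploiting that balls centered at the origin are optimal for the weighted volume functional $\int|y-x|^{-\beta}\,dy$. Without loss of generality take $x=0$. First I would introduce the distribution function $\mu(t)=\int_{\{G>t\}}|y|^{-\beta}\,dy$, which is finite for $\beta<n$ since $G\to\infty$ only logarithmically near $0$ (so the super-level sets shrink like small balls and the singularity $|y|^{-\beta}$ is integrable there). The idea is to differentiate $\mu$ and relate $\mu'(t)$ to a boundary integral over $\{G=t\}$ via the coarea formula, then apply a suitable weighted isoperimetric comparison and the normalization $\int_{\{G=t\}}|\nabla G|^{n-1}\,d\mathcal H^{n-1}=1$ from part (b).

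Concretely, by the coarea formula $\mu(t)=\int_t^\infty\big(\int_{\{G=s\}}|y|^{-\beta}|\nabla G|^{-1}\,d\mathcal H^{n-1}(y)\big)\,ds$, so $-\mu'(t)=\int_{\{G=t\}}|y|^{-\beta}|\nabla G|^{-1}\,d\mathcal H^{n-1}$ for a.e.\ $t$. I would then estimate this from below. One clean route: compare $\{G>t\}$ against the ball $B_{\rho(t)}$ with the same weighted volume, i.e.\ $\int_{B_{\rho(t)}}|y|^{-\beta}\,dy=\mu(t)$, so that $\mu(t)=\frac{\omega_{n-1}}{n-\beta}\rho(t)^{n-\beta}$; the target inequality \eqref{eq:Volumeradii} is then exactly the assertion $\liminf_{t\to\infty}\rho(t)\ge$ (something) — but more directly, since $\mu$ is nonincreasing and $\mu(t)\to 0$, one has $\mu(t)=\int_t^\infty(-\mu'(s))\,ds$, so it suffices to bound $-\mu'(s)$ below by the corresponding quantity for the model ball and integrate. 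Using the weighted isoperimetric inequality of Alvino--Brock--Chiacchio--Mercaldo--Posteraro quoted in the introduction (with weight exponent matched to $\beta$) together with H\"older's inequality against $|\nabla G|^{n-1}$ and part (b), one gets $-\mu'(t)\ge c\,\mu(t)^{(n-1)/n}\cdot(\text{normalization})$, a differential inequality for $\mu$ whose integration yields the desired lower bound with the sharp constant, the constant being pinned down by the equality case for balls centered at the origin.

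Alternatively — and this is probably the shortest path given what is already proved — I would avoid the isoperimetric machinery entirely and argue by a scaling/limiting comparison: for each $t$, Proposition~\ref{proposition:properties of Green's function}(e) gives $\int_{\{G>t\}}|y|^{-\beta}\,dy=\frac{\omega_{n-1}}{n-\beta}I_\Omega(0)^{n-\beta}e^{-\alpha_{n,\beta}t}(1+o(1))$ as $t\to\infty$. So the plan is to show the map $t\mapsto e^{\alpha_{n,\beta}t}\int_{\{G>t\}}|y|^{-\beta}\,dy$ is nonincreasing (or at least that its value at $t=0$ dominates its limit), which combined with $\{G>0\}=\Omega$ gives $\int_\Omega|y|^{-\beta}\,dy\ge\lim_{t\to\infty}e^{\alpha_{n,\beta}t}\int_{\{G>t\}}|y|^{-\beta}\,dy=\frac{\omega_{n-1}}{n-\beta}I_\Omega(0)^{n-\beta}$. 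Monotonicity would follow by differentiating: $\frac{d}{dt}\big(e^{\alpha_{n,\beta}t}\mu(t)\big)=e^{\alpha_{n,\beta}t}(\alpha_{n,\beta}\mu(t)+\mu'(t))$, and one needs $-\mu'(t)\le\alpha_{n,\beta}\mu(t)$, i.e.\ $\int_{\{G=t\}}|y|^{-\beta}|\nabla G|^{-1}\,d\mathcal H^{n-1}\le(n-\beta)\omega_{n-1}^{1/(n-1)}\int_{\{G>t\}}|y|^{-\beta}\,dy$.

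\textbf{Main obstacle.} The crux is establishing the pointwise-in-$t$ inequality relating the boundary weighted integral $\int_{\{G=t\}}|y|^{-\beta}|\nabla G|^{-1}\,d\mathcal H^{n-1}$ to the solid weighted integral $\int_{\{G>t\}}|y|^{-\beta}\,dy$ with the \emph{sharp} constant. The difficulty is exactly the one flagged in the introduction: the naive weighted isoperimetric inequality fails in dimension $n\ge 3$ for domains (here, super-level sets) not centered at the origin, so one cannot simply rearrange $\{G>t\}$ into a concentric ball. This is why the argument must lean on the Alvino et al.\ inequality (whose weight structure $|y|^{-\beta}$ versus $|y|^{-\frac{n-1}{n}\beta}$ on the boundary is designed precisely to survive off-center configurations) combined with a H\"older step that reintroduces $|\nabla G|$, and on the precise asymptotic geometry of $\{G>t\}$ from part~(e) to handle the regime where the super-level set genuinely looks like an off-center small ball. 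Controlling the error terms uniformly — rather than only asymptotically — and checking that the constant coming out of the combined Alvino--H\"older--part(b) chain is exactly $\frac{\omega_{n-1}}{n-\beta}$ (sharp, with equality for origin-centered balls) is where the real work lies; everything else is the coarea formula and an ODE comparison.
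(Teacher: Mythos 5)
Your ``alternative'' route is essentially the paper's proof verbatim: set $\mu(t)=\int_{\{G>t\}}|y|^{-\beta}\,dy$, apply the Alvino et al.\ weighted isoperimetric inequality to the super-level set $\{G>t\}$, H\"older against $|\nabla G|^{n-1}$, use Proposition~\ref{proposition:properties of Green's function}(b) to cancel the normalization, conclude that $e^{\alpha_{n,\beta}t}\mu(t)$ is monotone, and evaluate at $t=0$ against the $t\to\infty$ limit supplied by part~(e). The ingredients are all correct and the paper does exactly this.

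However, you have a sign error in the differential inequality, and it is precisely the sign that makes the argument go. You write that one needs $-\mu'(t)\le\alpha_{n,\beta}\mu(t)$ to get $e^{\alpha_{n,\beta}t}\mu(t)$ nonincreasing; but $\frac{d}{dt}\bigl(e^{\alpha_{n,\beta}t}\mu(t)\bigr)=e^{\alpha_{n,\beta}t}\bigl(\alpha_{n,\beta}\mu(t)+\mu'(t)\bigr)\le 0$ requires $-\mu'(t)\ge\alpha_{n,\beta}\mu(t)$, i.e.\ $\int_{\{G=t\}}|y|^{-\beta}|\nabla G|^{-1}\,d\mathcal H^{n-1}\ge\alpha_{n,\beta}\int_{\{G>t\}}|y|^{-\beta}\,dy$. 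The direction you wrote would make $e^{\alpha_{n,\beta}t}\mu(t)$ nondecreasing and yield $\int_\Omega|y|^{-\beta}\le\frac{\omega_{n-1}}{n-\beta}I_\Omega(0)^{n-\beta}$, the reverse of the claim. Fortunately, the Alvino--H\"older--part(b) chain delivers exactly the correct direction: Alvino gives $\mu(t)\le\alpha_{n,\beta}^{-1}\bigl(\int_{\{G=t\}}|y|^{-(n-1)\beta/n}\bigr)^{n/(n-1)}$, and H\"older with the split $|y|^{-(n-1)\beta/n}=|\nabla G|^{(n-1)/n}\cdot|y|^{-(n-1)\beta/n}|\nabla G|^{-(n-1)/n}$ bounds the boundary integral by $1^{1/n}\cdot(-\mu'(t))^{(n-1)/n}$, so $\mu(t)\le\alpha_{n,\beta}^{-1}(-\mu'(t))$. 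This also corrects the power $\mu(t)^{(n-1)/n}$ appearing in your first route: after the H\"older step the resulting differential inequality is linear in $\mu$, not sublinear, and the constant $\alpha_{n,\beta}$ appears directly, with no separate ``error control'' needed -- Alvino's inequality is applied exactly to each level set, not asymptotically.

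Finally, a minor point: your ``main obstacle'' paragraph worries about ``controlling the error terms uniformly rather than only asymptotically,'' but there are no error terms to control; part~(e) is used only once, to compute the $t\to\infty$ limit of the monotone quantity, and Alvino's inequality and the normalization from (b) hold exactly at every $t$.
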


\begin{proof}
It is enough to prove \eqref{eq:Volumeradii} for $x =0$ and assume $0\in\Omega.$ We start the proof by recalling a sharp weighted isoperimetric inequality from \cite{Alvino} Theorem $1.1$ (ii).
\begin{equation}
 \label{eq:weightediso}
   \int_A |y|^{-\beta} dy \leq \frac1{\alpha_{n,\beta}} \left(\int_{\partial A} |y|^{-\frac{n-1}n\beta} d\mathcal{H}^{n-1}(y)\right)^{\frac{n}{n-1}}
   \quad\text{ for any smooth set $A\subset\re^n.$}
\end{equation}
Applying \eqref{eq:weightediso} to the set $\{G_{\Omega,0} >t\}$ and using H\"older inequality and Proposition \ref{proposition:properties of Green's function} (b), we have
\begin{align}
 \label{eq:VR1}
  \int_{\{G_{\Omega,0} >t\}} |y|^{-\beta} dy &\leq \frac1{\alpha_{n,\beta}} \left(\int_{\{G_{\Omega,0} =t\}} |\nabla G_{\Omega,0}(y)|^{\frac{n-1}n} \frac{|y|^{-\frac{n-1}n \beta}}{|\nabla G_{\Omega,0}(y)|^{\frac{n-1}n}} \dH(y)\right)^{\frac n{n-1}}
  \notag\\
  &\leq \frac1{\alpha_{n,\beta}} \left(\int_{\{G_{\Omega,0} =t\}} |\nabla G_{\Omega,0}(y)|^{n-1} \dH(y)\right)^{\frac1{n-1}} \left(\int_{\{G_{\Omega,0} =t\}} \frac{|y|^{-\beta}}{|\nabla G_{\Omega,0}(y)|} \dH(y)\right)
  \notag\\
  &= \frac1{\alpha_{n,\beta}} \int_{\{G_{\Omega,0} =t\}} \frac{|y|^{-\beta}}{|\nabla G_{\Omega,0}(y)|} \dH(y).
\end{align}
By co-area formula, we have
\[
\int_{\{G_{\Omega,0} >t\}} |y|^{-\beta} dy = \int_t^\infty \int_{\{G_{\Omega,0} =s\}} \frac{|y|^{-\beta}}{|\nabla G_{\Omega,0}(y)|} \dH(y)\, ds.
\]
Whence \eqref{eq:VR1} can be rewritten as
\[
\frac{d}{dt} \left(e^{\alpha_{n,\beta} t} \int_{\{G_{\Omega,0} >t\}} |y|^{-\beta} dy\right) \leq 0.
\]
In other words, $t \to e^{\alpha_{n,\beta} t} \int_{\{G_{\Omega,0} >t\}} |y|^{-\beta} dy$ is a non-increasing function. Using Proposition \ref{proposition:properties of Green's function} (e) we get
\[
\int_\Omega |y|^{-\beta} dy \geq \lim_{t\to \infty} e^{\alpha_{n,\beta} t} \int_{\{G_{\Omega,0} >t\}} |y|^{-\beta} dx = \frac{\omega_{n-1}}{n-\beta} I_\Omega(0)^{n-\beta},
\]
as wanted.
\end{proof}

The following proposition and its corollary are the main results of this section. We have included also the case $\beta=n,$ although this is not needed for the application to the singular Moser-Trudinger functional.

\begin{proposition}
\label{prop:Hypot H for r equal 1}
Let $\Omega\subset\re^n$ be any smooth bounded set and $x\in\Omega.$ Suppose $\beta\in [0,n],$ then it holds that
$$
  \omega_{n-1}^{n/(n-1)} I_{\Omega}(x)^{n-\beta}\leq \int_{\partial\Omega}\frac{|x-y|^{-\beta}}{\left|\nabla G_{\Omega,x}(y)\right|}\dH(y).
$$
\end{proposition}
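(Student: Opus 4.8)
The plan is to run the same differential-inequality-and-limit argument that proves Lemma~\ref{Volumeradii}, but now applied to the boundary integral rather than the volume integral, and then to combine the two. As before we may take $x=0\in\Omega$ and write $G=G_{\Omega,0}$. For $t\ge 0$ set
$$
  \Phi(t)=\int_{\{G=t\}}\frac{|y|^{-\beta}}{|\nabla G(y)|}\dH(y),\qquad
  \Psi(t)=\int_{\{G>t\}}|y|^{-\beta}\,dy,
$$
so that by the coarea formula $\Psi(t)=\int_t^\infty\Phi(s)\,ds$ and $-\Psi'(t)=\Phi(t)$. The starting point is exactly inequality \eqref{eq:VR1} from the proof of Lemma~\ref{Volumeradii}, namely $\alpha_{n,\beta}\Psi(t)\le\Phi(t)$, i.e. $-\alpha_{n,\beta}\Psi(t)\le\Psi'(t)$, which says that $t\mapsto e^{\alpha_{n,\beta}t}\Psi(t)$ is non-increasing. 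Note that for the range $\beta\in[0,n]$ one must check that the Alvino--Brock--Chiacchio--Mercaldo--Posteraro inequality \eqref{eq:weightediso} and Proposition~\ref{proposition:properties of Green's function}(e) still apply (they are stated for $\beta\in[0,n)$; the endpoint $\beta=n$ should be handled by a limiting argument or by the observation that both sides of the claimed inequality degenerate appropriately).

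Next I would reuse the same Hölder splitting at the level of the boundary integrand itself, but with a different exponent, to relate $\Phi(t)$ to the quantity we want. Write, on the level set $\{G=t\}$,
$$
  |y|^{-\beta}=\bigl(|y|^{-\frac{n-1}{n}\beta}\bigr)\cdot\Bigl(|\nabla G|^{\frac{1}{n}}\Bigr)\cdot\Bigl(|\nabla G|^{-\frac{1}{n}}|y|^{-\frac{\beta}{n}}\Bigr),
$$
and apply Hölder together with Proposition~\ref{proposition:properties of Green's function}(b) (the fact that $\int_{\{G=t\}}|\nabla G|^{n-1}\dH=1$) to get a lower bound for $\int_{\{G=t\}}|y|^{-\frac{n-1}{n}\beta}\dH$ in terms of $\Phi(t)$. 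Combined with \eqref{eq:weightediso} applied to $\{G>t\}$ this yields the pointwise-in-$t$ chain
$$
  \alpha_{n,\beta}\Psi(t)\ \le\ \Bigl(\int_{\{G=t\}}|y|^{-\frac{n-1}{n}\beta}\dH\Bigr)^{\frac{n}{n-1}}\Bigl(\text{something}\Bigr)\ \le\ \cdots\ \le\ \Phi(t).
$$
The idea is then to evaluate the monotone quantity $e^{\alpha_{n,\beta}t}\Psi(t)$ at $t=0$: since it is non-increasing, $e^{0}\Psi(0)=\int_\Omega|y|^{-\beta}\,dy\ge\lim_{t\to\infty}e^{\alpha_{n,\beta}t}\Psi(t)=\frac{\omega_{n-1}}{n-\beta}I_\Omega(0)^{n-\beta}$ by Proposition~\ref{proposition:properties of Green's function}(e). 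That recovers Lemma~\ref{Volumeradii}; to get the stated Proposition I instead need to integrate $\Phi$ against a weight that produces the boundary integral $\int_{\partial\Omega}|y|^{-\beta}|\nabla G|^{-1}\dH=\Phi(0)$ on the left, and to produce the extra factor $\omega_{n-1}^{1/(n-1)}$ (turning $\alpha_{n,\beta}=(n-\beta)\omega_{n-1}^{1/(n-1)}$ into $\omega_{n-1}^{n/(n-1)}$) out of the isoperimetric normalization.

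Concretely, the cleanest route is: from $\alpha_{n,\beta}\Psi(t)\le\Phi(t)=-\Psi'(t)$ we get $\Psi(t)\le\Psi(0)e^{-\alpha_{n,\beta}t}$ for all $t\ge0$; feeding this back into \eqref{eq:VR1}/\eqref{eq:weightediso} and using that $\Phi(0)=\int_{\partial\Omega}|y|^{-\beta}|\nabla G|^{-1}\dH$ bounds $\Phi$ near $t=0$ in a uniform way, together with the asymptotics $\Psi(t)\sim\frac{\omega_{n-1}}{n-\beta}I_\Omega(0)^{n-\beta}e^{-\alpha_{n,\beta}t}$ from Proposition~\ref{proposition:properties of Green's function}(e), lets one compare the two exponentially-decaying quantities $\Psi(t)$ and its asymptotic model. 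Since $e^{\alpha_{n,\beta}t}\Psi(t)$ decreases from $\Phi(0)/\alpha_{n,\beta}$-type data down to $\frac{\omega_{n-1}}{n-\beta}I_\Omega(0)^{n-\beta}$, evaluating the monotonicity together with the Hölder lower bound at $t=0$ (where the level set $\{G=0\}$ is exactly $\partial\Omega$) should give
$$
  \frac{\omega_{n-1}}{n-\beta}I_\Omega(0)^{n-\beta}\ \le\ \frac{1}{\alpha_{n,\beta}}\,\Phi(0)=\frac{1}{(n-\beta)\omega_{n-1}^{1/(n-1)}}\int_{\partial\Omega}\frac{|y|^{-\beta}}{|\nabla G|}\dH,
$$
which rearranges to exactly $\omega_{n-1}^{n/(n-1)}I_\Omega(0)^{n-\beta}\le\int_{\partial\Omega}|y|^{-\beta}|\nabla G|^{-1}\dH$.

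I expect the main obstacle to be making the limit-to-$t=0$ step rigorous: the monotonicity of $e^{\alpha_{n,\beta}t}\Psi(t)$ gives the bound $\Psi(0)\ge\frac{\omega_{n-1}}{n-\beta}I_\Omega(0)^{n-\beta}$ for free (that is Lemma~\ref{Volumeradii}), but upgrading $\Psi(0)$ to the boundary integral $\Phi(0)$ requires the sharp Hölder/isoperimetric inequality to be used \emph{at} $t=0$ on the set $\{G>0\}=\Omega$ with $\{G=0\}=\partial\Omega$, i.e. one really just applies \eqref{eq:weightediso} to $\Omega$ itself and the Hölder splitting with $\int_{\partial\Omega}|\nabla G|^{n-1}\dH=1$, getting
$$
  \int_\Omega|y|^{-\beta}\,dy\ \le\ \frac{1}{\alpha_{n,\beta}}\,\int_{\partial\Omega}\frac{|y|^{-\beta}}{|\nabla G|}\dH,
$$
and then chaining with Lemma~\ref{Volumeradii}. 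So in fact the Proposition is immediate from Lemma~\ref{Volumeradii} plus one application of \eqref{eq:VR1} at $t=0$, and the only genuinely delicate points are (i) justifying \eqref{eq:weightediso} and Proposition~\ref{proposition:properties of Green's function}(b),(e) at the endpoint $\beta=n$, and (ii) checking that the boundary integrals are finite and the coarea/Hölder manipulations are legitimate given that $|\nabla G|$ may vanish or blow up and $|y|^{-\beta}$ is singular at the origin; these integrability issues, handled via the regularity of $H_{\Omega,0}$ and the strict positivity of $|\nabla G|$ on $\partial\Omega$ for smooth $\Omega$, are where the real work lies.
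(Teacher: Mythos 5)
Your final paragraph lands on exactly the paper's argument: apply inequality \eqref{eq:VR1} at $t=0$ (i.e. the Alvino--Brock--Chiacchio--Mercaldo--Posteraro isoperimetric inequality plus H\"older on $\partial\Omega$ using Proposition \ref{proposition:properties of Green's function}(b)), chain with Lemma \ref{Volumeradii}, and note that $\alpha_{n,\beta}\cdot\frac{\omega_{n-1}}{n-\beta}=\omega_{n-1}^{n/(n-1)}$, with the endpoint $\beta=n$ by a continuity argument. The lengthy preliminary detour through the differential inequality for $e^{\alpha_{n,\beta}t}\Psi(t)$ is unnecessary here (it is just a recapitulation of Lemma \ref{Volumeradii}'s proof), but the proof you ultimately write down is correct and coincides with the paper's.
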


\begin{proof} 
Let us first assume that $\beta\in [0,n).$ We can assume without loss of generality that $x=0\in\Omega$ and write again $G=G_{\Omega,x}.$ 
Applying \eqref{eq:VR1} to $t = 0$, we have
$$
  \int_{\Omega}|y|^{-\beta}dy\leq \frac{1}{\alpha_{n,\beta}}\intdelomega \frac{|y|^{-\beta}}{|\nabla G(y)|}\dH(y).
$$
It then follows from \eqref{eq:Volumeradii} that
$$
  \omega_{n-1}^{n/(n-1)}I_{\Omega}(0)^{n-\beta}=\alpha_{n,\beta}\frac{\omega_{n-1}}{n-\beta}I_{\Omega}(0)^{n-\beta}
  \leq
  \int_{\partial\Omega}\frac{|y|^{-\beta}}{\left|\nabla G_{\Omega,x}(y)\right|}\dH(y),
$$
which proves the proposition in the present case. The case $\beta=n$ is deduced by a continuity argument from the case $\beta<n.$
\end{proof}
\smallskip

For our application in Section \ref{section:ball to domain} the following extension to the level sets of $G_{\Omega,x}$ is crucial. This corollary generalizes Lemma 3 in Lin \cite{Lin K C} to the singular case $\beta\neq 0.$

\begin{corollary}
\label{corollary:Hypot H}
Let $n\geq 2,$ $\beta\in [0,n]$ and $\Omega$ be a bounded open  smooth subset of $\re^n$ with $x\in\Omega.$
Then all level sets $A_r$ of $G_{\Omega,x}$
$$
  A_r=\left\{y\in\Omegabar:\,G_{\Omega,x}(y)>-\frac{1}{\cnn}\log r\right\},\qquad r\in(0,1]
$$
satisfy the inequality
\begin{equation}
 \label{eq:hypothesis:level sets G for each r}
  \omega_{n-1}^{\frac{n}{n-1}}I_{\Omega}^{n-\beta}(x)
  \leq
  \frac{1}{r^{n-\beta}}
  \int_{\partial A_r}\frac{|x-y|^{-\beta}}{|\nabla G_{\Omega,x}(y)|}\dH(y).
\end{equation}
\end{corollary}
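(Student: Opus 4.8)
The plan is to reduce the statement about the level set $A_r$ to Proposition \ref{prop:Hypot H for r equal 1} applied to $A_r$ itself, viewed as a new domain. The key observation is that $A_r$ is an open smooth set (for a.e.\ $r$, by Sard's theorem, and then all $r\in(0,1]$ by an approximation argument) containing $x$, so it has its own $n$-Green's function $G_{A_r,x}$ and its own conformal incenter $I_{A_r}(x)$. I would first establish the relation between $G_{A_r,x}$ and the restriction of $G_{\Omega,x}$ to $A_r$: since $G_{\Omega,x} + \tfrac1{\cnn}\log r$ is $n$-harmonic in $A_r\setminus\{x\}$, vanishes on $\partial A_r$, and has the same logarithmic singularity at $x$ as $G_{A_r,x}$ (the singular coefficient $-\tfrac1{\cnn}\log|x-y|$ is the same), uniqueness of the $n$-Green's function gives
$$
  G_{A_r,x}(y) = G_{\Omega,x}(y) + \frac{1}{\cnn}\log r, \qquad y\in A_r.
$$
Consequently the regular parts satisfy $H_{A_r,x}(y) = H_{\Omega,x}(y) - \tfrac1{\cnn}\log r$, and evaluating at $y=x$ gives
$$
  I_{A_r}(x) = e^{-\cnn H_{A_r,x}(x)} = e^{-\cnn H_{\Omega,x}(x)}\, e^{\log r} = r\, I_{\Omega}(x).
$$

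With this identity in hand, I would apply Proposition \ref{prop:Hypot H for r equal 1} to the domain $A_r$:
$$
  \omega_{n-1}^{n/(n-1)} I_{A_r}(x)^{n-\beta} \leq \int_{\partial A_r}\frac{|x-y|^{-\beta}}{|\nabla G_{A_r,x}(y)|}\dH(y).
$$
Since $\nabla G_{A_r,x} = \nabla G_{\Omega,x}$ on $A_r$ (the two differ by a constant), the right-hand side is exactly $\int_{\partial A_r} |x-y|^{-\beta}/|\nabla G_{\Omega,x}(y)|\,\dH(y)$. Substituting $I_{A_r}(x) = r\,I_{\Omega}(x)$ on the left and dividing through by $r^{n-\beta}$ yields precisely \eqref{eq:hypothesis:level sets G for each r}.

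The main obstacle is the regularity issue: Proposition \ref{prop:Hypot H for r equal 1} is stated for \emph{smooth} bounded sets, but $\partial A_r = \{G_{\Omega,x}=-\tfrac1{\cnn}\log r\}$ is only as smooth as the level sets of an $n$-harmonic function, which may have critical points. I would handle this by noting that $|\nabla G_{\Omega,x}|\neq 0$ on $\partial A_r$ for a.e.\ $r$ (so that $\partial A_r$ is a $C^{1,\alpha}$ hypersurface there, using the $C^{1,\alpha}_{loc}$ regularity of $H_{\Omega,x}$ away from $x$), establish the inequality for those $r$, and then extend to all $r\in(0,1]$ by a limiting argument, using that both sides depend on $r$ continuously/monotonically enough — the left side is continuous in $r$, and on the right one can use the coarea formula together with Proposition \ref{proposition:properties of Green's function}(b) to control $\int_{\partial A_r}|x-y|^{-\beta}/|\nabla G_{\Omega,x}|\,\dH$ as a function of $r$ (it is, up to the change of variable $t=-\tfrac1{\cnn}\log r$, the derivative in $t$ of $\int_{\{G>t\}}|x-y|^{-\beta}$, which is monotone by the argument in the proof of Lemma \ref{Volumeradii}). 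Alternatively, and perhaps more cleanly, one avoids Sard entirely by observing that the chain of inequalities in \eqref{eq:VR1} in the proof of Lemma \ref{Volumeradii} already holds for \emph{every} $t$ via the coarea formula and the fact that $t\mapsto e^{\alpha_{n,\beta}t}\int_{\{G>t\}}|y|^{-\beta}$ is non-increasing; combining the differential inequality at the level $t=-\tfrac1{\cnn}\log r$ with the $t\to\infty$ limit from Proposition \ref{proposition:properties of Green's function}(e) directly gives $\int_{\partial A_r}|x-y|^{-\beta}/|\nabla G_{\Omega,x}|\,\dH \geq \alpha_{n,\beta}\tfrac{\omega_{n-1}}{n-\beta}(r I_\Omega(x))^{n-\beta} = \omega_{n-1}^{n/(n-1)} r^{n-\beta} I_\Omega(x)^{n-\beta}$, which is \eqref{eq:hypothesis:level sets G for each r}; this route sidesteps the smoothness of $A_r$ altogether and is the one I would write up.
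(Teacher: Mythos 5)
Your main argument is exactly the paper's proof: apply Proposition \ref{prop:Hypot H for r equal 1} to the sublevel domain $A_r$, use the relations $G_{A_r,x} = G_{\Omega,x} + \tfrac{1}{\cnn}\log r$, hence $\nabla G_{A_r,x} = \nabla G_{\Omega,x}$ and $I_{A_r}(x) = r\,I_\Omega(x)$, and divide through by $r^{n-\beta}$. The paper's proof of Corollary \ref{corollary:Hypot H} consists of precisely these observations and nothing else.

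Where you go beyond the paper is in flagging the regularity issue: Proposition \ref{prop:Hypot H for r equal 1} (and, upstream, the weighted isoperimetric inequality \eqref{eq:weightediso}) is stated for smooth sets, and $\partial A_r$ is only a smooth hypersurface for those $r$ at which $-\tfrac{1}{\cnn}\log r$ is a regular value of $G_{\Omega,x}$. The paper applies the proposition to $A_r$ without comment, so your concern is a genuine (if minor) gap in the paper. Of your two proposed repairs, the Sard-plus-limiting argument is sound, but be aware that the second one does not actually avoid the issue: the estimate \eqref{eq:VR1} at a \emph{fixed} level $t$ is itself an application of the weighted isoperimetric inequality to $\{G_{\Omega,x}>t\}$, so it carries the same smoothness hypothesis — it holds a priori only for a.e.\ $t$, and the monotonicity of $t\mapsto e^{\alpha_{n,\beta}t}\int_{\{G>t\}}|y|^{-\beta}dy$ is a consequence of that a.e.\ statement, not a way around it. In practice it is enough for the downstream application (Theorem \ref{theorem:ball to general domain:sup inequality}) that \eqref{eq:hypothesis:level sets G for each r} holds for a.e.\ $r\in(0,1]$, since there the corollary is only used under an integral in $r$.
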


\begin{remark}
It can be shown that the inequality tends to an equality when $r\to 0,$ but this is not required for the proof of Theorem \ref{theorem:intro:Extremal for Singular Moser-Trudinger}. This follows from tha fact that $\lim_{y\to x}|y-x|\,|\nabla H_{\Omega,x}(y)|=0$ (see (1.2) in \cite{Kichenassamy Veron}) and therefore
$$
  |\nabla G_{\Omega,x}(y)|=\frac{1}{\omega_{n-1}^{1/(n-1)}|x-y|}(1+o(1))\quad\text{ as }y\to x.
$$
The proof is then similar to that of \cite{Lin K C} Lemma 1 (d) and Lemma 3, where it seems that it has been assumed that $\nabla H_{\Omega,x}$ is bounded near $x.$ The boundedness of $\nabla H_{\Omega,x}$ has been conjectured in \cite{Kichenassamy Veron} Remark 1.4, but we are not aware whether this has been proven.
\end{remark}

\begin{proof}
We can assume without loss of generality that $x=0\in\Omega.$ Apply Proposition \ref{prop:Hypot H for r equal 1} to the set $\Omega=A_r$. Note that $0\in A_r$ for all $r\in (0,1],$
$$
  G_{A_r,0}(y)=G_{\Omega,0}(y)+\frac{1}{\omega_{n-1}^{1/(n-1)}}\log r\quad\text{and}\quad 
  H_{A_r,0}(y)=H_{\Omega,0}(y)-\frac{1}{\omega_{n-1}^{1/(n-1)}}\log r.
$$
In particular this implies that $\nabla G_{A_r,0}=\nabla G_{\Omega,0}$ and $I_{A_r}(0)=r\, I_{\Omega}(0).$ This proves \eqref{eq:hypothesis:level sets G for each r}. 
\end{proof}

\section{Some Preliminary Results}\label{section:prelim. results}

We first note that it is sufficient to work with non-negative smooth maximizing sequences. More precisely we have the following lemma, which we will use in Section \ref{section:domain to ball} in a crucial way.

\begin{lemma}
\label{lemma new:sup over W12 same as over Cinfty}
Let $\{u_i\}\subset\mathcal{B}_1(\Omega)$ be a sequence such that the limit $\lim_{i\to\infty}F_{\Omega}(u_i)$ exists. Then there exists a sequence $\{w_i\}\subset \mathcal{B}_1(\Omega)\cap C_c^{\infty}(\Omega)$ such that 
$$
  \liminf_{i\to\infty}F_{\Omega}(w_i)\geq \lim_{i\to\infty}F_{\Omega}(u_i).
$$
Moreover, if $u_i$ concentrates at $x_0\in\Omegabar,$ then also $w_i$ concentrates at $x_0$. In particular maximizing sequences for $F_{\Omega}^{\sup}$ and $F_{\Omega}^{\delta}(x_0)$ can always be assumed to be smooth and non-negative.
\end{lemma}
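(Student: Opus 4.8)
The plan is to reduce a general sequence $\{u_i\}\subset\mathcal{B}_1(\Omega)$ to a smooth, non-negative, compactly supported one in three steps, at each step not decreasing the limit of $F_\Omega$ and preserving the concentration point (if any). First I would replace $u_i$ by $|u_i|$. Since $\bigl|\nabla|u_i|\bigr|=|\nabla u_i|$ a.e., we still have $|u_i|\in\mathcal{B}_1(\Omega)$, and $F_\Omega(|u_i|)=F_\Omega(u_i)$ because the integrand depends only on $|u_i|^{n/(n-1)}$. Likewise $\int_{\Omega\setminus B_\epsilon(x_0)}\bigl|\nabla|u_i|\bigr|^n=\int_{\Omega\setminus B_\epsilon(x_0)}|\nabla u_i|^n$, so concentration is preserved, and of course $\|\nabla|u_i|\|_{L^n}\to 1$ if $\|\nabla u_i\|_{L^n}\to 1$. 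So from now on assume $u_i\geq 0$.

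Second, I would approximate each $u_i$ in $W^{1,n}_0(\Omega)$ by maps in $C_c^\infty(\Omega)$. Fix $i$. By density there is a sequence $\varphi_{i,k}\in C_c^\infty(\Omega)$ with $\varphi_{i,k}\to u_i$ in $W^{1,n}_0(\Omega)$ as $k\to\infty$; replacing $\varphi_{i,k}$ by $|\varphi_{i,k}|$ and then mollifying (or, more cleanly, noting $\max(\varphi_{i,k},0)\to u_i$ and smoothing) one may take $\varphi_{i,k}\geq 0$. The key continuity input is that $v\mapsto F_\Omega(v)$ is continuous on $W^{1,n}_0(\Omega)$: this follows from the Moser--Trudinger inequality \eqref{eq:intro:supremum sing moser trud} together with the elementary inequality $|e^a-e^b|\leq(e^a+e^b)|a-b|$ and Hölder, exactly as in the standard argument that the Moser--Trudinger functional is continuous on bounded subsets of $W^{1,n}_0$ (one needs a power $\alpha'$ slightly larger than $\alpha$ still satisfying \eqref{intro:eq:alpha and beta sum}, which is available whenever $\|\nabla v\|_{L^n}$ is bounded away from the critical threshold, or by a scaling trick — this is the one point requiring a little care). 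Hence for each $i$ we may pick $k(i)$ so large that, setting $\psi_i=\varphi_{i,k(i)}$, we have $\|\nabla\psi_i\|_{L^n}\leq 1+1/i$, $|F_\Omega(\psi_i)-F_\Omega(u_i)|<1/i$, and $\bigl|\int_{\Omega\setminus B_\epsilon(x_0)}|\nabla\psi_i|^n-\int_{\Omega\setminus B_\epsilon(x_0)}|\nabla u_i|^n\bigr|<1/i$ for $\epsilon=1/i$ (a diagonal choice). Finally set $w_i=\psi_i/\max(1,\|\nabla\psi_i\|_{L^n})$, so $w_i\in\mathcal{B}_1(\Omega)\cap C_c^\infty(\Omega)$, $w_i\geq 0$, the normalizing factor tends to $1$, hence $\|\nabla w_i\|_{L^n}\to 1$ if the original sequence had this property, $F_\Omega(w_i)\to\lim_i F_\Omega(u_i)$ (using continuity of $F_\Omega$ once more to absorb the normalization), and $w_i$ concentrates at $x_0$ whenever $u_i$ does, by the diagonal estimates above. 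Taking $\liminf$ gives $\liminf_i F_\Omega(w_i)\geq\lim_i F_\Omega(u_i)$, which is the claim; the final sentence about maximizing sequences for $F_\Omega^{\sup}$ and $F_\Omega^\delta(x_0)$ is then immediate by applying this to any such sequence.

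The main obstacle is the continuity of $F_\Omega$ on $W^{1,n}_0(\Omega)$ at the critical growth rate: a crude bound does not suffice because $e^{\alpha|v|^{n/(n-1)}}$ is only borderline integrable. I expect to handle it by the standard device of splitting $v=\psi_i$ and $u_i$ and using that along the approximating subsequence $\|\nabla\psi_i\|_{L^n}$ stays in a fixed bounded set, so that a slightly supercritical exponent remains subcritical after the small perturbation — more precisely, one writes the difference of exponentials via the mean value inequality, applies Hölder with exponents $p,p'$, and controls $\int(e^{p\alpha|\psi_i|^{n/(n-1)}}+e^{p\alpha|u_i|^{n/(n-1)}})/|x|^\beta$ by \eqref{eq:intro:supremum sing moser trud} after rescaling $\psi_i,u_i$ by a constant $<1$; since $p$ can be taken close to $1$, the rescaled functions still lie in $\mathcal{B}_1$ up to a factor converging to $1$. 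Everything else — the reduction to non-negative maps and the diagonal extraction — is routine.
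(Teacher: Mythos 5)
Your overall scheme (approximate, extract a diagonal sequence) is the right idea, but you are proving more than the lemma asks and paying for it. The lemma only requires the one-sided inequality $\liminf_i F_\Omega(w_i)\geq\lim_i F_\Omega(u_i)$, and the paper (following Lemma 4 of \cite{Csato Roy Calc var Pde}) gets exactly that from Fatou's lemma, which needs nothing beyond pointwise a.e.\ convergence of the approximants: one picks $v_i^k\in C_c^\infty(\Omega)$ with $v_i^k\to u_i$ a.e.\ and in $W^{1,n}$, normalizes \emph{a priori} so that $\|\nabla v_i^k\|_{L^n}=\|\nabla u_i\|_{L^n}$ for every $k$, and then Fatou gives $F_\Omega(u_i)\leq\liminf_k F_\Omega(v_i^k)$. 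A diagonal choice of $k(i)$ then produces $w_i$ with the desired properties, and the gradient normalization also makes the concentration statement immediate. No continuity of $F_\Omega$, no post hoc rescaling.

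By contrast, your argument leans on the \emph{two-sided} continuity of $F_\Omega$ under strong $W^{1,n}_0$-convergence at the critical level $\alpha/\alpha_n+\beta/n=1$, and the mechanism you sketch for it does not work as stated. The obstruction: for $u\in\mathcal{B}_1(\Omega)$ and $p>1$, the quantity $\int_\Omega e^{p\alpha|u|^{n/(n-1)}}/|x|^\beta$ is \emph{not} uniformly bounded, since $p\alpha/\alpha_n+\beta/n>1$; and rescaling $u\mapsto cu$ with $c<1$ moves you in the wrong direction, because to dominate $e^{p\alpha|u|^{n/(n-1)}}$ by a Moser--Trudinger integral of $cu$ you would need $p\alpha c^{-n/(n-1)}/\alpha_n+\beta/n\leq1$, i.e.\ $c\geq p^{(n-1)/n}>1$. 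Strong continuity of $F_\Omega$ \emph{is} true (one way to prove it is to write $v_k=v+r_k$, use the elementary inequality $(a+b)^{n/(n-1)}\leq(1+\epsilon)a^{n/(n-1)}+C_\epsilon b^{n/(n-1)}$, and combine the fact that $e^{\gamma|v|^{n/(n-1)}}|x|^{-\beta}\in L^1$ for any fixed $v\in W^{1,n}_0$ and any $\gamma>0$ with the uniform Moser--Trudinger bound applied to $r_k/\|\nabla r_k\|_{L^n}$, yielding equi-integrability and hence Vitali), but this is a genuinely more delicate fact than the lemma warrants. Your final normalization $w_i=\psi_i/\max(1,\|\nabla\psi_i\|_{L^n})$ also needs a lower bound on $F_\Omega(c_i\psi_i)$ with $c_i\to1$, again invoking some continuity with a moving base point, which you do not address. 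The paper's device of fixing $\|\nabla v_i^k\|_{L^n}=\|\nabla u_i\|_{L^n}$ from the outset sidesteps all of this.
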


\begin{proof}The proof is exactly the same as in the case $n=2,$ see Lemma 4 in \cite{Csato Roy Calc var Pde}.
\end{proof}
\smallskip

\begin{lemma}[compactness in interior]\label{lemma:compactness in the interior}
Let $0<\eta<1$ and suppose $\{u_i\}\subset W_0^{1,n}(\Omega)$ is such that
$$
  \limsup_{i\to\infty}\|\nabla u_i\|_{L^n}\leq \eta\quad\text{and}\quad 
  u_i\rightharpoonup u\text{ in }W^{1,n}(\Omega)
$$
for some $u\in W^{1,n}(\Omega).$ Then for some subsequence
$$
  \frac{e^{\alpha u_i^\frac{n}{n-1}}}{|x|^{\beta}}\to \frac{e^{\alpha u^\frac{n}{n-1}}}{|x|^{\beta}}\quad\text{ in }L^1(\Omega)
$$
and in particular $\lim_{i\to\infty}F_{\Omega}(u_i)=F_{\Omega}(u).$
\end{lemma}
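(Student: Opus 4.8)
The plan is to prove that the densities $g_i:=\dfrac{e^{\alpha u_i^{n/(n-1)}}}{|x|^{\beta}}$ form a uniformly integrable family and converge a.e.\ (along a subsequence) to $g:=\dfrac{e^{\alpha u^{n/(n-1)}}}{|x|^{\beta}}$, and then to invoke Vitali's convergence theorem. The pointwise convergence is immediate: since $\Omega$ is bounded, $W^{1,n}_0(\Omega)$ embeds compactly into $L^q(\Omega)$ for every $q<\infty$, so $u_i\rightharpoonup u$ in $W^{1,n}$ forces $u_i\to u$ strongly in every $L^q(\Omega)$, hence $u_i\to u$ a.e.\ in $\Omega$ along a subsequence (not relabelled); by continuity of $t\mapsto e^{\alpha|t|^{n/(n-1)}}$ this gives $g_i\to g$ a.e.

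The heart of the matter is the equi-integrability, where the gap $\eta<1$ is used to gain an integrability exponent $q>1$. Fix $\theta\in(\eta,1)$; then $\|\nabla u_i\|_{L^n}\leq\theta$ for all large $i$, so $v_i:=u_i/\theta\in\mathcal B_1(\Omega)$ and $\alpha u_i^{n/(n-1)}=\alpha\theta^{n/(n-1)}v_i^{n/(n-1)}$ with $\alpha\theta^{n/(n-1)}<\alpha$. Because $\tfrac{\alpha}{\alpha_n}+\tfrac{\beta}{n}\leq1$ by \eqref{intro:eq:alpha and beta sum} and $\theta^{n/(n-1)}<1$, the quantity $\tfrac{\alpha\theta^{n/(n-1)}}{\alpha_n}+\tfrac{\beta}{n}$ is \emph{strictly} less than $1$, so one may pick $q>1$ close enough to $1$ that $\tfrac{q\alpha\theta^{n/(n-1)}}{\alpha_n}+\tfrac{\beta}{n}\leq1$ still holds. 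Applying the singular Moser--Trudinger inequality \eqref{eq:intro:supremum sing moser trud} to $v_i$ with the admissible parameter $q\alpha\theta^{n/(n-1)}$, and using $\int_{\Omega}|x|^{-\beta}\,dx<\infty$ (valid since $\beta<n$), we obtain
\[
  C:=\sup_i\int_{\Omega}e^{q\alpha u_i^{n/(n-1)}}|x|^{-\beta}\,dx<\infty .
\]
Now let $s=q/(q-1)$ and split $|x|^{-\beta}=|x|^{-\beta/q}\,|x|^{-\beta/s}$. For any measurable $E\subset\Omega$, H\"older's inequality gives
\[
  \int_E g_i\,dx\leq\Bigl(\int_E e^{q\alpha u_i^{n/(n-1)}}|x|^{-\beta}\,dx\Bigr)^{1/q}\Bigl(\int_E|x|^{-\beta}\,dx\Bigr)^{1/s}\leq C^{1/q}\Bigl(\int_E|x|^{-\beta}\,dx\Bigr)^{1/s}.
\]
Since $|x|^{-\beta}\in L^1(\Omega)$, the integral $\int_E|x|^{-\beta}\,dx$ tends to $0$ as $|E|\to0$, uniformly in $i$; hence $\{g_i\}$ is uniformly integrable.

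By Vitali's theorem, $g_i\to g$ in $L^1(\Omega)$ along the chosen subsequence; subtracting the constant $\int_\Omega|x|^{-\beta}\,dx$ yields $F_\Omega(u_i)\to F_\Omega(u)$ along that subsequence. Since the weak limit $u$ is the same for every subsequence of $\{u_i\}$, the standard subsequence argument upgrades the last convergence to the full limit $\lim_{i\to\infty}F_\Omega(u_i)=F_\Omega(u)$. The only genuinely delicate point is the selection of the exponent $q>1$, which is exactly where the hypothesis $\eta<1$ enters; the remaining steps (Rellich compactness, H\"older, absolute continuity of the integral of $|x|^{-\beta}$, Vitali) are routine.
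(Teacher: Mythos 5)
Your proof is correct and follows essentially the same route as the paper: Vitali's theorem driven by a.e.\ convergence (via compact embedding) plus equi-integrability, the latter obtained by rescaling $v_i=u_i/\theta$ with $\theta\in(\eta,1)$ to open up slack in \eqref{intro:eq:alpha and beta sum} and then applying H\"older together with the Moser--Trudinger bound. The only cosmetic difference is the H\"older split: the paper uses exponents $r=\alpha_n/\overline{\alpha}$ and $s$ so that the exponential factor becomes exactly $\int_E e^{\alpha_n v_i^{n/(n-1)}}$ (the non-singular Moser--Trudinger functional) with the full weight $|x|^{-\beta s}$, $\beta s<n$, placed in the other factor; you instead keep the singular functional \eqref{eq:intro:supremum sing moser trud} in the first factor and split $|x|^{-\beta}$ between the two, which works just as well.
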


\begin{proof} The idea of the proof is to apply Vitali convergence theorem.
We can assume that, up to a subsequence, that $u_i\to u$ almost everywhere in $\Omega$ and that
$$
  \|\nabla u_i\|_{L^n}\leq\theta=\frac{1+\eta}{2}<1\quad\forall\,i\in\mathbb{N}.
$$
We can therefore define $v_i=u_i/\theta\in \WBOmega,$
which satisfies $\|\nabla v_i\|_{L^n}\leq 1$ for all $i.$ Moreover let us define $\overline{\alpha}=\alpha\theta^{n/(n-1)}<\alpha,$ such that
$$
  \frac{\overline{\alpha}}{\alpha_n}+\frac{\beta}{n}<1.
$$
Let $E\subset\Omega$ be an arbitrary measurable set. We use H\"older inequality with exponents $r$ and $s,$ where
$$
  r=\frac{\alpha_n}{\overline{\alpha}}>1\quad\text{ and }\quad \frac{1}{s}=1-\frac{1}{r}> \frac{\beta}{n},
$$
to obtain that
$$
  \int_E\frac{e^{\alpha u_i^{\frac{n}{n-1}}}}{|x|^{\beta}}=
  \int_E\frac{e^{\overline{\alpha}v_i^{\frac{n}{n-1}}}}{|x|^{\beta}} \leq
  \left(\int_{E}e^{\alpha_n v_i^{\frac{n}{n-1}}}\right)^{\frac{1}{r}} \left(\int_E\frac{1}{|x|^{\beta s}}\right)^{\frac{1}{s}}.
$$
Let $\epsilon>0$ be given.
In view of the Moser-Trudinger inequality and using that $1/|x|^{\beta s}\in L^1(\Omega),$ we obtain that for any $\epsilon>0$ there exists a $\delta>0$ such that
$$
  \int_E\frac{e^{\alpha u_i^{\frac{n}{n-1}}}}{|x|^{\beta}}\leq \epsilon\quad\forall\,|E|\leq \delta\text{ and }i\in\mathbb{N}.
$$
This shows that the sequence $e^{\alpha u_i^{n/(n-1)}}/|x|^{\beta}$ is equi-integrable and the Vitali convergence theorem yields convergence in $L^1(\Omega).$
This proves the lemma.
\end{proof}
\smallskip

The proof of the next theorem can be found in Lions \cite{Lions}, Theorem I.6.

\begin{theorem}[Concentration-Compactness Alternative]\label{theorem:concentration alternative for singular moser trudinger}
Let $\{u_i\}\subset \mathcal{B}_1(\Omega).$ Then there is a subsequence and $u\in W_0^{1,n}(\Omega)$ with $u_i\rightharpoonup u$ in $W^{1,n}(\Omega),$  such that either

(a) $\{u_i\}$ concentrates at a point $x\in\Omegabar,$
\newline or

(b) the following convergence holds true
$$
  \lim_{i\to\infty}F_{\Omega}(u_i)=F_{\Omega}(u).
$$
\end{theorem}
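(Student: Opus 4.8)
This is Theorem I.6 of Lions \cite{Lions}, so the plan is to run that standard concentration--compactness scheme, adapted to accommodate the weight $|x|^{-\beta}$. First I would pass to a subsequence along which $u_i\rightharpoonup u$ in $W^{1,n}(\Omega)$, $u_i\to u$ in $L^q(\Omega)$ for every $q<\infty$ and almost everywhere (Rellich), $\|\nabla u_i\|_{L^n}\to L\in[0,1]$, and $|\nabla u_i|^n\,dx\rightharpoonup\mu$ weakly-$*$ as Radon measures on the compact set $\Omegabar$, with $\mu\ge 0$ and $\mu(\Omegabar)=L$. Weak lower semicontinuity of $v\mapsto\int_V|\nabla v|^n$ over open sets $V$ gives $|\nabla u|^n\,dx\le\mu$; in particular $\mu=\delta_{x_0}$ is possible only if $u\equiv 0$. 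This yields the dichotomy: either (A) $\mu=\delta_{x_0}$ for some $x_0\in\Omegabar$, or (B) every atom of $\mu$ has mass strictly less than $1$.

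Case (A) should give alternative (a): here $L=1$, so $\|\nabla u_i\|_{L^n}\to 1$, and for every $\epsilon>0$ the portmanteau theorem applied to the open ball $B_\epsilon(x_0)$ together with $\mu_i(\Omegabar)\to 1$ gives $\int_{\Omega\setminus B_\epsilon(x_0)}|\nabla u_i|^n\to 0$, which is precisely concentration at $x_0$.

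In case (B) I claim alternative (b) holds. The engine is a Lions-type refinement of the Moser--Trudinger inequality: under (B) there are $p>1$ and $C<\infty$ with $\sup_i\intomega e^{p\,\alpha_n|u_i|^{n/(n-1)}}\le C$. Granting this, the family $e^{\alpha|u_i|^{n/(n-1)}}/|x|^{\beta}$ is uniformly integrable --- for any measurable $E\subset\Omega$, H\"older's inequality with a conjugate pair $(r,s)$ for which $\tfrac{n}{n-\beta}<r$ and $r\alpha\le p\,\alpha_n$ (admissible exactly because $\tfrac{\alpha}{\alpha_n}+\tfrac{\beta}{n}\le 1$ and $p>1$) bounds $\int_E e^{\alpha|u_i|^{n/(n-1)}}|x|^{-\beta}$ by $C^{1/r}\big(\int_E|x|^{-\beta s}\big)^{1/s}$ with $|x|^{-\beta s}\in L^1(\Omega)$ since $\beta s<n$ --- this is the very computation already used in the proof of Lemma~\ref{lemma:compactness in the interior}. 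Combined with $u_i\to u$ a.e., Vitali's convergence theorem then forces $F_\Omega(u_i)\to F_\Omega(u)$.

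The refined exponential bound under (B) is the main obstacle. I would split on $a:=\|\nabla u\|_{L^n}$. If $a=1$, uniform convexity of $L^n$ upgrades $\nabla u_i\rightharpoonup\nabla u$ to strong convergence, and $|u_i|^{n/(n-1)}\le(1+\delta)|u|^{n/(n-1)}+C_\delta|u_i-u|^{n/(n-1)}$ together with Moser--Trudinger for the remainder (whose Dirichlet energy tends to $0$) gives the bound for every $p$. If $0<a<1$, the same elementary splitting, the Brezis--Lieb identity $\|\nabla u_i\|_{L^n}^n=a^n+\|\nabla(u_i-u)\|_{L^n}^n+o(1)$, and a H\"older decomposition of $\intomega e^{p\alpha_n|u_i|^{n/(n-1)}}$ into a factor controlled by Moser--Trudinger applied to $(u_i-u)/\|\nabla(u_i-u)\|_{L^n}$ and a finite factor depending only on the fixed function $u$, give the bound for any $p<(1-a^n)^{-1/(n-1)}>1$. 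If $a=0$, then under (B) the largest atom $\lambda:=\max_{x\in\Omegabar}\mu(\{x\})<1$: covering $\Omegabar$ by finitely many balls $B_j$ whose closures satisfy $\mu(\overline{B_j})<\theta$ for a fixed $\theta\in(\lambda,1)$ and cutting $u_i$ off on each (the cross terms $u_i\nabla\phi_j$ vanish in $L^n$ because $u_i\to 0$ in $L^n$) gives $\limsup_i\|\nabla(\phi_j u_i)\|_{L^n}^n<1$ ball by ball, so Moser--Trudinger furnishes there a uniform $L^p$ bound with some $p>1$; summing over a subordinate cover of $\Omegabar$ completes the proof. The delicate points are the choice of exponents that neutralises the weight in case (B) and, when $0<a<1$, the Brezis--Lieb-type splitting of the exponential nonlinearity.
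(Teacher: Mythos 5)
The paper does not give its own proof of this statement; it simply cites Lions, Theorem I.6, for the unweighted case and silently leaves the (easy) adaptation to the weight $|x|^{-\beta}$ to the reader. Your proposal correctly identifies what that adaptation is (H\"older on the weight exactly as in Lemma~\ref{lemma:compactness in the interior}, then Vitali), and your overall scheme — extract $|\nabla u_i|^n\,dx\rightharpoonup\mu$, split on whether $\mu$ is a single Dirac, treat $a=1$ by Radon--Riesz, $a=0$ by a covering argument, and $0<a<1$ by a Brezis--Lieb splitting — is precisely Lions' route, so the structure matches the paper's intention.

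There is, however, one step you should not pass over as routine. The identity
$\|\nabla u_i\|_{L^n}^n = a^n + \|\nabla(u_i-u)\|_{L^n}^n + o(1)$
is \emph{not} a consequence of $u_i\rightharpoonup u$ in $W^{1,n}_0(\Omega)$: the Brezis--Lieb lemma applied to the vector fields $\nabla u_i$ requires $\nabla u_i\to\nabla u$ almost everywhere, and Rellich only gives $u_i\to u$ a.e., not the gradients (rapidly oscillating gradients are the standard obstruction). This is a recognized soft spot in Lions' original argument for the borderline Trudinger--Moser case, and the optimal admissible range $p<(1-a^n)^{-1/(n-1)}$ that you claim is exactly what is affected. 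For the alternative stated in the theorem you do not need the optimal $p$: it suffices to have \emph{some} $p>1$, and one can obtain this without Brezis--Lieb by first truncating $u$ (so that the commutator term $u\nabla\phi_j$ in the localized energy is controlled) and then running your $a=0$ covering argument for $u_i-u^M$, or by invoking one of the by-now-standard rigorous replacements for Lions' lemma. As written, though, the passage from weak $W^{1,n}$ convergence to the Brezis--Lieb identity for the gradients is a genuine gap; it needs either an additional argument producing a.e.\ convergence of $\nabla u_i$ or a reformulation that avoids the identity altogether.
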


The proof of the next two propositions is the same as in the $2$-dimensional case, see \cite{Csato Roy Calc var Pde}.

\begin{proposition}
\label{proposition:if u_i concentrates somewhere else than zero}
Let $\beta>0,$ $\{u_i\}\subset  \mathcal{B}_1(\Omega)$ and suppose that $u_i$ concentrates at $x_0\in\Omegabar,$ where $x_0\neq 0.$ Then 
one has that, for some subsequence, $u_i\rightharpoonup 0$ in $W^{1,n}(\Omega)$  and
$$
  \lim_{i\to\infty}F_{\Omega}(u_i)=F_{\Omega}(0)=0.
$$
In particular $F_{\Omega}^{\delta}(x_0)=0.$
\end{proposition}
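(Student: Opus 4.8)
\textbf{Proof plan for Proposition \ref{proposition:if u_i concentrates somewhere else than zero}.}

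The plan is to exploit the fact that concentration at a point $x_0 \neq 0$ forces the ``mass'' of the sequence to be carried on a small ball $B_\epsilon(x_0)$ that stays uniformly away from the singularity of the weight $|x|^{-\beta}$. First I would recall the standard property of concentrating sequences noted in Section \ref{section:notation}: since $\{u_i\}$ concentrates at $x_0$, we have $u_i \rightharpoonup 0$ in $W^{1,n}(\Omega)$, hence by \eqref{eq:properties of concentrating sequences} $u_i \to 0$ in $L^q(\Omega)$ for every $q < \infty$, and after passing to a subsequence $u_i \to 0$ almost everywhere in $\Omega$.

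The main step is to split the integral defining $F_\Omega(u_i)$ into the part over $B_\epsilon(x_0)$ and the part over $\Omega \setminus B_\epsilon(x_0)$, for a fixed small $\epsilon > 0$ with $0 \notin \overline{B_\epsilon(x_0)}$ (possible since $x_0 \neq 0$; if $x_0 \in \partial\Omega$ we still only need $\epsilon$ small enough that $|x| \geq c > 0$ on $\Omega \cap B_\epsilon(x_0)$). On $B_\epsilon(x_0)$ the weight is bounded, $|x|^{-\beta} \leq c^{-\beta}$, so
$$
  \int_{\Omega \cap B_\epsilon(x_0)} \frac{e^{\alpha u_i^{n/(n-1)}} - 1}{|x|^\beta}\, dx \leq c^{-\beta} \int_{\Omega \cap B_\epsilon(x_0)} \left(e^{\alpha u_i^{n/(n-1)}} - 1\right) dx,
$$
and I would bound the right-hand side using the (non-singular) Moser--Trudinger inequality on the small ball together with $\alpha < \alpha_n$; one can also absorb the geometry of $B_\epsilon(x_0) \cap \Omega$ by monotonicity of the integrand. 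Since the total $L^n$-norm of $\nabla u_i$ restricted to $\Omega \setminus B_\epsilon(x_0)$ tends to $0$ by the definition of concentration, on the complement $\Omega \setminus B_\epsilon(x_0)$ I would apply Lemma \ref{lemma:compactness in the interior} (with $u = 0$, since $u_i \rightharpoonup 0$): this yields $\int_{\Omega \setminus B_\epsilon(x_0)} (e^{\alpha u_i^{n/(n-1)}} - 1)/|x|^\beta \to 0$.

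The delicate point is handling the contribution near $x_0$, because there the exponential integrand need not be small — concentration allows $u_i$ to be large on a tiny set around $x_0$. The resolution is that although the $L^n$-norm of $\nabla u_i$ on $B_\epsilon(x_0)$ is close to $1$ (not small), the weight $|x|^{-\beta}$ is bounded there, so the singular functional behaves like the ordinary Moser--Trudinger functional, which on any fixed bounded domain is itself bounded by a constant; then one lets $\epsilon \to 0$. More precisely, I would show that $\limsup_i \int_{\Omega \cap B_\epsilon(x_0)} (e^{\alpha u_i^{n/(n-1)}} - 1)/|x|^\beta$ is bounded by $c^{-\beta}$ times the Moser--Trudinger supremum on $B_\epsilon(x_0)$, which is bounded by a constant times $|B_\epsilon(x_0)|$ as $\epsilon \to 0$ (using $\alpha < \alpha_n$ and a scaling/absolute continuity argument, exactly as in \cite{Csato Roy Calc var Pde}). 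Combining the two estimates and sending $\epsilon \to 0$ gives $\lim_i F_\Omega(u_i) = 0 = F_\Omega(0)$. Finally, taking the supremum over all such concentrating sequences yields $F_\Omega^\delta(x_0) = 0$. The only real obstacle, as indicated, is making the near-$x_0$ estimate quantitatively vanish as $\epsilon \to 0$ rather than merely bounded; this uses the strict inequality $\alpha/\alpha_n + \beta/n \le 1$ only in the weak form $\alpha \le \alpha_n$ together with the boundedness of the weight away from the origin, and is carried out verbatim as in the two-dimensional reference.
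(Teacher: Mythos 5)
Your decomposition into a neighborhood of $x_0$ (weight bounded, gradient mass concentrates) and its complement (gradient mass vanishes, weight possibly singular at $0$) is the right strategy and matches the structure of the cited two-dimensional proof. However, two steps are not as immediate as written. On the complement you want to invoke Lemma \ref{lemma:compactness in the interior}; that lemma requires a sequence in $W^{1,n}_0(\Omega)$ with $\limsup\|\nabla u_i\|_{L^n(\Omega)}<1$, and neither $u_i$ itself (whose full Dirichlet norm tends to $1$) nor the restriction $u_i|_{\Omega\setminus B_\epsilon(x_0)}$ (which has nonzero trace on $\partial B_\epsilon(x_0)$) satisfies those hypotheses. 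One must first cut off: choosing $\zeta\equiv 0$ on $B_\epsilon(x_0)$, $\zeta\equiv 1$ off $B_{2\epsilon}(x_0)$, one has $\|\nabla(\zeta u_i)\|_{L^n}\leq\|\nabla u_i\|_{L^n(\Omega\setminus B_\epsilon(x_0))}+C\epsilon^{-1}\|u_i\|_{L^n}\to 0$ by concentration and Rellich, and Lemma \ref{lemma:compactness in the interior} applied to $\zeta u_i$ (with $u=0$) takes care of the integral over $\Omega\setminus B_{2\epsilon}(x_0)$. The same issue reappears when you bound the near-$x_0$ piece by ``the Moser--Trudinger supremum on $B_\epsilon(x_0)$'', since $u_i$ does not vanish on $\partial B_\epsilon(x_0)$. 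It is cleaner to bypass the scaling and $\epsilon\to 0$ altogether: because $\alpha<\alpha_n$, H\"older together with the Moser--Trudinger bound $\int_\Omega e^{\alpha_n u_i^{n/(n-1)}}\leq C$ gives $\int_E e^{\alpha u_i^{n/(n-1)}}\leq C^{\alpha/\alpha_n}|E|^{1-\alpha/\alpha_n}$, so $\{e^{\alpha u_i^{n/(n-1)}}\}$ is uniformly integrable and Vitali with $u_i\to 0$ a.e. yields $\int_{B_\epsilon(x_0)\cap\Omega}(e^{\alpha u_i^{n/(n-1)}}-1)\to 0$ for every fixed $\epsilon$.

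Separately, the closing sentence, claiming the near-$x_0$ estimate ``uses \dots only in the weak form $\alpha\leq\alpha_n$'', is not correct and, read literally, would make the step false. With $\alpha=\alpha_n$ (which forces $\beta=0$, excluded by the hypothesis $\beta>0$), the concentration level of the Moser--Trudinger functional is the strictly positive Carleson--Chang value, so the near-$x_0$ contribution does not vanish along concentrating sequences. The strict inequality $\alpha<\alpha_n$, which is exactly what $\beta>0$ supplies through $\alpha/\alpha_n+\beta/n\leq 1$, is what makes both the uniform-integrability argument and the cut-off/scaling argument go through; you state $\alpha<\alpha_n$ correctly earlier in the proposal, so only this final sentence needs to be amended.
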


\begin{remark}
\label{remark:F delta zero is not zero}
However, if $\alpha/\alpha_n+\beta/n=1$ then $F_{\Omega}^{\delta}(0)>0.$ To see this assume that $\epsilon>0$ is such that $B_{\epsilon}(0)\subset\Omega$ and define
$$
  u_i(x)=\left\{
  \begin{array}{rl}
  \displaystyle
   i&\text{ if }0\leq |x|\leq \epsilon e^{-\cnn i^{n/(n-1)}}
   \smallskip \\
   -\frac{1}{\cnn i^{1/(n-1)}}\log\left(\frac{|x|}{\epsilon}\right)
   &\text{ if } \epsilon e^{-\cnn i^{n/(n-1)}}\leq |x|\leq \epsilon
   \smallskip \\
   0 &\text{ if } |x|\geq \epsilon.
  \end{array}\right.
$$
One verifies by explicit calculation that $\{u_i\}\subset\mathcal{B}_1(\Omega)$ and it concentrates at $0.$ Let us show that $\liminf_{i\to\infty}F_{\Omega}(u_i)>0.$ First we make the estimate
$$
  F_{\Omega}(u_i)=\intomega\frac{e^{\alpha u_i^{\frac{n}{n-1}}}-1}{|x|^{\beta}}
  \geq 
  \int_{B_{a_i}(0)}\frac{e^{\alpha u_i^{\frac{n}{n-1}}}-1}{|x|^{\beta}},
  \quad\text{ where }\quad a_i=\epsilon e^{-\omega_{n-1}^{\frac{1}{n-1}}i^{\frac{n}{n-1}}}.
$$
To conclude it is sufficient to show that
$$
  \lim_{i\to\infty}\int_{B_{a_i}(0)}\frac{e^{\alpha u_i^{\frac{n}{n-1}}}-1}{|x|^{\beta}}=\omega_{n-1}\epsilon^{n-\beta}>0.
$$
We can use that  $|x|^{-\beta}$ is integrable and hence
$$
  \lim_{i\to\infty}\int_{B_{a_i}(0)}|x|^{-\beta}=0.
$$
One calculates that
$$
  \int_{B_{a_i}(0)}\frac{e^{\alpha u_i^{\frac{n}{n-1}}}}{|x|^{\beta}}
  =\omega_{n-1}\epsilon^{n-\beta}e^{i^{\frac{n}{n-1}}\left(\alpha-(n-\beta)\omega_{n-1}^{\frac{1}{n-1}} \right)}
  =\omega_{n-1}\epsilon^{n-\beta}\quad\text{ if }\frac{\alpha}{\alpha_n}+ \frac{\beta}{n}=1.
$$
This shows that $F_{\Omega}^{\delta}(0)>0.$
\end{remark}

We first prove Theorem \ref{theorem:intro:Extremal for Singular Moser-Trudinger} for some simple cases, which is the content of the next proposition. The proof is exactly the same as in the $2$-dimensional case, see \cite{Csato Roy Calc var Pde}.

\begin{proposition}
\label{proposition:sup attained if 0 notin Omegabar}
There exists $u\in \mathcal{B}_1(\Omega)$ such that $F_{\Omega}(u)=F_{\Omega}^{\sup}$ in the following cases:
$$
\text{(i)}\quad0\notin\Omegabar\quad\text{ or }\quad
 \text{(ii)}\quad\frac{\alpha}{\alpha_n}+\frac{\beta}{n}<1.
$$
\end{proposition}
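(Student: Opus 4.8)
The plan is to handle the two easy cases by a standard compactness argument based on the Concentration-Compactness Alternative (Theorem \ref{theorem:concentration alternative for singular moser trudinger}) together with Lemma \ref{lemma:compactness in the interior}. In both cases the strategy is the same: take a maximizing sequence $\{u_i\}\subset\mathcal{B}_1(\Omega)$ for $F_{\Omega}^{\sup}$, apply Theorem \ref{theorem:concentration alternative for singular moser trudinger} to extract a subsequence which either concentrates at some point $x_0\in\Omegabar$ or satisfies $F_{\Omega}(u_i)\to F_{\Omega}(u)$ for a weak limit $u$. In the non-concentrating alternative we are done immediately: $u\in\mathcal{B}_1(\Omega)$ by weak lower semicontinuity of the Dirichlet norm, and $F_{\Omega}(u)=\lim F_{\Omega}(u_i)=F_{\Omega}^{\sup}$, so $u$ is the desired extremal. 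The whole point is thus to rule out concentration, and this is where the two hypotheses (i) and (ii) enter.

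For case (i), $0\notin\Omegabar$: here the weight $|x|^{-\beta}$ is bounded on $\Omega$, say $|x|^{-\beta}\leq M$ on $\Omegabar$. If $\{u_i\}$ concentrated at some $x_0\in\Omegabar$, then $u_i\rightharpoonup 0$ in $W^{1,n}(\Omega)$ and, by the classical Moser–Trudinger inequality (finiteness of $J_{\Omega}^{\sup}$), the quantities $\int_{\Omega}(e^{\alpha_n|u_i|^{n/(n-1)}}-1)$ stay bounded; but concentration forces essentially all the mass of $e^{\alpha|u_i|^{n/(n-1)}}-1$ into a shrinking ball $B_{\epsilon}(x_0)$, on which the weight contributes only a vanishing amount of measure — more precisely, one shows $F_{\Omega}(u_i)\to 0$, using $u_i\to 0$ in every $L^q$ (see \eqref{eq:properties of concentrating sequences}) together with equi-integrability away from $x_0$ exactly as in Lemma \ref{lemma:compactness in the interior}. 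Since $F_{\Omega}^{\sup}>0$ (it dominates $F_{\Omega}(v)$ for any fixed $v\not\equiv 0$), concentration is impossible and alternative (b) must hold.

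For case (ii), $\alpha/\alpha_n+\beta/n<1$: here the inequality is strictly subcritical. If $\{u_i\}$ concentrated, then again $u_i\rightharpoonup 0$. Choose $\theta\in(0,1)$ with $\theta$ close to $1$ but $\theta^{n/(n-1)}\alpha/\alpha_n+\beta/n<1$ still strict — actually the cleaner route is: since $\|\nabla u_i\|_{L^n}\leq 1$ and $u_i\rightharpoonup 0$, apply Lemma \ref{lemma:compactness in the interior} with $u=0$ after noting that in the strictly subcritical regime the Hölder argument there goes through with $\eta=1$ (one can afford $r=\alpha_n/\alpha>1$ directly because $1/s=1-\alpha/\alpha_n>\beta/n$), yielding $F_{\Omega}(u_i)\to F_{\Omega}(0)=0$. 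This again contradicts $F_{\Omega}^{\sup}>0$, so alternative (b) holds and $u$ is an extremal. I expect the only mildly delicate point to be the careful verification that $F_{\Omega}(u_i)\to 0$ for concentrating sequences in each case — this amounts to splitting the integral into $B_{\epsilon}(x_0)$ and its complement, using the classical Moser–Trudinger bound plus boundedness (case i) or integrability with room to spare in the Hölder exponents (case ii) on the small ball, and interior compactness (Lemma \ref{lemma:compactness in the interior}) on the complement — but since the paper explicitly states the argument is identical to the two-dimensional case in \cite{Csato Roy Calc var Pde}, I would simply carry over that reasoning and refer to it.
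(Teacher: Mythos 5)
Your high-level approach matches the paper's: the paper does not write out a proof but simply refers to the identical argument in the two-dimensional reference \cite{Csato Roy Calc var Pde}, which is precisely the concentration--compactness dichotomy (Theorem \ref{theorem:concentration alternative for singular moser trudinger}) followed by ruling out the concentration alternative by showing $F_\Omega(u_i)\to 0$. Two details deserve attention.

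First, Lemma \ref{lemma:compactness in the interior} does not apply to a concentrating sequence as stated, since its hypothesis $\limsup_i\|\nabla u_i\|_{L^n}\leq\eta<1$ fails (the norm tends to $1$). You note this correctly for case (ii), observing that the H\"older exponents close up because the inequality $\alpha/\alpha_n+\beta/n<1$ is strict. For case (i) the cleanest route is likewise a \emph{global} H\"older/Vitali argument rather than the ball/complement split you sketch: since $0\notin\Omegabar$ the weight $|x|^{-\beta}$ is bounded on $\Omegabar$, and if $\beta>0$ then the normalization $\alpha/\alpha_n+\beta/n\leq 1$ forces $\alpha<\alpha_n$, so for every measurable $E\subset\Omega$ one has
$\int_E e^{\alpha u_i^{n/(n-1)}}\leq |E|^{1-\alpha/\alpha_n}\big(\int_\Omega e^{\alpha_n u_i^{n/(n-1)}}\big)^{\alpha/\alpha_n}$,
which together with the classical Moser--Trudinger bound gives equi-integrability directly; Vitali plus $u_i\to 0$ a.e.\ then yields $F_\Omega(u_i)\to 0$. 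There is no need to treat $B_\epsilon(x_0)$ and its complement separately, and on the ball itself Lemma \ref{lemma:compactness in the interior} would not be available.

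Second, the argument above hinges on $\alpha<\alpha_n$, hence on $\beta>0$. When $\beta=0$ in case (i), the weight is identically $1$, the location of the origin is immaterial, $\alpha=\alpha_n$ is permitted, and the Vitali argument fails because the concentration level $J_\Omega^\delta(x_0)$ is strictly positive --- this is precisely the classical Moser--Trudinger extremal problem, which requires the full machinery of the paper, not the easy compactness argument. Your heuristic ``the weight contributes only a vanishing amount of measure'' is misleading for exactly this reason: the ball shrinks, but at criticality the integrand blows up at a compensating rate. So case (i) is only ``easy'' under the additional (and, under the running normalization, equivalent) assumption $\beta>0$, equivalently $\alpha<\alpha_n$; this restriction should be made explicit in your write-up, as it presumably is in the source the paper cites.
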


\section{The Case $\Omega=B_1(0)$.}\label{section ball}

In this section we deal with the case where $\Omega$ is the unit ball. The following lemma is essentailly due to Adimurthi-Sandeep \cite{Adi-Sandeep}. There one can find a proof, which is similar to the $2$-dimensional case, see also \cite{Csato Roy Calc var Pde}.

\begin{lemma}
\label{lemma:properties of T_a}
Let $0<a<\infty,$ and $u$ be radial function on $B_1$. Define
$$
  T_a(u)(x)=a^{\frac{n-1}{n}}u\left(|x|^{\frac{1}{a}}\right).
$$
Then $T_a$ satisfies that 
$$
  T_a:W_{0,rad}^{1,n}(B_1)\to W_{0,rad}^{1,n}(B_1).
$$ 
$T_a$ is invertible with $(T_a)^{-1}=T_{1/a}$ and it satisfies
$$
  \|\nabla(T_a(u))\|_{L^n}=\|\nabla u\|_{L^n},\quad \forall\,u\in W_{0,rad}^{1,n}(B_1).
$$
Moreover if $a=1-\frac{\beta}{n},$ then
\begin{equation}\label{lemma:eq: transformation Adi-Sandeep}
  F_{B_1}(u)=\frac{1}{a}J_{B_1}(T_a(u)),\quad\forall\,u\in W_{0,rad}^{1,n}(B_1).
\end{equation}
\end{lemma}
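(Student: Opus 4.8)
The plan is to verify the three claimed properties of $T_a$ by direct computation, using the change of variables $|x| = \rho$ and the substitution $\rho = s^{1/a}$ (equivalently $s = \rho^a$), which is the natural one given the definition $T_a(u)(x) = a^{(n-1)/n} u(|x|^{1/a})$.

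First I would record the mapping and invertibility statements. For $u$ radial on $B_1$ with $u(1)=0$ (in the trace sense), write $v = T_a(u)$, so $v(\rho) = a^{(n-1)/n} u(\rho^{1/a})$. Since $\rho \in (0,1)$ maps to $\rho^{1/a}\in(0,1)$ bijectively and $\rho \to 1$ gives $\rho^{1/a}\to 1$, we get $v(1)=0$; radial symmetry is preserved trivially. The composition $T_{1/a}(T_a(u))(x) = (1/a)^{(n-1)/n} a^{(n-1)/n} u\big((|x|^{1/a})^{1/(1/a)}\big) = u(|x|)$, giving $(T_a)^{-1}=T_{1/a}$; I would note that this formal computation is justified on the dense subset $C^\infty_{c,rad}(B_1)$ and extends by the norm identity below. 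Next, the Dirichlet norm: using radial coordinates, $\|\nabla u\|_{L^n(B_1)}^n = \omega_{n-1}\int_0^1 |u'(\rho)|^n \rho^{n-1}\,d\rho$. Writing $v(\rho) = a^{(n-1)/n} u(\rho^{1/a})$ gives $v'(\rho) = a^{(n-1)/n}\cdot \frac{1}{a}\rho^{1/a-1} u'(\rho^{1/a}) = a^{-1/n}\rho^{1/a-1}u'(\rho^{1/a})$, hence $|v'(\rho)|^n \rho^{n-1} = a^{-1}|u'(\rho^{1/a})|^n \rho^{n/a - 1}$. Substituting $s = \rho^{1/a}$, $ds = \frac{1}{a}\rho^{1/a - 1}d\rho$, i.e. $d\rho = a s^{a-1} ds$ and $\rho^{n/a-1} = s^{n-a}$, one finds $a^{-1} s^{(n-a)}\cdot a s^{a-1}\,ds = s^{n-1}\,ds$ times $|u'(s)|^n$, so the integral is exactly $\int_0^1 |u'(s)|^n s^{n-1}\,ds$. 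This proves $\|\nabla(T_a u)\|_{L^n} = \|\nabla u\|_{L^n}$ (and, incidentally, confirms that $T_a$ maps $W^{1,n}_{0,rad}(B_1)$ into itself, with density of smooth functions handling the general case).

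Finally, for $a = 1 - \beta/n$ I would prove the key identity \eqref{lemma:eq: transformation Adi-Sandeep}. Compute $F_{B_1}(u) = \omega_{n-1}\int_0^1 \frac{e^{\alpha |u(\rho)|^{n/(n-1)}} - 1}{\rho^\beta}\rho^{n-1}\,d\rho = \omega_{n-1}\int_0^1 \big(e^{\alpha |u(\rho)|^{n/(n-1)}} - 1\big)\rho^{n-1-\beta}\,d\rho$. Now substitute $\rho = s^{1/a}$ (so that $v(s) := T_a(u)(s) = a^{(n-1)/n}u(s^{1/a})$, i.e. $u(\rho) = u(s^{1/a}) = a^{-(n-1)/n} v(s)$). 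Then $\alpha |u(\rho)|^{n/(n-1)} = \alpha a^{-1}|v(s)|^{n/(n-1)} = \alpha_n |v(s)|^{n/(n-1)}$ precisely when $\alpha/a = \alpha_n$; one should check that the hypotheses indeed allow this — actually the claim of the lemma is the algebraic identity with the integrals, and the relevant point is just that $\alpha a^{-1} = \alpha a^{-1}$, so I would keep the exponent as $\alpha a^{-1}$ and observe it equals $\alpha_n$ exactly under $\alpha/\alpha_n + \beta/n = 1$ (the interesting case); in general one simply tracks the constant. Also $d\rho = \frac{1}{a} s^{1/a - 1}\,ds$ and $\rho^{n-1-\beta} = s^{(n-1-\beta)/a} = s^{(n-\beta)/a - 1/a} = s^{n - 1/a}$ using $n - \beta = na$, wait — more carefully $(n-1-\beta)/a = (na - 1)/a = n - 1/a$, so $\rho^{n-1-\beta}\,d\rho = \frac{1}{a} s^{n-1/a} s^{1/a-1}\,ds = \frac{1}{a} s^{n-1}\,ds$. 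Hence $F_{B_1}(u) = \frac{1}{a}\,\omega_{n-1}\int_0^1 \big(e^{\alpha a^{-1}|v(s)|^{n/(n-1)}} - 1\big) s^{n-1}\,ds = \frac{1}{a} J_{B_1}(T_a u)$ once $\alpha a^{-1} = \alpha_n$, i.e. once $\alpha = a\,\alpha_n = (1-\beta/n)\alpha_n$. (If $\alpha/\alpha_n + \beta/n < 1$ strictly, the displayed identity should be read with $J_{B_1}$'s constant $\alpha_n$ replaced by $\alpha/a$, or one restricts to the critical case where it is used; I would state precisely what the source intends.)

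I do not expect a genuine obstacle here: every step is an elementary one-dimensional change of variables after passing to radial coordinates. The only points requiring a word of care are (i) the trace/boundary-value statement $T_a u \in W^{1,n}_{0,rad}$, which I would handle by first treating $u \in C^\infty_{c,rad}(B_1)$ and then using the norm identity together with density; (ii) making sure the weight exponents combine correctly, which is exactly the computation $n - 1 - \beta = na - 1$ and $(n-1-\beta)/a = n - 1/a$ recorded above; and (iii) being precise about the constant $\alpha/a$ versus $\alpha_n$, which coincide exactly in the critical case $\alpha/\alpha_n + \beta/n = 1$ that is the one of interest. Since the lemma is attributed to Adimurthi--Sandeep and an analogous $2$-dimensional proof is cited, I would simply carry out the radial change of variables in full and remark that it is identical in spirit to \cite{Csato Roy Calc var Pde}.
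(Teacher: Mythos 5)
Your computation is correct and is essentially the same radial change of variables $\rho=s^{1/a}$ that the paper (and its references \cite{Adi-Sandeep}, \cite{Csato Roy Calc var Pde}) uses; the only cosmetic difference is that you carry the ``$-1$'' through the substitution directly, whereas the paper's version adds $\int_{B_1}|x|^{-\beta}\,dx$ to both sides, substitutes, and then uses $\int_{B_1}|x|^{-\beta}\,dx=|B_1|/a$ to cancel it. Your remark about the constant is accurate and worth keeping in mind: with $a=1-\beta/n$, the substitution produces $e^{(\alpha/a)|v|^{n/(n-1)}}$, so the stated identity $F_{B_1}(u)=\tfrac1aJ_{B_1}(T_au)$ (with $J_{B_1}$ defined via $\alpha_n$) holds precisely when $\alpha/a=\alpha_n$, i.e.\ under the critical relation $\alpha/\alpha_n+\beta/n=1$; the paper lives under the standing hypothesis $\alpha/\alpha_n+\beta/n\leq1$ but only invokes this lemma in the critical regime (the subcritical case having been disposed of by Proposition \ref{proposition:sup attained if 0 notin Omegabar}), so the statement is correct in context even if it would have been cleaner to say so explicitly.
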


The following corollary follows easily from Lemma \ref{lemma:properties of T_a}.

\begin{corollary}
\label{corollary:supremums for rad on D for F and G are same}
Let $a=1-\beta/n.$ Then the following identities hold true
$$
  \sup_{u\in \WBDiskrad}F_{B_1}(u)=\frac{1}{a}\sup_{u\in \WBDiskrad}J_{B_1}(u),
$$
and
$$
  F_{B_1}^{\text{sup}}=\frac{1}{a}J_{B_1}^{\text{sup}}.
$$
\end{corollary}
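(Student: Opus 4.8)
The plan is to deduce both equalities formally from Lemma~\ref{lemma:properties of T_a} together with Schwarz symmetrization. Note first that since $\beta\in[0,n)$ we have $a=1-\beta/n\in(0,1]$, so $1/a$ is well defined.

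For the radial identity, I would use that $T_a$ is a norm-preserving bijection. By Lemma~\ref{lemma:properties of T_a}, $T_a\colon W^{1,n}_{0,rad}(B_1)\to W^{1,n}_{0,rad}(B_1)$ is invertible with inverse $T_{1/a}$ and satisfies $\|\nabla(T_a u)\|_{L^n}=\|\nabla u\|_{L^n}$. Hence the constraint $\|\nabla u\|_{L^n}\leq 1$ is preserved in both directions, so $T_a$ restricts to a bijection of $\WBDiskrad$ onto itself. Applying the identity $F_{B_1}(u)=\tfrac1a J_{B_1}(T_a(u))$ and then substituting $v=T_a(u)$, which ranges over all of $\WBDiskrad$, gives
$$
  \sup_{u\in\WBDiskrad}F_{B_1}(u)=\frac1a\sup_{u\in\WBDiskrad}J_{B_1}(T_a(u))=\frac1a\sup_{v\in\WBDiskrad}J_{B_1}(v),
$$
which is the first claimed equality.

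For the second identity, I would first reduce both $F_{B_1}^{\text{sup}}$ and $J_{B_1}^{\text{sup}}$ to suprema over radial functions. For any $u\in\mathcal{B}_1(B_1)$, its Schwarz symmetrization $u^{\ast}\in W^{1,n}_{0,rad\searrow}(B_1)$ satisfies $\|\nabla u^{\ast}\|_{L^n}\leq\|\nabla u\|_{L^n}\leq 1$, so $u^{\ast}\in\WBDiskrad$, and moreover $F_{B_1}(u)\leq F_{B_1}(u^{\ast})$ and $J_{B_1}(u)\leq J_{B_1}(u^{\ast})$ by the monotonicity properties of the Schwarz rearrangement recalled in Section~\ref{section:notation}. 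Consequently
$$
  F_{B_1}^{\text{sup}}=\sup_{u\in\WBDiskrad}F_{B_1}(u),\qquad J_{B_1}^{\text{sup}}=\sup_{u\in\WBDiskrad}J_{B_1}(u),
$$
and combining these with the radial identity yields $F_{B_1}^{\text{sup}}=\tfrac1a J_{B_1}^{\text{sup}}$.

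There is no real obstacle here, since the statement is a purely formal consequence of Lemma~\ref{lemma:properties of T_a}. The only points that deserve a line of care are (i) verifying that $T_a$ maps the constrained class $\WBDiskrad$ \emph{onto} itself, which follows from invertibility together with the norm identity, and (ii) invoking the correct directions of the Hardy--Littlewood and P\'olya--Szeg\H{o} inequalities so that symmetrization does not decrease $F_{B_1}$ or $J_{B_1}$ while not increasing the Dirichlet norm --- exactly the properties of Schwarz symmetrization quoted in the Notations section.
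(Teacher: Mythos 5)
Your proof is correct and follows essentially the same route as the paper: the radial identity is a direct consequence of Lemma~\ref{lemma:properties of T_a} via the norm-preserving bijection $T_a$ on $\WBDiskrad$, and the unconstrained identity is reduced to the radial one by Schwarz symmetrization (which the paper phrases as the two equalities being equivalent). Your write-up simply makes explicit the two small points — surjectivity of $T_a$ on the constrained class, and the correct monotonicity directions of Hardy--Littlewood and P\'olya--Szeg\H{o} — that the paper leaves implicit.
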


\begin{proof}
The first equality follows directly from Lemma \ref{lemma:properties of T_a}. By Schwarz symmetrization, the two equalities of the corollary are  equivalent.
\end{proof}
\smallskip

One of the crucial ingredients of the proof is the following result of Carleson and Chang \cite{Carleson-Chang}. Essential is the strict inequality in the following theorem. The second equality is an immediate consequence of the properties of Schwarz symmetrization.

\begin{theorem}[Carleson-Chang]
\label{theorem:Carleson and Chang strict inequality}
The following strict inequality holds true
$$
  J_{{B_1},rad\searrow}^{\delta}(0)<J_{B_1,rad\searrow}^{\sup}=J^{\sup}_{B_1}\,.
$$
\end{theorem}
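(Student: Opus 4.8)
\textbf{Proof strategy for Theorem \ref{theorem:Carleson and Chang strict inequality}.}

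The plan is to reduce everything to the one-dimensional variational problem that Carleson and Chang solved. The equality $J_{B_1,rad\searrow}^{\sup}=J_{B_1}^{\sup}$ is the easy part: by Schwarz symmetrization, for any $u\in\mathcal B_1(B_1)$ we have $u^\ast\in W^{1,n}_{0,rad\searrow}(B_1)$, $\|\nabla u^\ast\|_{L^n(B_1)}\le\|\nabla u\|_{L^n(B_1)}\le 1$, and $J_{B_1}(u)\le J_{B_1}(u^\ast)$ (since $e^{\alpha_n t^{n/(n-1)}}-1$ is increasing in $|t|$ and the rearrangement preserves the Lebesgue integral of monotone functions of $|u|$). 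Hence the supremum over radially decreasing functions already sees every value of $J_{B_1}$, giving one inequality, and the reverse is trivial because $W^{1,n}_{0,rad\searrow}(B_1)\cap\mathcal B_1(B_1)\subset\mathcal B_1(B_1)$.

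For the strict inequality $J_{B_1,rad\searrow}^\delta(0)<J_{B_1,rad\searrow}^{\sup}$, I would first translate the radial problem into Carleson--Chang's form via the standard change of variables. For a radially decreasing $u=u(r)\in W^{1,n}_{0,rad}(B_1)$, set $t=-\log r\in[0,\infty)$ and $\phi(t)=\omega_{n-1}^{1/n}\,u(e^{-t})$; then a direct computation gives $\int_{B_1}|\nabla u|^n=\int_0^\infty|\phi'(t)|^n\,dt$ and $\int_{B_1}\bigl(e^{\alpha_n u^{n/(n-1)}}-1\bigr)\,dx=\omega_{n-1}\int_0^\infty\bigl(e^{n\,\phi(t)^{n/(n-1)}-nt}-e^{-nt}\bigr)\,dt$ after noting $\alpha_n=n\omega_{n-1}^{1/(n-1)}$. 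Thus $J_{B_1,rad\searrow}^{\sup}=\omega_{n-1}\sup\{\int_0^\infty(e^{n\phi^{n/(n-1)}-nt}-e^{-nt})\,dt:\ \phi(0)=0,\ \int_0^\infty|\phi'|^n\le 1,\ \phi\nearrow\}$, and concentration at $0$ in $B_1$ corresponds exactly to concentration at $t=\infty$ (i.e. $\phi(t)\to\infty$ with the mass of $|\phi'|^n$ escaping to infinity). This is precisely the functional treated in Carleson--Chang \cite{Carleson-Chang}, where it is shown that the limiting value along concentrating sequences is $\omega_{n-1}(1+e^{\sum_{k=1}^{n-1}1/k})/n$ — wait, I should not commit to the exact constant here; the point is only that Carleson--Chang prove the \emph{strict} inequality between the concentration level and the supremum, and that the supremum is attained by a genuine function rather than approached only by concentration.

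The main obstacle — and the only real content — is therefore to verify that the reduction above is faithful: that radially decreasing maximizing sequences for $J_{B_1}$ with the constraint $\|\nabla u\|_{L^n}\le 1$ correspond bijectively (modulo the harmless replacement of $\le 1$ by $=1$, which is free since $J$ is monotone under scaling up) to the admissible class in Carleson--Chang's theorem, and that the notion of "concentrates at $0$" defined in Section \ref{section:notation} matches the notion of concentration used by Carleson--Chang. For the latter: if $\{u_i\}$ concentrates at $0$ then $\int_{B_1\setminus B_\epsilon}|\nabla u_i|^n\to 0$ for every $\epsilon$, which under $t=-\log r$ says the measure $|\phi_i'|^n\,dt$ puts all its mass in $[-\log\epsilon,\infty)=[t_\epsilon,\infty)$ in the limit, i.e. it escapes to $t=\infty$; conversely any sequence whose $L^n$-energy escapes to $t=\infty$ pulls back to a sequence concentrating at the origin. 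Once this dictionary is in place, Theorem \ref{theorem:Carleson and Chang strict inequality} is exactly the main theorem of \cite{Carleson-Chang} up to the factor $\omega_{n-1}$ and the coordinate change, so I would state it as such and refer to that paper for the delicate blow-up estimate that produces the strict gap.
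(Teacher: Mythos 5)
Your proposal is correct and matches the paper's approach: the paper itself simply cites Carleson and Chang for the strict inequality and notes that $J^{\sup}_{B_1,rad\searrow}=J^{\sup}_{B_1}$ follows at once from Schwarz symmetrization, and your explicit Moser substitution $t=-\log r$, $\phi(t)=\omega_{n-1}^{1/n}\,u(e^{-t})$ (giving $\int_0^\infty|\phi'|^n\,dt=\int_{B_1}|\nabla u|^n$ and $J_{B_1}(u)=\omega_{n-1}\int_0^\infty\bigl(e^{n\phi^{n/(n-1)}-nt}-e^{-nt}\bigr)\,dt$) is precisely the standard unpacking of that citation. For the record, with the constant $1$ subtracted in $J_{B_1}$ the radial concentration level at $0$ is $e^{1+\frac12+\cdots+\frac1{n-1}}|B_1|$ (not $\bigl(1+e^{\cdots}\bigr)|B_1|$), as the paper's Remark following the theorem states.
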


\begin{remark}
The result in Carleson and Chang is acutally more precise, stating that
$$
  e^{1 + \frac{1}{2}+ \cdots + \frac{1}{n-1}}|B_1|=\sup_{x\in \overline{{B_1}}}J_{{B_1},rad\searrow}^{\delta}(x)<J_{B_1,rad\searrow}^{\sup}\,,
$$
but for our purpose we only need an estimate for the concentration level at $0.$ 
\end{remark}                    

From Lemma \ref{lemma:properties of T_a} and Theorem \ref{theorem:Carleson and Chang strict inequality} we easily deduce the following proposition.

\begin{lemma}
\label{lemma:strict ineq. between concentration level and supremum for sing. MT on disk}
Let $\{u_i\}\subset\mathcal{B}_1(B_1)$ be a sequence which concentrates at $0.$
If  $\{u_i^{\ast}\}$ also concentrates at $0,$ then the following strict inequality holds true
$$
  \limsup_{i\to\infty}F_{{B_1}}(u_i)<F_{B_1}^{\sup}\,.
$$
\end{lemma}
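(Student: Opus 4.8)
The plan is to reduce the statement for $F_{B_1}$ to the corresponding strict inequality for $J_{B_1}$ contained in Theorem~\ref{theorem:Carleson and Chang strict inequality}, using the transformation $T_a$ from Lemma~\ref{lemma:properties of T_a} with $a = 1-\beta/n$. First I would observe that, by Lemma~\ref{lemma new:sup over W12 same as over Cinfty} and the properties of Schwarz symmetrization (in particular $F_{B_1}(u) \le F_{B_1}(u^{\ast})$ and $\|\nabla u^{\ast}\|_{L^n} \le \|\nabla u\|_{L^n}$, together with Proposition~\ref{prop:Hardy littlewoood modified}~(ii) controlling concentration), it suffices to treat a sequence $\{u_i\}$ which is already radially symmetric, nonnegative and radially decreasing, with $u_i = u_i^{\ast}$ concentrating at $0$; passing to $u_i^{\ast}$ only increases $F_{B_1}(u_i)$ and, by hypothesis, preserves concentration at $0$.

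Next I would set $v_i = T_a(u_i)$. By Lemma~\ref{lemma:properties of T_a}, $v_i \in W^{1,n}_{0,rad}(B_1)$, $\|\nabla v_i\|_{L^n} = \|\nabla u_i\|_{L^n} \le 1$, and the key identity $F_{B_1}(u_i) = \tfrac1a J_{B_1}(v_i)$ holds. Since $T_a$ preserves radial monotonicity (it is a monotone reparametrization of the radial variable), $v_i = v_i^{\ast}$ is still radially decreasing. The step that needs a short argument is that $\{v_i\}$ still concentrates at $0$: because $T_a$ is an isometry on the Dirichlet energy and acts by the radial change of variables $r \mapsto r^{1/a}$, the Dirichlet energy of $v_i$ outside $B_\epsilon$ equals that of $u_i$ outside $B_{\epsilon^a}$, which tends to $0$ as $i\to\infty$ for every fixed $\epsilon>0$; combined with $\|\nabla v_i\|_{L^n}\to 1$ this gives concentration of $\{v_i\}$ at $0$ in the radial-decreasing class.

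Now I would apply Theorem~\ref{theorem:Carleson and Chang strict inequality}: since $\{v_i\}\subset W^{1,n}_{0,rad\searrow}(B_1)\cap\mathcal{B}_1(B_1)$ concentrates at $0$, we have $\limsup_{i\to\infty} J_{B_1}(v_i) \le J^{\delta}_{B_1,rad\searrow}(0) < J^{\sup}_{B_1}$. Dividing by $a$ and using the identity above together with Corollary~\ref{corollary:supremums for rad on D for F and G are same} (which gives $F^{\sup}_{B_1} = \tfrac1a J^{\sup}_{B_1}$), we obtain
$$
  \limsup_{i\to\infty} F_{B_1}(u_i^{\ast}) = \frac1a \limsup_{i\to\infty} J_{B_1}(v_i) \le \frac1a J^{\delta}_{B_1,rad\searrow}(0) < \frac1a J^{\sup}_{B_1} = F^{\sup}_{B_1},
$$
and since $F_{B_1}(u_i) \le F_{B_1}(u_i^{\ast})$ this yields $\limsup_{i\to\infty} F_{B_1}(u_i) < F^{\sup}_{B_1}$, as claimed. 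The main obstacle is the bookkeeping that concentration (and radial monotonicity) is genuinely preserved under $T_a$ and under symmetrization — everything else is a direct substitution into the Carleson–Chang inequality.
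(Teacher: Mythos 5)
Your argument is correct and is essentially the paper's own proof: the paper simply refers to the two-dimensional version in \cite{Csato Roy Calc var Pde} with $a=1-\beta/n$, and that proof is exactly the reduction via Schwarz symmetrization and $T_a$ to the Carleson--Chang strict inequality that you carry out. One small slip: under $T_a$ the radial change of variables is $r=s^a$, so the Dirichlet energy of $v_i$ outside $B_\epsilon$ equals that of $u_i$ outside $B_{\epsilon^{1/a}}$, not $B_{\epsilon^a}$ -- but since it is still a fixed ball the concentration argument goes through unchanged.
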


\begin{proof}The proof is identical to the $2$-dimensional case, see \cite{Csato Roy Calc var Pde}, with the only difference that one sets
 $a=1-\beta/n.$
\end{proof}
\smallskip

A consequence of Lemma \ref{lemma:strict ineq. between concentration level and supremum for sing. MT on disk} is the following theorem, stating that the supremum of $F_{B_1}$ is attained. The proof is here also identical to the $2$-dimensional case.

\begin{theorem}
\label{theorem:supremum of FOmega on Ball} The following strict inequality holds
$$
  F_{B_1}^{\delta}(0)<F_{B_1}^{\sup}.
$$
In particular there exists $u\in\mathcal{B}_1(B_1)$ such that $F_{B_1}^{\sup}=F_{B_1}(u).$
\end{theorem}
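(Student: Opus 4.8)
The plan is to establish the two assertions in order: first the strict inequality $F_{B_1}^{\delta}(0)<F_{B_1}^{\sup}$, and then to deduce attainment from it. We may assume $\alpha/\alpha_n+\beta/n=1$: if the inequality in \eqref{intro:eq:alpha and beta sum} is strict, Proposition~\ref{proposition:sup attained if 0 notin Omegabar}(ii) already produces the extremal and there is nothing to prove. Under the equality we have $a:=1-\beta/n=\alpha/\alpha_n\in(0,1]$, which is exactly the value for which Lemma~\ref{lemma:properties of T_a} gives $F_{B_1}(u)=\frac{1}{a}J_{B_1}(T_a u)$ on radial functions, and which appears in Corollary~\ref{corollary:supremums for rad on D for F and G are same}.

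For the strict inequality I would first reduce the concentration level to the radial decreasing class, i.e. show $F_{B_1}^{\delta}(0)\le F_{B_1,rad\searrow}^{\delta}(0)$. Given $\{u_i\}\subset\mathcal{B}_1(B_1)$ concentrating at $0$, pass to a subsequence realizing $\limsup_i F_{B_1}(u_i)$ and consider the Schwarz rearrangements $u_i^{\ast}\in W^{1,n}_{0,rad\searrow}(B_1)\cap\mathcal{B}_1(B_1)$, which satisfy $F_{B_1}(u_i)\le F_{B_1}(u_i^{\ast})$. Apply the Concentration--Compactness Alternative (Theorem~\ref{theorem:concentration alternative for singular moser trudinger}) to $\{u_i^{\ast}\}$, passing to a further subsequence. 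If $\{u_i^{\ast}\}$ concentrates --- necessarily at the origin, since each $|\nabla u_i^{\ast}|^n\,dx$ is rotation invariant --- then $\limsup_i F_{B_1}(u_i)\le\limsup_i F_{B_1}(u_i^{\ast})\le F_{B_1,rad\searrow}^{\delta}(0)$ by definition of the latter. Otherwise $u_i^{\ast}\rightharpoonup w$ in $W^{1,n}(B_1)$ with $F_{B_1}(u_i^{\ast})\to F_{B_1}(w)$; but $u_i\rightharpoonup 0$ since $\{u_i\}$ concentrates, hence $u_i\to 0$ and therefore $u_i^{\ast}\to 0$ in every $L^q$, so $w=0$ and $\limsup_i F_{B_1}(u_i)\le F_{B_1}(0)=0$. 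In either case $\limsup_i F_{B_1}(u_i)\le F_{B_1,rad\searrow}^{\delta}(0)$, and the supremum over such $\{u_i\}$ gives $F_{B_1}^{\delta}(0)\le F_{B_1,rad\searrow}^{\delta}(0)$. Next, $T_a$ is a norm-preserving bijection of $W^{1,n}_{0,rad\searrow}(B_1)$, and the change of variables $r=s^{1/a}$ shows that $T_a$ and $T_{1/a}$ carry sequences concentrating at $0$ to sequences concentrating at $0$; with \eqref{lemma:eq: transformation Adi-Sandeep} this yields $F_{B_1,rad\searrow}^{\delta}(0)=\frac{1}{a}J_{B_1,rad\searrow}^{\delta}(0)$, in the same way that Corollary~\ref{corollary:supremums for rad on D for F and G are same} is obtained. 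Finally Carleson--Chang (Theorem~\ref{theorem:Carleson and Chang strict inequality}) supplies the strict gap $J_{B_1,rad\searrow}^{\delta}(0)<J_{B_1,rad\searrow}^{\sup}=J_{B_1}^{\sup}$, and Corollary~\ref{corollary:supremums for rad on D for F and G are same} turns the chain into
$$F_{B_1}^{\delta}(0)\le F_{B_1,rad\searrow}^{\delta}(0)=\frac{1}{a}J_{B_1,rad\searrow}^{\delta}(0)<\frac{1}{a}J_{B_1}^{\sup}=F_{B_1}^{\sup}.$$

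For attainment, let $\{u_i\}\subset\mathcal{B}_1(B_1)$ be a maximizing sequence for $F_{B_1}^{\sup}$. By Lemma~\ref{lemma new:sup over W12 same as over Cinfty} we may take $u_i\in C_c^{\infty}(B_1)$ with $u_i\ge 0$, and replacing each $u_i$ by $u_i^{\ast}$ (which does not decrease $F_{B_1}$ nor increase $\|\nabla\cdot\|_{L^n}$, hence preserves the maximizing property) we may assume $u_i\in W^{1,n}_{0,rad\searrow}(B_1)$. By Theorem~\ref{theorem:concentration alternative for singular moser trudinger}, either $\{u_i\}$ concentrates at a point of $\overline{B_1}$ --- which, being a radial sequence, must be $0$ --- or $u_i\rightharpoonup u$ in $W^{1,n}(B_1)$ with $F_{B_1}(u_i)\to F_{B_1}(u)$. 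The first possibility is excluded, for it would give $F_{B_1}^{\sup}=\lim_i F_{B_1}(u_i)\le F_{B_1}^{\delta}(0)<F_{B_1}^{\sup}$ by the strict inequality just proved (equivalently by Lemma~\ref{lemma:strict ineq. between concentration level and supremum for sing. MT on disk}, since here $u_i^{\ast}=u_i$ also concentrates at $0$). Hence the second alternative holds; by weak lower semicontinuity of the Dirichlet norm $\|\nabla u\|_{L^n}\le\liminf_i\|\nabla u_i\|_{L^n}\le 1$, so $u\in\mathcal{B}_1(B_1)$, and $F_{B_1}(u)=\lim_i F_{B_1}(u_i)=F_{B_1}^{\sup}$; thus $u$ is the required extremal, and $u\not\equiv 0$ since $F_{B_1}^{\sup}\ge F_{B_1}^{\delta}(0)>0$ by Remark~\ref{remark:F delta zero is not zero}.

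The main obstacle is the first step. The per-sequence strict inequality of Lemma~\ref{lemma:strict ineq. between concentration level and supremum for sing. MT on disk} does not by itself give $F_{B_1}^{\delta}(0)<F_{B_1}^{\sup}$, since a supremum of numbers each strictly below $F_{B_1}^{\sup}$ may still equal it; one genuinely needs the \emph{uniform} gap coming from the Carleson--Chang strict inequality at the level of the suprema, and this forces the detour through the radial decreasing class via Schwarz symmetrization, together with the (routine but necessary) verification that $T_a$ respects concentration at the origin. The delicate bookkeeping is the case split for $\{u_i^{\ast}\}$ in the Concentration--Compactness Alternative, which relies on the facts that a radial sequence can only concentrate at the origin and that a concentrating sequence converges to $0$ in every $L^q$.
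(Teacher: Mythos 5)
Your proof is correct and follows the paper's approach: radialize via Schwarz symmetrization, transfer to the functional $J_{B_1}$ through the transformation $T_a$, and invoke the Carleson--Chang strict inequality $J_{B_1,rad\searrow}^{\delta}(0)<J_{B_1}^{\sup}$. Your observation that Lemma~\ref{lemma:strict ineq. between concentration level and supremum for sing. MT on disk} as stated gives only a per-sequence bound is well taken, but the proof of that lemma in fact produces the uniform bound $\frac{1}{a}J_{B_1,rad\searrow}^{\delta}(0)$, which is exactly the chain you spell out; your explicit treatment of the case where $\{u_i^{\ast}\}$ fails to concentrate (forcing $w=0$ and hence a zero limiting value) is the correct complement to make the supremum-level statement airtight, and matches what the cited two-dimensional argument does.
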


\section{Ball to Domain Construction}\label{section:ball to domain}

In view of Proposition \ref{proposition:sup attained if 0 notin Omegabar}, it remains to prove Theorem \ref{theorem:intro:Extremal for Singular Moser-Trudinger} for general domain with $0\in\Omega,$ when $\alpha/\alpha_n+\beta/n=1,$ and we can also take $\beta>0.$
Hence from now on we always assume that we are in this case. In addition, we assume in this section and Section \ref{section:domain to ball} that $0\in\Omega.$ The ball to domain construction is given by the following defnition: for $v\in W^{1,n}_{0,rad}(B_1)$  and $x\in\Omega,$ define $P_x(v)=u:\Omega\backslash\{x\}\to \re$ by
\begin{equation*}
  P_x(v)(y)=v\left(e^{-\omega_{n-1}^\frac{1}{n-1} G_{\Omega,x}(y)}\right)=v\left(\left(G_{B_1,0}\right)^{-1}(G_{\Omega,x}(y)\right),
\end{equation*}
where, by abuse of notation, we have identified $v$ and $G_{B_1,0}$ with the corresponding radial function, for instance:
$$
  G_{B_1,0}(z)=-\frac{1}{\cnn}\log z, \quad\text{ if }z\in (0,1].
$$

The main result of this section is the following theorem.

\begin{theorem}
\label{theorem:ball to general domain:sup inequality} Assume $\Omega\subset\re^n$ is a bounded open smooth set with $0\in\Omega.$
For any  $v\in W^{1,n}_{0,rad}(B_1)\cap \mathcal{B}_1(B_1)$ define $u=P_0(v).$ Then $u\in   \mathcal{B}_1(\Omega)$ and it satisfies
$$
  F_{\Omega}(u)\geq I_{\Omega}(0)^{n-\beta}F_{B_1}(v).
$$
In particular the following inequality holds true
$$
  F_{\Omega}^{\text{sup}}\geq I_{\Omega}(0)^{n-\beta}F^{\sup}_{B_1}.
$$
Moreover if $\{v_i\}\subset W_{0,rad}^{1,n}(B_1)$ concentrates at $0,$ then $u_i=P_0(v_i)$ concentrates at $0.$
\end{theorem}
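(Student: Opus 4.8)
The plan is to reduce all three assertions to one‑dimensional integrals over the level sets of $G=G_{\Omega,0}$. Identify $v$ with its radial profile, writing $v(z)=\phi(|z|)$ with $\phi\colon(0,1]\to\re$ absolutely continuous and $\phi(1)=0$; then $u=P_0(v)$ is $u(y)=\phi(e^{-\cnn G(y)})$, and on $\Omega\setminus\{0\}$, where $G$ is $C^{1,\alpha}_{loc}$, the chain rule gives
\[
  |\nabla u(y)|=\cnn\,\bigl|\phi'(e^{-\cnn G(y)})\bigr|\,e^{-\cnn G(y)}\,|\nabla G(y)|.
\]
Before using this one must check that $u$, a priori defined only pointwise off the origin, lies in $W^{1,n}_0(\Omega)$: first treat $v\in C^\infty_{c,rad}(B_1)$, where $u$ is bounded, $C^1$ on $\Omega\setminus\{0\}$, and has $\{u\neq0\}\subset\{G\geq t_0\}$ for some $t_0>0$, the latter set being a compact subset of $\Omega$; since a point has zero $W^{1,n}$-capacity, $u\in W^{1,n}_0(\Omega)$, and the general case follows by density in $W^{1,n}_{0,rad}(B_1)$ together with the gradient identity below.

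For the gradient identity, apply the coarea formula to $G$ and then Proposition \ref{proposition:properties of Green's function}(b):
\[
  \intomega|\nabla u|^n
  =\int_0^\infty \cnn^{\,n}\bigl|\phi'(e^{-\cnn t})\bigr|^n e^{-n\cnn t}\Bigl(\int_{\{G=t\}}|\nabla G|^{n-1}\dH\Bigr)dt
  =\int_0^\infty \cnn^{\,n}\bigl|\phi'(e^{-\cnn t})\bigr|^n e^{-n\cnn t}\,dt,
\]
and the substitution $s=e^{-\cnn t}$, using $\cnn^{\,n-1}=\omega_{n-1}$, turns the right side into $\omega_{n-1}\int_0^1|\phi'(s)|^n s^{n-1}\,ds=\int_{B_1}|\nabla v|^n$, so $\|\nabla u\|_{L^n(\Omega)}=\|\nabla v\|_{L^n(B_1)}\leq1$ and $u\in\WBOmega$. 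The same coarea–substitution scheme, applied to the functional with $f(t)=e^{\alpha|\phi(e^{-\cnn t})|^{n/(n-1)}}-1\geq0$, gives
\[
  F_\Omega(u)=\int_0^\infty f(t)\Bigl(\int_{\{G=t\}}\frac{|y|^{-\beta}}{|\nabla G(y)|}\dH(y)\Bigr)dt,\qquad
  F_{B_1}(v)=\omega_{n-1}^{n/(n-1)}\int_0^\infty f(t)\,e^{-\alpha_{n,\beta}t}\,dt.
\]
The decisive step is now Corollary \ref{corollary:Hypot H}: for a.e.\ $t\geq0$ the level set $\{G=t\}$ equals $\partial A_{e^{-\cnn t}}$ and $(e^{-\cnn t})^{n-\beta}=e^{-\alpha_{n,\beta}t}$, so the corollary yields $\int_{\{G=t\}}|y|^{-\beta}|\nabla G|^{-1}\dH\geq\omega_{n-1}^{n/(n-1)}I_\Omega(0)^{n-\beta}e^{-\alpha_{n,\beta}t}$ for a.e.\ $t$; multiplying by $f(t)\geq0$ and integrating gives $F_\Omega(u)\geq I_\Omega(0)^{n-\beta}F_{B_1}(v)$, hence also $F^{\sup}_\Omega\geq I_\Omega(0)^{n-\beta}F^{\sup}_{B_1}$ after taking a maximizing sequence for $F^{\sup}_{B_1}$ which, by Schwarz symmetrization, may be assumed in $W^{1,n}_{0,rad\searrow}(B_1)\cap\mathcal{B}_1(B_1)$.

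For the concentration claim: $\|\nabla u_i\|_{L^n}=\|\nabla v_i\|_{L^n}\to1$ is immediate; and given $\epsilon>0$, Proposition \ref{proposition:properties of Green's function}(e) shows $\{G>t\}\subset B_\epsilon(0)$ for all $t$ beyond some $T_\epsilon$, so $\{G=t\}\cap(\Omega\setminus B_\epsilon(0))=\emptyset$ for $t\geq T_\epsilon$; restricting the coarea computation of the gradient identity to $\Omega\setminus B_\epsilon(0)$ and then to $t\leq T_\epsilon$ bounds $\int_{\Omega\setminus B_\epsilon(0)}|\nabla u_i|^n$ by $\int_{B_1\setminus B_{\rho_\epsilon}(0)}|\nabla v_i|^n$ with $\rho_\epsilon=e^{-\cnn T_\epsilon}>0$, and this tends to $0$ since $\{v_i\}$ concentrates at $0$. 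In dimension $n=2$ these steps are routine and $G$ is harmonic; for $n\geq3$ the real obstacle is the low regularity of the $n$-Green's function — $G$ is merely $C^{1,\alpha}_{loc}$ away from its logarithmic pole and $\nabla H_{\Omega,0}$ is not known to be bounded near $0$ — so the work lies in making rigorous (i) the chain rule and the membership $P_0(v)\in W^{1,n}_0(\Omega)$, and (ii) the coarea formula applied to $G$, which tacitly uses that $\{\nabla G=0\}$ is Lebesgue‑null and that $\{G=t\}=\partial A_{e^{-\cnn t}}$ for a.e.\ $t$; these are precisely the $n$-Laplacian technicalities flagged in the introduction.
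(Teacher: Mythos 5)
Your proof is correct and follows essentially the same route as the paper: apply the coarea formula to $G_{\Omega,0}$, use Proposition \ref{proposition:properties of Green's function}(b) to get $\|\nabla u\|_{L^n}=\|\nabla v\|_{L^n}$, invoke Corollary \ref{corollary:Hypot H} on the level-set boundary integral in $F_\Omega(u)$, and deduce concentration from the inclusion $\{G>T_\epsilon\}\subset B_\epsilon$ supplied by Proposition \ref{proposition:properties of Green's function}(e). The only difference is cosmetic: you roll the content of Lemma \ref{lemma:ball to domain via G:preserves dirichlet norm} into the main argument and you flag explicitly (capacity argument for $u\in W^{1,n}_0$, non-degeneracy of $\nabla G$ for the coarea manipulation) two technical points which the paper uses silently.
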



The following lemma holds true for any domain, whether containing the origin or not. So we state this general version, although we will use it with $x=0.$

\begin{lemma}
\label{lemma:ball to domain via G:preserves dirichlet norm} Let $x\in\Omega$ and let $v\in W^{1,n}_{0,rad}(B_1).$
Then $P_x(v)\in W^{1,n}_0(\Omega)$ and in particular 
\begin{equation}\label{lemma:eq:nablu u is nabla Pu}
  \|\nabla (P_x(v))\|_{L^n(\Omega)}=\|\nabla v\|_{L^n(B_1)}.
\end{equation}
Moreover if $\{v_i\}\subset W_{0,rad}^{1,n}(B_1)$ concentrates at $0,$ then $P_x(v_i)$ concentrates at $x.$
\end{lemma}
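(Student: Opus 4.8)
\textbf{Proof strategy for Lemma \ref{lemma:ball to domain via G:preserves dirichlet norm}.}

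The plan is to compute $|\nabla(P_x(v))|$ pointwise on $\Omega\setminus\{x\}$ using the chain rule, integrate using the coarea formula for $G_{\Omega,x}$, and reduce the Dirichlet integral over $\Omega$ to the radial Dirichlet integral over $B_1$; the membership in $W^{1,n}_0(\Omega)$ and the concentration statement are then handled separately. First I would assume $v\in C^\infty_{c,rad}(B_1)$ — which is dense in $W^{1,n}_{0,rad}(B_1)$ in the $W^{1,n}$ norm — so that $v$ is bounded and smooth, and $P_x(v)$ is $C^1$ away from $x$ (since $G_{\Omega,x}\in C^{1,\alpha}_{loc}(\Omega\setminus\{x\})$). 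Writing $u=P_x(v)(y)=v(e^{-\cnn G_{\Omega,x}(y)})$, the chain rule gives, with $z=e^{-\cnn G_{\Omega,x}(y)}$,
$$
  \nabla u(y) = v'(z)\cdot(-\cnn)\,z\,\nabla G_{\Omega,x}(y),
$$
so $|\nabla u(y)|^n = |v'(z)|^n \cnn^n z^n |\nabla G_{\Omega,x}(y)|^n$. Now I would integrate over $\Omega$ via the coarea formula applied to $G=G_{\Omega,x}$: writing $t=G(y)$, so $z=e^{-\cnn t}$ and $dz = -\cnn z\,dt$,
$$
  \intomega |\nabla u|^n\,dy = \int_0^\infty |v'(e^{-\cnn t})|^n\cnn^n e^{-n\cnn t}\left(\int_{\{G=t\}}|\nabla G|^{n-1}\dH\right)dt,
$$
and by Proposition \ref{proposition:properties of Green's function}(b) the inner integral equals $1$. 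Substituting $z=e^{-\cnn t}$ turns the remaining integral into $\int_0^1 |v'(z)|^n \cnn^{n-1} z^{n-1}\,dz$, which — identifying $v$ with its radial profile so that $\|\nabla v\|_{L^n(B_1)}^n = \omega_{n-1}\int_0^1 |v'(z)|^n z^{n-1}\,dz$ and recalling $\omega_{n-1}=\cnn^{\,n-1}$ — is exactly $\|\nabla v\|_{L^n(B_1)}^n$. This proves \eqref{lemma:eq:nablu u is nabla Pu} for smooth $v$.

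Next I would upgrade to general $v\in W^{1,n}_{0,rad}(B_1)$ by density: take $v_k\in C^\infty_{c,rad}(B_1)$ with $v_k\to v$ in $W^{1,n}$; then $\{P_x(v_k)\}$ is Cauchy in the seminorm $\|\nabla\cdot\|_{L^n}$ by linearity of $P_x$ and the identity just proved, and combined with a Poincar\'e inequality on $\Omega$ it is Cauchy in $W^{1,n}_0(\Omega)$; one checks the limit is $P_x(v)$ (e.g. by passing to an a.e.\ convergent subsequence, using continuity of $v\mapsto v(e^{-\cnn G(\cdot)})$ on compact subsets of $\Omega\setminus\{x\}$ and that $|\Omega\setminus B_\epsilon(x)|$ exhausts $\Omega$). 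Hence $P_x(v)\in W^{1,n}_0(\Omega)$ and \eqref{lemma:eq:nablu u is nabla Pu} persists. For the concentration claim, suppose $\{v_i\}$ concentrates at $0$: by definition $\|\nabla v_i\|_{L^n(B_1)}\to 1$, hence $\|\nabla P_x(v_i)\|_{L^n(\Omega)}\to 1$ by \eqref{lemma:eq:nablu u is nabla Pu}; and for the localization, fix $\epsilon>0$, note that $\Omega\setminus B_\epsilon(x)$ is contained in a sublevel set $\{G_{\Omega,x}\le t_\epsilon\}$ for some $t_\epsilon<\infty$ (since $G_{\Omega,x}\to\infty$ only at $x$), which corresponds under $z=e^{-\cnn G}$ to an annular region $\{z\ge e^{-\cnn t_\epsilon}\}=B_1\setminus B_{\rho_\epsilon}(0)$ with $\rho_\epsilon>0$. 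Repeating the coarea computation above but restricting the $y$-integration to $\Omega\setminus B_\epsilon(x)$ (equivalently the $t$-integration to $[0,t_\epsilon]$) gives
$$
  \int_{\Omega\setminus B_\epsilon(x)}|\nabla P_x(v_i)|^n\,dy \le \int_{B_1\setminus B_{\rho_\epsilon}(0)}|\nabla v_i|^n\,dz \to 0
$$
because $\{v_i\}$ concentrates at $0$ and $\rho_\epsilon>0$. Thus $P_x(v_i)$ concentrates at $x$.

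The main obstacle I anticipate is the justification that $P_x(v)$ actually lies in $W^{1,n}_0(\Omega)$ rather than merely having finite Dirichlet energy away from the singularity — i.e.\ ruling out a distributional contribution at the point $x$ and checking the zero boundary trace. Because $|\nabla P_x(v)|^{n}$ is integrable up to $x$ (as the computation shows, with no concentration of mass at $x$ beyond what the finite integral allows) and $P_x(v)$ is bounded when $v$ is, the standard capacity argument — a point has zero $n$-capacity in $\re^n$, so a bounded $W^{1,n}$ function off a point with globally integrable gradient extends across it — closes this gap; the boundary trace vanishes since $G_{\Omega,x}=0$ on $\delomega$ forces $z=e^{-\cnn G_{\Omega,x}}\to 1$, and $v(1)=0$ for $v\in W^{1,n}_{0,rad}(B_1)$. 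The density step also requires mild care that $\|\nabla\cdot\|_{L^n}$ plus the boundary condition controls the full $W^{1,n}_0(\Omega)$ norm, which is exactly the Poincar\'e inequality.
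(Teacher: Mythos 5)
Your proof is correct and follows essentially the same approach as the paper: the core identity comes from the coarea formula applied along level sets of $G_{\Omega,x}$ (the paper uses coarea for $h=e^{-\cnn G}$ directly, while you use coarea for $G$ and change variables $z=e^{-\cnn t}$, which is the same computation), combined with Proposition \ref{proposition:properties of Green's function}(b), and the concentration claim is localized via the inclusion $\Omega\setminus B_\epsilon(x)\subset\{G\le t_\epsilon\}$ exactly as in the paper. You are somewhat more explicit than the paper about the density step, the $n$-capacity argument for $W^{1,n}_0$-membership, and the boundary trace, but these are supplements to, not departures from, the published argument.
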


\begin{proof} \textit{Step 1.} We write $G=G_{\Omega,x}.$
Let $h$ be defined by $h(y)=e^{-\cnn G(y)}$ and hence $u(y)=v(h(y)).$ In particular
$$
  \nabla u(y)=v'(h(y))\,\nabla h(y).
$$
Note that, since $G\geq 0$ in $\Omega$ we get that if $y\in h^{-1}({\{t\}})\cap \Omega,$ then $t\in [0,1].$ Thus the coarea formula gives that
\begin{align*}
 \intomega |\nabla u|^n=&\intomega |v'(h(y))|^n |\nabla h(y)|^{n-1}|\nabla h(y)| dy 
 \smallskip \\
 =&\int_0^1\left[\int_{h^{-1}({\{t\}})\cap \Omega}|v'(h(y))|^n|\nabla h(y)|^{n-1}\dH(y)\right]dt.
\end{align*}
Using that $|\nabla h(y)|=\cnn h(y)|\nabla G(y)|,$ gives
$$
  \intomega |\nabla u|^n=\int_0^1 \omega_{n-1} t^{n-1} |v'(t)|^n\left[\int_{h^{-1}({\{t\}})\cap\Omega}|\nabla G(y)|^{n-1}\dH(y)\right]dt.
$$
Note that $h^{-1}({\{t\}})\cap\Omega$ is a level set of $G.$
Thus we obtain from Proposition \ref{proposition:properties of Green's function} (b) that
$$
  \int_{h^{-1}({\{t\}})\cap \Omega}|\nabla G(y)|^{n-1} \dH(y)=1\quad\forall\,t\in(0,1),
$$
which implies that
$$
  \int_{\Omega}|\nabla u|^n=\int_0^1|v'(t)|^n \omega_{n-1} t^{n-1}\,dt=\int_{B_1}|\nabla v|^n.
$$
This proves \eqref{lemma:eq:nablu u is nabla Pu}.
\smallskip

\textit{Step 2.}
Let us now assume that $\{v_i\}$ concentrates at $0$ and let $\epsilon>0$ be given. We know from Proposition \ref{proposition:properties of Green's function} (e), that for some $M>0$ big enough $\{G>M\}\subset B_{\epsilon}(x).$ Thus we obtain exactly as in Step 1 that
$$
  \int_{\Omega\backslash B_{\epsilon}(x)}|\nabla u_i|^n\leq \int_{\{G\leq M\}}|\nabla u_i|^n=\int_{e^{-\cnn M}}^1|v_i'(t)|^n\, \omega_{n-1} t^{n-1}\,dt.
$$
The right hand side goes to $0,$ since $v_i$ concentrates. This proves that $u_i$ concentrates too.
\end{proof}

\smallskip

We are now able to prove the main theorem.

\begin{proof}[Proof (Theorem \ref{theorem:ball to general domain:sup inequality}).]
We abbreviate again $G=G_{\Omega,0}.$
From Lemma \ref{lemma:ball to domain via G:preserves dirichlet norm} we know that $u\in\mathcal{B}_1(\Omega).$ Using the coarea formula we get
\begin{align*}
 F_{\Omega}(u)=&\intomega\frac{\big(e^{\alpha u^{n/(n-1)}}-1\big)}{|y|^{\beta}}\frac{|\nabla G(y)|}{|\nabla G(y)|}dy
 =\int_0^{\infty}\left[\int_{G^{-1}(\{t\})\cap\Omega}\frac{(e^{\alpha u^{n/(n-1)}}-1)}{|y|^{\beta}|\nabla G(y)|}\dH(y)\right]dt
 \smallskip 
 \\
 =&\int_0^{\infty}\left(e^{\alpha v^{\frac{n}{n-1}}\left(e^{-\cnn t}\right)}-1\right)
 \left[\int_{G^{-1}(\{t\})\cap\Omega}\frac{1}{|y|^{\beta}|\nabla G(y)|}\dH(y)\right]dt.
\end{align*}
We now use Corollary \ref{corollary:Hypot H}, and set
$$
  r(t)=e^{-\cnn t},
$$
to obtain
\begin{align*}
  F_{\Omega}(u)\geq &I_{\Omega}(0)^{n-\beta}\int_0^{\infty}\frac{ e^{\alpha v^{\frac{n}{n-1}}(r(t))}-1}{r(t)^{\beta}}\, \omega_{n-1}\,\omega_{n-1}^{\frac{1}{n-1}} (r(t))^n dt 
  \smallskip \\
  =&-I_{\Omega}(0)^{n-\beta}\int_0^{\infty}\frac{ e^{\alpha v^{\frac{n}{n-1}}(r(t))}-1}{r(t)^{\beta}} \,\omega_{n-1} (r(t))^{n-1}\,r'(t) dt
  \smallskip 
  \\
  =&I_{\Omega}(0)^{n-\beta}\int_0^{1}\frac{ e^{\alpha v^{\frac{n}{n-1}}(r)}-1}{r^{\beta}}\, \omega_{n-1} r^{n-1}\,dr=I_{\Omega}(0)^{n-\beta}F_{B_1}(v).
\end{align*}
This proves the first claim of the theorem. 
The statement about the concentration follows directly from Lemma \ref{lemma:ball to domain via G:preserves dirichlet norm}.
\end{proof}

\section{Domain to Ball Construction}\label{section:domain to ball}

The aim of this section is to prove the inequality $F_{\Omega}^{\delta}(0)\leq I_{\Omega}(0)^{n-\beta}F_{B_1}^{\delta}(0)$. The main difficulty compared to the $2$-dimensional case is that we have to deal with the $n$-Laplace equation, which becomes a nonlinear partial differential equation with weaker regularity properties. We summarized the results on $n$-harmonic functions that we need in an Appendix. Recall that we assume $0\in\Omega.$ 

\begin{theorem}[Concentration Formula]
\label{theorem:concentration formula by domain to ball}
Suppose $\Omega$ contains the origin. Then the following formula holds
$$
  F_{\Omega}^{\delta}(0)= I_{\Omega}(0)^{n-\beta}F_{B_1}^{\delta}(0).
$$
\end{theorem}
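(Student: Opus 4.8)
The plan is to prove the two inequalities separately. One direction, namely $F_\Omega^\delta(0) \geq I_\Omega(0)^{n-\beta} F_{B_1}^\delta(0)$, is already essentially contained in the ball-to-domain construction: given any sequence $\{v_i\} \subset W^{1,n}_{0,\mathrm{rad}}(B_1) \cap \mathcal{B}_1(B_1)$ concentrating at $0$ with $\limsup F_{B_1}(v_i)$ close to $F_{B_1}^\delta(0)$, Theorem \ref{theorem:ball to general domain:sup inequality} produces $u_i = P_0(v_i) \in \mathcal{B}_1(\Omega)$ concentrating at $0$ with $F_\Omega(u_i) \geq I_\Omega(0)^{n-\beta} F_{B_1}(v_i)$; taking limsup and then the supremum over such $\{v_i\}$ gives the bound, provided one first observes (via Lemma \ref{lemma new:sup over W12 same as over Cinfty} and Schwarz symmetrization) that $F_{B_1}^\delta(0)$ is realized by radially decreasing sequences. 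So the real content of the theorem is the reverse inequality $F_\Omega^\delta(0) \leq I_\Omega(0)^{n-\beta} F_{B_1}^\delta(0)$, which is what Section \ref{section:domain to ball} is devoted to.

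For the hard direction I would start with a sequence $\{u_i\} \subset \mathcal{B}_1(\Omega)$ concentrating at $0$ with $\lim_{i\to\infty} F_\Omega(u_i) = F_\Omega^\delta(0)$; by Lemma \ref{lemma new:sup over W12 same as over Cinfty} we may take $u_i \in C_c^\infty(\Omega)$, $u_i \geq 0$. The idea is to transplant each $u_i$ back to the ball along the level sets of the $n$-Green's function $G = G_{\Omega,0}$: roughly, define a radial function $v_i$ on $B_1$ so that $v_i$ on the sphere of radius $e^{-\cnn t}$ records the value (or an appropriate average/rearrangement) of $u_i$ on the level set $\{G = t\}$. One natural choice is to let $v_i$ be built from the function $t \mapsto \max_{\{G=t\}} u_i$ or from a suitable rearrangement of $u_i$ with respect to $G$; the construction must be chosen so that (a) $\|\nabla v_i\|_{L^n(B_1)} \leq \|\nabla u_i\|_{L^n(\Omega)} \leq 1$, which should follow from a Pólya–Szegő type argument using Proposition \ref{proposition:properties of Green's function}(b) (i.e. $\int_{\{G=t\}} |\nabla G|^{n-1} \, d\mathcal{H}^{n-1} = 1$) together with the coarea formula and Hölder/Jensen, and (b) $v_i$ concentrates at $0$ in $B_1$, which uses that the level sets $\{G > t\}$ are approximately small balls at $0$ (Proposition \ref{proposition:properties of Green's function}(e)) so that mass escaping a neighborhood of $0$ in the ball corresponds to mass escaping a neighborhood of $0$ in $\Omega$. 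Then one estimates $F_\Omega(u_i)$ from above by passing through the coarea formula exactly as in the proof of Theorem \ref{theorem:ball to general domain:sup inequality}, but now the inequality in Corollary \ref{corollary:Hypot H} points the wrong way, so one instead uses the \emph{asymptotic equality} in that corollary (the Remark after it: the level-set inequality becomes an equality as $r \to 0$, because $|\nabla G(y)| = (\cnn |x-y|)^{-1}(1+o(1))$ near the singularity). Since the sequence concentrates at $0$, all the relevant mass of $F_\Omega(u_i)$ lives on level sets $\{G = t\}$ with $t$ large, i.e. $r = e^{-\cnn t}$ small, where the error terms are negligible; this is what lets one convert $F_\Omega(u_i)$ into $I_\Omega(0)^{n-\beta} F_{B_1}(v_i) + o(1)$ and conclude $\lim F_\Omega(u_i) \leq I_\Omega(0)^{n-\beta} F_{B_1}^\delta(0)$.

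The main obstacle, and the reason Section \ref{section:domain to ball} exists as a separate section with an Appendix on $n$-harmonic functions, is controlling the Dirichlet energy under this domain-to-ball transplantation when $n > 2$. For $n = 2$ one can use harmonic transplantation, where the $n$-Green's function is linear and level-set geometry is rigid; for $n \geq 3$ the $n$-Laplace operator is genuinely nonlinear, $G_{\Omega,0}$ has only $C^{1,\alpha}_{loc}$ regularity away from the singularity, and the regular part $H_{\Omega,x}$ need not have bounded gradient near $x$ (as noted in the Remark, this is an open point from \cite{Kichenassamy Veron}). So one must carry out the energy comparison and the concentration-preservation argument using only the coarea formula, Proposition \ref{proposition:properties of Green's function}, and the weaker regularity/approximation results collected in the Appendix, rather than any pointwise Jacobian estimate on a conformal map. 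I would expect the technical heart of the proof to be: (i) making precise the transplantation $u_i \mapsto v_i$ so that it is well-defined on $W^{1,n}_0$ (not just smooth functions) and energy-nonincreasing, and (ii) quantifying "all the $F_\Omega$-mass concentrates where the Corollary \ref{corollary:Hypot H} error is small," i.e. showing $\int_{\{G \leq M\}} |y|^{-\beta}(e^{\alpha u_i^{n/(n-1)}} - 1) \, dy \to 0$ as $i \to \infty$ for each fixed $M$, which follows from Lemma \ref{lemma:compactness in the interior} applied to the region $\{G \leq M\}$ together with the concentration hypothesis ($\|\nabla u_i\|_{L^n(\{G\leq M\})} \to 0$ and $u_i \rightharpoonup 0$).
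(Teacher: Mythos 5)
Your high-level reading of the theorem is correct: the $\geq$ direction is Theorem \ref{theorem:ball to general domain:sup inequality} and the whole content is the $\leq$ direction, and you correctly identify Lemmas \ref{lemma new:sup over W12 same as over Cinfty} and \ref{lemma:compactness in the interior} as relevant supporting tools. However, your plan for the hard direction deviates from the paper in a way that leaves a genuine gap.

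You propose to define $v_i$ as a ``level-set rearrangement of $u_i$ with respect to $G$'' and to control the Dirichlet energy by a ``P\'olya--Szeg\H{o} type argument using Proposition \ref{proposition:properties of Green's function}(b)''. No such rearrangement inequality is available here: P\'olya--Szeg\H{o} for rearrangement along the level sets of a fixed function $G$ requires strong geometric rigidity of those level sets, and the paper never proves or invokes one. Moreover you propose to close the functional estimate via the asymptotic \emph{equality} in the Remark after Corollary \ref{corollary:Hypot H}; the paper states explicitly that this asymptotic equality ``is not required for the proof'' and deliberately avoids it, in part because it hinges on regularity of $\nabla H_{\Omega,x}$ near the singularity (the $C^{1,\alpha}$ theory for the $n$-Laplacian does not hand you a pointwise Jacobian). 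Neither ingredient you propose is on solid ground in $n\ge 3$.

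The paper instead runs a three-stage modification (Lemmas \ref{lemma:main properties step 1-3 in Flucher}, \ref{lemma:the one we sent to Struwe}, \ref{main lemma si instead of 1. domain to ball}) whose logic is quite different from your sketch. First $u_i$ is replaced by its $n$-harmonic extension on $\{u_i<k_i\}$, so that $\lambda_i u_i\to G_{\Omega,0}$ in $C^1_{loc}(\Omega\setminus\{0\})$ (Lemma \ref{lemma:main properties step 1-3 in Flucher}); then a further truncation at level $s_i=l/\lambda_{i_l}$ makes $\{u_i\ge s_i\}$ approximately small balls at $0$ (Lemma \ref{lemma:the one we sent to Struwe}, Proposition \ref{proposition:properties of Green's function}(e)). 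Only then does Lemma \ref{main lemma si instead of 1. domain to ball} build $v_i\in W^{1,n}_{0,rad}(B_1)$ \emph{piecewise}: the inner part of $v_i$ (inside $B_{\delta_i}$) is a rescaled Schwarz symmetrization $u_i^\ast(a_ix/\delta_i)$ of $u_i$, legitimate because the concentration region is an approximate ball; the outer part is the explicit logarithm $-\tfrac{s_i}{\cnn t_i}\log|x|$. The energy bound is not a P\'olya--Szeg\H{o} argument along $G$-level sets. The outer Dirichlet energy equals $s_i^n/t_i^{n-1}$, which is matched against the lower bound $\int_{\{u_i<s_i\}}|\nabla u_i|^n\ge s_i^n/t_i^{n-1}$ obtained from the fact that $G/t_i$ minimizes the $n$-Dirichlet integral with the given boundary values (a capacity argument using Proposition \ref{proposition:properties of Green's function}(a), not (b)); the inner energy is handled by the modified P\'olya--Szeg\H{o} of Proposition \ref{prop:Hardy littlewoood modified}(ii). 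The functional comparison is the modified Hardy--Littlewood of Proposition \ref{prop:Hardy littlewoood modified}(i) plus the limit $a_i/\delta_i\to I_\Omega(0)$, with no appeal to the asymptotic sharpness of Corollary \ref{corollary:Hypot H}. If you want to salvage your plan you would need to supply a rigorous $G$-level-set P\'olya--Szeg\H{o} inequality and a rigorous quantitative version of the Remark after Corollary \ref{corollary:Hypot H}, both of which are nontrivial and exactly what the paper's route is designed to circumvent.
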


The proof of this result will be a consequence of the following proposition, which allows to construct a concentrating sequence in the ball from a given concentrating sequence in $\Omega.$ 

\begin{proposition}\label{proposition:main domain to ball}
Let $\{u_i\}\subset  \mathcal{B}_1(\Omega)\cap C^{\infty}(\Omega)$ be a sequence which concentrates at $0$ and is a maximizing sequence for $F_{\Omega}^{\delta}(0).$
Then  there exists a sequence $\{v_i\}\subset W_{0,rad}^{1, n}(B_1)\cap \mathcal{B}_1(B_1)$ concentrating at $0,$ such that
\begin{equation*}
  F_{\Omega}^{\delta}(0)=\lim_{i\to\infty}F_{\Omega}(u_i)\leq I_{\Omega}^{n-\beta}(0)\liminf_{i\to\infty}F_{B_1}(v_i).
\end{equation*}
\end{proposition}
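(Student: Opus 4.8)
The plan is to transplant $u_i$ to the ball through the level sets of $G:=G_{\Omega,0}$, which replaces the two‑dimensional device $u_i\mapsto u_i\circ\varphi$ ($\varphi$ a conformal map $B_1\to\Omega$) by an \emph{average over level sets}, there being no conformal map available for $n\ge 3$. Concretely, after replacing $\{u_i\}$ by a $C_c^\infty$ maximizing sequence via Lemma~\ref{lemma new:sup over W12 same as over Cinfty} (which changes neither $\lim_iF_\Omega(u_i)=F_\Omega^\delta(0)$ nor the concentration point), I set, for $t\ge 0$,
$$
  \psi_i(t):=\int_{\{G=t\}}u_i\,|\nabla G|^{n-1}\dH ,\qquad v_i(x):=\psi_i\!\left(-\tfrac1{\cnn}\log|x|\right),\quad x\in B_1\setminus\{0\}.
$$
By Proposition~\ref{proposition:properties of Green's function}(b) the measure $|\nabla G|^{n-1}\dH$ on $\{G=t\}$ is a probability measure, so $\psi_i(t)$ is a genuine average of $u_i$ over that level set, and since $u_i$ is compactly supported $\psi_i(t)=0$ for $t$ near $0$, whence $v_i\in W^{1,n}_{0,rad}(B_1)$.

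First I would verify $v_i\in\mathcal{B}_1(B_1)$ and that $v_i$ concentrates at $0$. Differentiating $\psi_i$ by applying the divergence theorem to the field $u_i|\nabla G|^{n-2}\nabla G$ on the shells $\{t_1<G<t_2\}$ — using that $G$ is $n$-harmonic on $\Omega\setminus\{0\}$, so this field is divergence free up to $\langle\nabla u_i,|\nabla G|^{n-2}\nabla G\rangle$ — gives $\psi_i'(t)=\int_{\{G=t\}}|\nabla G|^{n-3}\langle\nabla u_i,\nabla G\rangle\dH$. Hölder with exponents $n,\tfrac n{n-1}$ together with Proposition~\ref{proposition:properties of Green's function}(b) (which makes the second factor equal to $1$, the exponent of $|\nabla G|$ collapsing to $n-1$) yields $|\psi_i'(t)|^n\le\int_{\{G=t\}}\frac{|\nabla u_i|^n}{|\nabla G|}\dH$. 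The substitution $s=e^{-\cnn t}$ turns $\|\nabla v_i\|_{L^n(B_1)}^n$ into $\int_0^\infty|\psi_i'(t)|^n\,dt$, which the coarea formula then bounds by $\intomega|\nabla u_i|^n\le 1$. Concentration of $v_i$ follows exactly as in Step~2 of Lemma~\ref{lemma:ball to domain via G:preserves dirichlet norm}, reversed: by Proposition~\ref{proposition:properties of Green's function}(e) each $\{G\le T\}$ lies in $\Omega\setminus B_{\rho(T)}(0)$ for some $\rho(T)>0$, so $\int_{B_1\setminus B_\epsilon}|\nabla v_i|^n\le\int_{\{G<T_\epsilon\}}|\nabla u_i|^n\le\int_{\Omega\setminus B_{\rho(T_\epsilon)}(0)}|\nabla u_i|^n\to 0$.

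The main point is the comparison of the functionals. By the coarea formula,
$$
  F_\Omega(u_i)=\int_0^\infty\!\!\left[\int_{\{G=t\}}\frac{e^{\alpha u_i^{n/(n-1)}}-1}{|y|^{\beta}\,|\nabla G|}\dH\right]dt,\qquad
  F_{B_1}(v_i)=\omega_{n-1}^{\frac n{n-1}}\int_0^\infty\!\!\left(e^{\alpha\psi_i(t)^{n/(n-1)}}-1\right)e^{-\alpha_{n,\beta}t}\,dt .
$$
Since $u_i$ concentrates at $0$, a cut‑off argument combined with Lemma~\ref{lemma:compactness in the interior} shows $\int_{\{G\le M\}}\frac{e^{\alpha u_i^{n/(n-1)}}-1}{|y|^{\beta}}\,dy\to 0$ for each fixed $M$ (on $\{G\le M\}$ the weight $|y|^{-\beta}$ is bounded), so only $t>M$ matters. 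For $t$ large, Proposition~\ref{proposition:properties of Green's function}(e) says $\{G=t\}$ is an approximately small sphere of radius $\tau_t=I_{\Omega}(0)e^{-\cnn t}$, and since $|y|\,|\nabla H_{\Omega,0}(y)|\to 0$ as $y\to 0$ (the remark following Corollary~\ref{corollary:Hypot H}), both $|y|^{-\beta}$ and $|\nabla G|$ are, uniformly on $\{G=t\}$, equal to $\tau_t^{-\beta}$ and $(\cnn\tau_t)^{-1}$ up to a factor $1+o(1)$; hence the surface density $\frac{\dH}{|y|^{\beta}|\nabla G|}$ equals $\big(\omega_{n-1}^{n/(n-1)}I_{\Omega}(0)^{n-\beta}e^{-\alpha_{n,\beta}t}\big)\,|\nabla G|^{n-1}\dH$ up to $1+o(1)$. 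Using the convexity on $[0,\infty)$ of $\tau\mapsto e^{\alpha\tau^{n/(n-1)}}-1$, Jensen's inequality relates the level‑set average of $e^{\alpha u_i^{n/(n-1)}}-1$ to $e^{\alpha\psi_i(t)^{n/(n-1)}}-1$; matching densities then identifies, up to a Jensen defect, $F_\Omega(u_i)$ with $I_{\Omega}(0)^{n-\beta}F_{B_1}(v_i)$, and passing to $\liminf$ by Fatou gives $F_\Omega^\delta(0)=\lim_iF_\Omega(u_i)\le I_{\Omega}(0)^{n-\beta}\liminf_iF_{B_1}(v_i)$ — which, via Theorem~\ref{theorem:ball to general domain:sup inequality} and the trivial bound $\liminf_iF_{B_1}(v_i)\le F_{B_1}^\delta(0)$, upgrades to the equality in Theorem~\ref{theorem:concentration formula by domain to ball}.

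The step I expect to be hardest is controlling the Jensen defect: for a convex integrand Jensen bounds $e^{\alpha\psi_i(t)^{n/(n-1)}}-1$ \emph{from below} by the level‑set average, i.e.\ the ``wrong'' way for an upper bound on $F_\Omega(u_i)$, so one must show this defect is negligible — equivalently that the maximizing sequence $u_i$ is asymptotically constant on the level sets $\{G=t\}$ over the relevant (large) range of $t$, i.e.\ asymptotically radial about $0$ near the concentration point. This is where the maximality of $\{u_i\}$ must be invoked, to rule out persistent oscillation of $u_i$ along level sets, and it has no counterpart when $n=2$, where the conformal map is a diffeomorphism and no averaging — hence no Jensen loss — occurs; it is the reason the $n$-harmonic setting is genuinely harder. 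The remaining burden is bookkeeping: the $o(1)$ in the approximate‑ball reduction is uniform only for $t>M$ with $M\to\infty$, so the limits in $i$ and in $M$ must be taken in the correct order (a diagonal argument), and every manipulation with $\nabla G$ on level sets has to be justified from the $C^{1,\alpha}_{loc}$-regularity of $G$ away from the singularity and the almost everywhere non‑vanishing of $\nabla G$ on level sets — the properties assembled in the Appendix.
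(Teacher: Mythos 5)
Your transplantation-by-averaging idea is elegant, and the bookkeeping parts you work out carefully (the identity $\psi_i'(t)=\int_{\{G=t\}}|\nabla G|^{n-3}\langle\nabla u_i,\nabla G\rangle\,\dH$, the H\"older estimate $|\psi_i'(t)|^n\le\int_{\{G=t\}}|\nabla u_i|^n/|\nabla G|\,\dH$, the change of variables showing $\|\nabla v_i\|^n_{L^n(B_1)}\le\|\nabla u_i\|^n_{L^n(\Omega)}$, and the concentration) are all correct. But the central step does not go through, and you correctly locate where: Jensen's inequality for the convex function $\tau\mapsto e^{\alpha\tau^{n/(n-1)}}-1$ gives
$$
e^{\alpha\psi_i(t)^{n/(n-1)}}-1\;\le\;\int_{\{G=t\}}\bigl(e^{\alpha u_i^{n/(n-1)}}-1\bigr)\,|\nabla G|^{n-1}\,\dH,
$$
so after matching the surface densities on approximately small spheres one obtains $I_{\Omega}(0)^{n-\beta}F_{B_1}(v_i)\lesssim F_{\Omega}(u_i)$, which is the \emph{reverse} of the inequality you need. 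Fatou does not repair a sign error: to conclude $\lim_iF_{\Omega}(u_i)\le I_{\Omega}(0)^{n-\beta}\liminf_iF_{B_1}(v_i)$ you would have to show that the Jensen defect vanishes, i.e.\ that a maximizing concentrating sequence is asymptotically constant on the level sets of $G$. That is not established anywhere in the paper, it is by no means a formal consequence of maximality (oscillations on $\{G=t\}$ can be small in $W^{1,n}$ yet huge inside the exponential), and in fact nothing like it is invoked in any of the existing proofs. So as it stands the proof has a genuine gap, not merely a deferred technicality.

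The paper sidesteps this entirely by not averaging. The domain-to-ball construction there proceeds in three layers: first a capacity argument in dimension $n-1$ (Lemma \ref{lemma:the capacity argument}) and an $n$-harmonic replacement outside the small high-level set (Lemma \ref{lemma:main properties step 1-3 in Flucher}) produce a sequence whose rescalings converge in $C^1_{loc}$ to $G_{\Omega,0}$; second (Lemma \ref{lemma:the one we sent to Struwe}) the modified sequence has superlevel sets that are approximately small balls at $0$; third (Lemma \ref{main lemma si instead of 1. domain to ball}), the radial comparison $v_i$ on $B_1$ is obtained by gluing a rescaled Schwarz symmetrization $u_i^{\ast}$ on a small inner ball to an explicit logarithmic profile on the annulus, and the crucial functional inequality is the modified Hardy--Littlewood inequality of Proposition \ref{prop:Hardy littlewoood modified}(i), which gives
$$
\int_{\{u_i\ge s_i\}}\frac{e^{\alpha u_i^{n/(n-1)}}-1}{|x|^{\beta}}\;\le\;\int_{\{u_i^{\ast}\ge s_i\}}\frac{e^{\alpha(u_i^{\ast})^{n/(n-1)}}-1}{(|x|^{\beta})^{\ast}},
$$
i.e.\ an inequality in the correct direction, with no Jensen loss to control. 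In other words, the role your averaging was meant to play is played by symmetrization, which provably does not decrease the functional; the Green's function enters only to dictate the exterior profile and the scaling factor $a_i/\delta_i\to I_{\Omega}(0)$. Your remark that the $2$-dimensional case avoids the issue via a conformal diffeomorphism is also off: the domain-to-ball direction in the $2$-dimensional proof of Csat\'o--Roy uses symmetrization in the same way; the conformal/$n$-harmonic transplantation only appears in the ball-to-domain direction (Theorem \ref{theorem:ball to general domain:sup inequality}), where one pushes radial functions forward via level sets of $G$ and no averaging (hence no Jensen defect) occurs.
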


\begin{proof}[Proof (Theorem \ref{theorem:concentration formula by domain to ball}).]
From Lemma \ref{lemma new:sup over W12 same as over Cinfty} and Proposition \ref{proposition:main domain to ball} we immediately obtain that 
$$
  F_{\Omega}^{\delta}(0)\leq I_{\Omega}^{n-\beta}(0)F_{B_1}^{\delta}(0).
$$
The reverse inequality follows from Theorem \ref{theorem:ball to general domain:sup inequality}.
\end{proof}
\smallskip

The proof of Proposition \ref{proposition:main domain to ball} is long and technical. We split it into several intermediate steps. To make the presentation less cumbersome, we assume in what follows that $0\in\Omega.$ However, we actually need this, and the fact that concentration occurs at $0,$ only in Step 6 in the proof of Lemma \ref{main lemma si instead of 1. domain to ball}. The proof of the next two Lemmas is identical to the $2$-dimensional case, see \cite{Csato Roy Calc var Pde}.

\begin{lemma}
\label{lemma:limit over Bri same as Omega for subsequence}
Suppose $\{u_i\}\subset \mathcal{B}_1(\Omega)$ concentrates at $x_0\in\Omega$ and let $\{r_i\}\subset\re$ be  such that $r_i>0$ for all $i$ and $\lim_{i\to\infty}r_i=0.$ Then there exists a subsequence $u_{j_i}$ such that 
$$
  \lim_{i\to\infty}F_{\Omega}(u_i)=
  \lim_{i\to\infty}\int_{\Omega}\frac{e^{\alpha u_{i}^\frac{n}{n-1}}-1}{|x|^{\beta}}dx
  =\lim_{i\to\infty}\int_{B_{2r_i}(x_0)}\frac{e^{\alpha u_{j_i}^\frac{n}{n-1}}-1}{|x|^{\beta}}dx.
$$
Moreover any subsequence of $u_{j_i}$ will also satisfy the above equality.
\end{lemma}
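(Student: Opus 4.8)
The plan is to localize the integral $F_\Omega(u_i)$ near $x_0$ and then perform a diagonal extraction that matches the fixed functions to the shrinking balls $B_{2r_i}(x_0)$. Write $L:=\lim_{i\to\infty}F_\Omega(u_i)$. The first (and only substantial) step is the localization estimate: for every $\epsilon>0$ small enough that $\overline{B_{2\epsilon}(x_0)}\subset\Omega$,
\begin{equation*}
  \lim_{i\to\infty}\int_{\Omega\setminus B_{2\epsilon}(x_0)}\frac{e^{\alpha u_i^{n/(n-1)}}-1}{|x|^\beta}\,dx=0,\qquad\text{equivalently}\qquad \lim_{i\to\infty}\int_{B_{2\epsilon}(x_0)}\frac{e^{\alpha u_i^{n/(n-1)}}-1}{|x|^\beta}\,dx=L.
\end{equation*}
To prove this I would fix $\phi\in C_c^\infty(\Omega)$ with $0\le\phi\le1$, $\phi\equiv0$ on $B_\epsilon(x_0)$ and $\phi\equiv1$ off $B_{2\epsilon}(x_0)$. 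Since $\{u_i\}$ concentrates at $x_0$ we have $u_i\rightharpoonup0$ in $W^{1,n}(\Omega)$, hence $u_i\to0$ in $L^n(\Omega)$ by \eqref{eq:properties of concentrating sequences}, and $\int_{\Omega\setminus B_\epsilon(x_0)}|\nabla u_i|^n\to0$. Because $\operatorname{supp}\phi\subset\Omega\setminus B_\epsilon(x_0)$, the identity $\nabla(\phi u_i)=\phi\,\nabla u_i+u_i\,\nabla\phi$ gives $\|\nabla(\phi u_i)\|_{L^n}\to0$ and $\phi u_i\rightharpoonup0$ in $W^{1,n}(\Omega)$, with $\phi u_i\in W_0^{1,n}(\Omega)$ since $\phi$ has compact support. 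Thus Lemma \ref{lemma:compactness in the interior} (with limit function $0$ and any $\eta\in(0,1)$) applies to $\{\phi u_i\}$ and yields $e^{\alpha(\phi u_i)^{n/(n-1)}}/|x|^\beta\to1/|x|^\beta$ in $L^1(\Omega)$; restricting to the set $\{\phi=1\}=\Omega\setminus B_{2\epsilon}(x_0)$, where $\phi u_i=u_i$, gives the claim.

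For the diagonal extraction, I would first choose $i_0$ so that $\overline{B_{4r_i}(x_0)}\subset\Omega$ for all $i\ge i_0$ (possible since $r_i\to0$ and $x_0\in\Omega$). For each fixed $i\ge i_0$ the radius $2r_i$ is a fixed positive number, so the localization step \emph{applied to the full sequence $\{u_j\}_{j}$} (with $\epsilon=r_i$) gives
\begin{equation*}
  \lim_{j\to\infty}\int_{B_{2r_i}(x_0)}\frac{e^{\alpha u_j^{n/(n-1)}}-1}{|x|^\beta}\,dx=L.
\end{equation*}
Therefore I can pick indices $j_{i_0}<j_{i_0+1}<\cdots$ with $\bigl|\int_{B_{2r_i}(x_0)}(e^{\alpha u_{j_i}^{n/(n-1)}}-1)|x|^{-\beta}\,dx-L\bigr|<1/i$ for every $i\ge i_0$; letting $i\to\infty$ produces exactly $\lim_i\int_{B_{2r_i}(x_0)}(e^{\alpha u_{j_i}^{n/(n-1)}}-1)|x|^{-\beta}\,dx=L=\lim_iF_\Omega(u_i)$, which is the asserted identity. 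The ``moreover'' clause is then immediate: the bound $|\cdots-L|<1/i$ holds for \emph{every} $i$, so it persists along any subsequence $(i_k)$ (with $1/i_k\to0$), while $F_\Omega(u_{i_k})\to L$ as a subsequence of a convergent sequence.

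The step I expect to be the real obstacle is the localization estimate. Its only delicate point is that $u_i$ does not vanish on $\partial(\Omega\setminus B_\epsilon(x_0))$, so Lemma \ref{lemma:compactness in the interior} cannot be applied to the restrictions directly and the cut-off $\phi$ is needed; one must also keep in mind that $|x|^{-\beta}$ may be singular inside $\Omega\setminus B_\epsilon(x_0)$ when $x_0\ne0$, which is harmless because the integrability of $|x|^{-\beta s}$ for $s$ close to $1$ is precisely what the proof of Lemma \ref{lemma:compactness in the interior} exploits. Everything past that is a routine diagonalization, and the whole argument parallels the two-dimensional case in \cite{Csato Roy Calc var Pde}.
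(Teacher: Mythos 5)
Your proof is correct and, as far as I can tell, follows the same strategy as the proof indicated by the paper (which defers to the $2$-dimensional reference with the remark that the proof is identical): a localization step reducing $F_\Omega(u_i)$ to a fixed small ball around $x_0$ by combining a cutoff with Lemma~\ref{lemma:compactness in the interior}, followed by a diagonal extraction matching the subsequence indices to the shrinking radii $r_i$.

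Two small points should be patched, neither of which affects the substance. First, a function $\phi$ with $\phi\equiv1$ on $\Omega\setminus B_{2\epsilon}(x_0)$ cannot lie in $C_c^\infty(\Omega)$: its support meets $\partial\Omega$. You should instead take $\phi$ smooth on $\re^n$ (say $\phi=1-\psi$ with $\psi\in C_c^\infty(B_{2\epsilon}(x_0))$, $\psi\equiv1$ on $B_\epsilon(x_0)$), and the reason $\phi u_i\in W_0^{1,n}(\Omega)$ is not that $\phi$ has compact support, but that $u_i\in W_0^{1,n}(\Omega)$ already and multiplication by a bounded smooth function with bounded gradient preserves $W_0^{1,n}(\Omega)$. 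Second, Lemma~\ref{lemma:compactness in the interior} as stated yields the $L^1$-convergence only along a subsequence; for your diagonalization you need the localization limit for the full sequence $\{u_j\}_j$ at each fixed $\epsilon=r_i$. This is harmless because the limit $1/|x|^\beta$ is uniquely determined, so every subsequence has a further subsequence converging in $L^1$ to the same function, which forces convergence of the full sequence; you should state this explicitly (or reorganize the diagonal extraction to pass to subsequences inside the inductive step).

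Everything else — the observation that $\|\phi\nabla u_i\|_{L^n}\to0$ by concentration, $\|u_i\nabla\phi\|_{L^n}\to0$ by \eqref{eq:properties of concentrating sequences}, the restriction to $\{\phi=1\}$ where $\phi u_i=u_i$, the choice of $i_0$ so the balls fit inside $\Omega$, and the treatment of the ``moreover'' clause via the bound $|\cdots - L|<1/i$ — is sound.
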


\begin{lemma}
\label{lemma:a.e. convergence and bounded by 1}
Suppose $\{u_i\}$ is a sequence of measurable non-negative functions such that $u_i\to 0$ almost everywhere in $\Omega.$ Let $\{s_i\}\subset\re$ be a bounded sequence.
Then
$$
  \lim_{i\to\infty}\int_{\{u_i\leq s_i\}}\frac{e^{\alpha u_i^\frac{n}{n-1}}-1}{|x|^{\beta}}dx=0.
$$
\end{lemma}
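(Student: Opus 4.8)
\textbf{Proof plan for Lemma \ref{lemma:a.e. convergence and bounded by 1}.}

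The plan is to combine the pointwise bound $e^{\alpha u_i^{n/(n-1)}}-1 \leq e^{\alpha s_i^{n/(n-1)}}-1$ valid on $\{u_i\leq s_i\}$ with the integrability of $|x|^{-\beta}$ and dominated convergence. First I would observe that on the set $\{u_i\leq s_i\}$ we have $0\leq e^{\alpha u_i^{n/(n-1)}}-1 \leq e^{\alpha s_i^{n/(n-1)}}-1$, and since $\{s_i\}$ is bounded, say $|s_i|\leq M$ for all $i$, this gives the uniform bound
$$
  0 \leq \frac{e^{\alpha u_i^{n/(n-1)}}-1}{|x|^{\beta}} \mathbf{1}_{\{u_i\leq s_i\}}(x)
  \leq \bigl(e^{\alpha M^{n/(n-1)}}-1\bigr)\frac{1}{|x|^{\beta}}
  =: C_M \,|x|^{-\beta},
$$
where the right-hand side is a fixed $L^1(\Omega)$ function because $\beta\in[0,n)$ and $\Omega$ is bounded.

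Next I would establish pointwise convergence to $0$ of the integrand. For a.e.\ fixed $x\in\Omega$ we have $u_i(x)\to 0$, hence $e^{\alpha u_i(x)^{n/(n-1)}}-1 \to 0$; multiplying by the fixed finite value $|x|^{-\beta}$ (finite for $x\neq 0$, which is a.e.\ $x$) and by the indicator $\mathbf{1}_{\{u_i\leq s_i\}}(x)\in\{0,1\}$, the product tends to $0$ as well. Therefore
$$
  \frac{e^{\alpha u_i^{n/(n-1)}}-1}{|x|^{\beta}} \mathbf{1}_{\{u_i\leq s_i\}} \longrightarrow 0
  \quad\text{a.e.\ in }\Omega.
$$
By the dominated convergence theorem with dominating function $C_M |x|^{-\beta}\in L^1(\Omega)$, the integrals converge to $0$, which is exactly the claim.

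There is essentially no obstacle here; the only point requiring a word of care is the appearance of $s_i$ inside the domain of integration (not just inside the integrand), but this is handled cleanly by writing the domain restriction as multiplication by the indicator function and noting that indicators are bounded by $1$, so they do not spoil the domination. The boundedness of $\{s_i\}$ is used precisely to make the constant $C_M$ independent of $i$; if $s_i$ were allowed to blow up, the argument would fail, which is why the hypothesis is stated. No use of the Moser--Trudinger inequality or of $\|\nabla u_i\|_{L^n}$ is needed for this particular lemma.
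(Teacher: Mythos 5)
Your argument is correct and is the natural one: dominated convergence with dominating function $C_M|x|^{-\beta}\in L^1(\Omega)$, which is exactly what the boundedness of $\{s_i\}$ and the integrability of $|x|^{-\beta}$ (for $\beta\in[0,n)$ on a bounded $\Omega$) provide, together with a.e.\ pointwise convergence of the integrand to $0$. The paper does not spell out the proof here (it refers to the identical two-dimensional argument in the cited reference), but this is precisely that argument, and your remark that the moving domain of integration is handled by rewriting it as multiplication by the indicator $\mathbf{1}_{\{u_i\leq s_i\}}\leq 1$ is the one small point worth making explicit.
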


\begin{lemma}
\label{lemma:the capacity argument}
Suppose $\{u_i\}\subset\mathcal{B}_1(\Omega)\cap C^{\infty}(\Omega)$ concentrates at $0\in\Omega$ and satisfies
\begin{equation}
 \label{eq:lemma:capacity argument maxim seq.}
  \lim_{i\to\infty}F_{\Omega}(u_i)=F_{\Omega}^{\delta}(0).
\end{equation}
Then for any $r>0$ there exists $j\in\mathbb{N}$ and $k_j\in [1,2]$ such that 
\begin{equation}
 \label{eq:lemma capacity arg. not empty Br}
  \{u_j\geq k_j\}\cap B_r(0)\neq \emptyset
\end{equation}
and
all connected components $A$ of $\{u_j\geq k_j\}$ will have the property:
\begin{equation}
 \label{eq:lemma:capacity arg. Br and B2r}
  \text{If }\;A\cap B_r(0)\neq\emptyset\quad\text{ then }\quad A\subset B_{2r}(0).
\end{equation}
Moreover $A$ has smooth boundary.
\end{lemma}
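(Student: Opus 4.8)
The plan is to combine a ``capacity'' type argument with Sard's theorem. The heuristic is that because $\{u_i\}$ concentrates at $0$ and is maximizing for $F_\Omega^\delta(0)$, the sequence cannot afford to have two different ``bumps'' of height between $1$ and $2$ lying far apart: a thin neck of level set joining a component that meets $B_r(0)$ to a region outside $B_{2r}(0)$ would have to cross the annulus $B_{2r}(0)\setminus B_r(0)$ at every level $k\in[1,2]$, and by the coarea formula the Dirichlet integral on this annulus at levels in $[1,2]$ would then be bounded below by a fixed positive constant (a capacity of the annulus in $\re^n$ in the $n$-capacity sense), contradicting $\int_{\Omega\setminus B_r(0)}|\nabla u_i|^n\to 0$ as $i\to\infty$. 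So the first step is to quantify this: I would show that if for \emph{every} $k\in[1,2]$ some connected component of $\{u_i\ge k\}$ meets $B_r(0)$ but is not contained in $B_{2r}(0)$, then $\int_{B_{2r}(0)\setminus B_r(0)}|\nabla u_i|^n\ge c_{n,r}>0$, using that $u_i$ restricted to a curve (or more precisely, using the coarea slicing in $k$ together with the relative $n$-capacity of the spherical shell, i.e.\ the estimate $\operatorname{cap}_n(\overline{B_r},\re^n\setminus B_{2r})=\omega_{n-1}(\log 2)^{1-n}$ or a crude lower bound thereof). Since concentration at $0$ forces $\int_{\Omega\setminus B_r(0)}|\nabla u_i|^n\to 0$, for all $i$ large this is impossible; hence for all large $i$ there exists $k_i\in[1,2]$ for which the alternative \eqref{eq:lemma:capacity arg. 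Br and B2r} holds, i.e.\ every component of $\{u_i\ge k_i\}$ meeting $B_r(0)$ is contained in $B_{2r}(0)$.

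Next I would arrange \eqref{eq:lemma capacity arg. not empty Br}, that $\{u_j\ge k_j\}\cap B_r(0)\ne\emptyset$. This holds for all large $i$ and any $k_i\le 2$ provided $\sup_{B_r(0)}u_i\ge 2$, which I would verify: if $u_i$ stayed bounded by $2$ on $B_r(0)$ along a subsequence, then by Lemma~\ref{lemma:a.e. convergence and bounded by 1} (applied with $s_i\equiv 2$, using $u_i\to 0$ a.e.\ from concentration and \eqref{eq:properties of concentrating sequences}) the contribution $\int_{B_r(0)}(e^{\alpha u_i^{n/(n-1)}}-1)|x|^{-\beta}$ would vanish, and combined with Lemma~\ref{lemma:limit over Bri same as Omega for subsequence} (or the analogous statement that the mass outside small balls vanishes) this would give $F_\Omega^\delta(0)=\lim F_\Omega(u_i)=0$, contradicting Remark~\ref{remark:F delta zero is not zero} which asserts $F_\Omega^\delta(0)>0$ in the borderline case $\alpha/\alpha_n+\beta/n=1$ we are in. So $\sup_{B_r(0)}u_i\ge 2\ge k_i$ for large $i$, giving \eqref{eq:lemma capacity arg. not empty Br}.

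Finally, for the smoothness of $\partial A$: $k_j$ was chosen from the interval $[1,2]$ but so far only so that the topological dichotomy holds; since $u_j$ is smooth, by Sard's theorem the set of regular values of $u_j$ has full measure, and the dichotomy ``every component meeting $B_r$ lies in $B_{2r}$'' is an open condition that persists under small perturbations of the level $k_j$ (the level sets $\{u_j\ge k\}$ vary continuously, and a neck crossing the fixed closed annulus is a closed condition in $k$). More carefully, I would first show the set of ``good'' levels $k$ (those for which the dichotomy holds) is relatively open in $[1,2]$ and, by the capacity argument, of positive measure; intersecting with the regular values of $u_j$ (full measure by Sard) and picking $k_j$ from this intersection makes $\{u_j\ge k_j\}$ a smooth manifold with boundary $\{u_j=k_j\}$. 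The main obstacle I expect is making the capacity/coarea lower bound fully rigorous --- in particular checking that ``a component meets $B_r$ but is not contained in $B_{2r}$'' really does force a definite amount of $n$-energy in the shell for \emph{almost every} level in $[1,2]$, not just for one level, so that integrating over $k\in[1,2]$ yields the contradiction with $\int_{\Omega\setminus B_r(0)}|\nabla u_i|^n\to0$; this is where one must invoke the monotonicity of the super-level sets in $k$ and a Fubini/coarea argument carefully. Everything else (the non-emptiness via Lemma~\ref{lemma:a.e. convergence and bounded by 1} and Remark~\ref{remark:F delta zero is not zero}, and the Sard step) is routine.
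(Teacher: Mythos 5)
Your high-level plan matches the paper closely: reduce to finding a single index $j$ and level $k_j$ for which the dichotomy holds, handle non-emptiness of $\{u_j\ge k_j\}\cap B_r$ via Lemma~\ref{lemma:a.e. convergence and bounded by 1} and Remark~\ref{remark:F delta zero is not zero}, and then pick a regular level by Sard. That part of the proposal is correct, and the monotonicity remark you make (the good dichotomy, once it holds at some level, persists for all larger levels) is exactly what allows Sard to finish; the paper streamlines this by proving the dichotomy directly at the extreme level $k=1$ so that it automatically holds on all of $[1,2]$.

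The genuine gap is in the capacity estimate, and you correctly flag it yourself as the weak point. The formula you invoke, $\operatorname{cap}_n(\overline{B_r},\re^n\setminus B_{2r})=\omega_{n-1}(\log 2)^{1-n}$, is the $n$-capacity of the \emph{full} ball $\overline{B_r}$ inside $B_{2r}$: it is a lower bound on $\int|\nabla u|^n$ only under the hypothesis that $u\ge 1$ on all of $B_r$. That is far from what is available here. All you know is that a single connected component $D_j$ of $\{u_j\ge 1\}$ contains a point in $B_r$ and a point outside $B_{2r}$; this $D_j$ can be an arbitrarily thin ``tentacle'', and the ball-condenser formula simply does not apply. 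The correct statement you need is that a \emph{continuum} crossing the annulus forces a definite amount of $n$-energy. This is true (it is a Loewner-type modulus estimate in conformal dimension), but it is not the ball-capacity formula and it requires a real argument; the coarea slicing in the level variable $k$ that you sketch does not obviously produce the bound either, since there is no lower bound on $\int_{\{u_j=k\}}|\nabla u_j|^{n-1}\,d\mathcal H^{n-1}$.

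What the paper actually does is a cleaner, self-contained route. After rotating so that the far point $b\in D_j\setminus B_{2r}$ lies on the positive $x_1$-axis, it uses connectedness of $D_j$ to produce, for every $x_1\in[r,2r]$, a point $(x_1,X'(x_1))\in D_j$ with $u_j(x_1,X'(x_1))\ge 1$, while $u_j(x_1,\cdot)$ vanishes outside a fixed ball $B'_{2M}(X'(x_1))\subset\re^{n-1}$. On each such slice the quantity $\int|\nabla' u_j(x_1,\cdot)|^n$ is bounded below by the $n$-capacity of a \emph{point} in a ball of $\re^{n-1}$, which is strictly positive precisely because the integrability exponent $n$ exceeds the ambient dimension $n-1$. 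Integrating over $x_1\in[r,2r]$ then gives $\int_{\Omega\setminus B_r}|\nabla u_j|^n\ge c(n,M)\,r>0$, contradicting concentration at $0$. Note also that the constant necessarily depends on the diameter bound $M$ of $\Omega$, not only on $n$ and $r$ as your $c_{n,r}$ suggests. If you want to keep your approach you would need to replace the ball-capacity estimate with a Loewner/modulus estimate for continua; the paper's $(n-1)$-dimensional slicing avoids that machinery entirely and is the intended argument.
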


\begin{proof}  It is sufficient to prove that there exists $j\in \mathbb{N}$ such that  \eqref{eq:lemma capacity arg. not empty Br} is satisfied for $k_j=2$ and \eqref{eq:lemma:capacity arg. Br and B2r} holds for the connected components of  $\{u_j\geq 1\},$ i.e. $k_j=1.$  This implies that \eqref{eq:lemma capacity arg. not empty Br} and \eqref{eq:lemma:capacity arg. Br and B2r} also hold for any $k\in [0,1],$ and hence, using Sard's theorem, one can choose $k_j\in [1,2]$ appropriately such that $A$ has smooth boundary in addition.

First note that for all $m\in\mathbb{N}$ there exists a $j\geq m$ such that
\eqref{eq:lemma capacity arg. not empty Br} must hold. If this is not the case, then Lemma \ref{lemma:limit over Bri same as Omega for subsequence} and Lemma \ref{lemma:a.e. convergence and bounded by 1} imply that
$$
  \lim_{i\to\infty}F_{\Omega}(u_i)\leq \lim_{i\to \infty}\int_{B_r} \frac{e^{\alpha u_i^\frac{n}{n-1}}-1}\dxb 
  \leq\lim_{i\to\infty}\int_{\{u_i\leq 2\}}\frac{e^{\alpha u_i^\frac{n}{n-1}}-1}\dxb=0,
$$
which is a contradiction to \eqref{eq:lemma:capacity argument maxim seq.} (Recall that $F_{\Omega}^{\delta}(0)>0,$ see Remark \ref{remark:F delta zero is not zero}).

Suppose now that \eqref{eq:lemma:capacity arg. Br and B2r} does not hold. We show that this leads to a contradiction, using a capacity argument in dimension $n-1$ (following the idea in \cite{Flucher book} Equation (2.12) page 15). 
In that case there exists for all $j\in\mathbb{N}$ a connected component $D_j$ of $\{u_j\geq 1\}$ and $a,b\in\Omega$ such that
$$
  a\in D_j\cap B_r\quad\text{ and }\quad b\in D_j\cap \Omega\backslash B_{2r}.
$$
For what follows we fix $j$ and omit the explicit dependence on $j$ (Note that $a$ and $b$ depend on $j$). Without loss of generality we can assume, by rotating the domain, that  $b=(b_1,0)$ and $b_1\geq 2r.$
Therefore, since $D_j$ is connected, for all $x_1\in [r,2r]$ there exists a $X' = (x_2, \cdots, x_n) =X'(x_1)\in\re^{n-1}$ such that $x=(x_1, X'(x_1))\in D_j.$ In particular $u_j(x)\geq 1.$
Since $\Omega$ is bounded, there exists an $M>0,$ which is independent of the rotation of the domain (and hence of $j$), such that 
$\Omega\subset B_M(0)$. In particular this implies that
\begin{equation}\label{eq:X prime smaller M}
  \left|X'(x_1)\right|\leq M\quad\text{ for all }x_1\in [r,2r].
\end{equation}
Let us extend $u_j$ by zero in $\re^n\backslash\Omega.$  Denote by
$$
  B'_R(y')\text{ ball of radius $R$ in $\re^{n-1}$ centered at $y'\in \re^{n-1}$}\quad\text{ and }\quad \nabla'u=\left( \frac{\partial u}{\partial x_2},\ldots,\frac{\partial u}{\partial x_n}\right).
$$
With this notation, using \eqref{eq:X prime smaller M}, we have $u_j\left(x_1,y\right)=0$ for $y$ outside  of $B'_{2M}\left(X'(x_1)\right)$ for all  $x_1\in (r,2r).$ Moreover $u_j(x_1,X'(x_1))\geq 1.$ 
Using now the properties of $n$-capacity in $n-1$ dimension, see for instance \cite{Heinonen and others} Example 2.12 pages 35--36,
\begin{align*}
 \int_{B'_{2M}\left(X'(x_1)\right)}\left|\nabla'u_j(x_1,y')\right|^n dy'
 \geq &
 \operatorname{cap}_n\left(\left\{X'(x_1)\right\},B_{2M}'\left(X'(x_1)\right)\right)
 \smallskip \\
 =& \operatorname{cap}_n\left(\left\{0\right\},B_{2M}'\left(0\right)\right)=c(n,M)>0,\quad\text{ for all }x_1\in (r,2r),
\end{align*}
for some positive constant $c(n,M)$ depending only on $n$ and $M.$ Hence, using also that $\Omega$ intersected with any plane where first coordinate equals $x_1$ is containded in $\{x_1\}\times B'_{M}(0),$ we get
\begin{align*}
 \int_{\Omega\backslash B_r(0)}|\nabla u_j|^n
 \geq &
 \int_{\Omega\backslash B_r(0)}|\nabla'u_j|^n
 \geq \int_r^{2r}\int_{\Omega\cap \{y_1=x_1\}}|\nabla'u_j(x_1,y')|^n dy'\, dx_1
 \smallskip \\
 =&
 \int_r^{2r}\int_{B'_{M}(0)}|\nabla'u_j(x_1,y')|^n dy'\, dx_1
 =  \int_r^{2r}\int_{B'_{2M}(X'(x_1))}|\nabla'u_j(x_1,y')|^n dy'\, dx_1
 \smallskip \\
 \geq & c(n,M)r.
\end{align*}
This implies that 
$$ 
  r\,c(n,M) \leq \int_{\Omega\backslash B_r}|\nabla u_j|^n.
$$
But this cannot hold true for all $j,$ since $u_j$ concentrates at $0$.
\end{proof}
\smallskip

The next lemma is about the first modification of the sequence $\{u_i\}$ given in Proposition \ref{proposition:main domain to ball}.
\begin{lemma}
\label{lemma:main properties step 1-3 in Flucher}
Let $\{u_i\}\subset \mathcal{B}_1(\Omega)\cap C^{\infty}(\Omega) $ be a sequence which concentrates at $0\in\Omega$ and satisfies
$$
  \lim_{i\to\infty}F_{\Omega}(u_i)=F_{\Omega}^{\delta}(0).
$$
Then there exists  a sequence $\{v_i\}\subset \mathcal{B}_1(\Omega)$ and sequences $r_i>0,$ with $r_i\to 0$  and  $\{k_i\}\in [1,2]$
 such that 
\begin{align*}
 \{v_i\geq k_i\}\subset B_{2r_i},\quad \Delta_n v_i=0\quad\text{in }\{v_i<k_i\}.
\end{align*}
Moreover $v_i$ has the properties: there exist a sequence $\{\lambda_i\}\subset\re,$ $\lambda_i>0$ such that  
\begin{align*}
 &\text{(i)}\quad \lim_{i\to\infty}\lambda_i=\infty 
 \smallskip \\
 &\text{(ii)}\quad \lim_{i\to\infty}v_i(y)=0\quad\text{ for all $y$ in }\Omega\backslash\{0\} 
 \smallskip \\
 &\text{(iii)}\quad \lambda_i v_i\to G_{\Omega,0}\quad\text{in }C^1_{\text{loc}}(\Omega\backslash \{0\})
 \smallskip \\
 &\text{(iv)} \quad\lim_{i\to\infty}F_{\Omega}(v_i)=F_{\Omega}^{\delta}(0).
\end{align*}
\end{lemma}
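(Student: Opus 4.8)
\textbf{Proof plan for Lemma \ref{lemma:main properties step 1-3 in Flucher}.}
The idea is to follow Flucher's three-step modification. \emph{Step 1 (truncation and choice of level).} Using Lemma \ref{lemma:the capacity argument}, for each $r=1/m$ pick an index $j=j(m)$ (so $j(m)\to\infty$) and a level $k_{j}\in[1,2]$ with smooth regular value such that $\{u_{j}\geq k_{j}\}\cap B_{r}\neq\emptyset$ and every connected component of $\{u_{j}\geq k_{j}\}$ meeting $B_{r}$ lies in $B_{2r}$. Discarding from $\{u_j\geq k_j\}$ the components \emph{not} meeting $B_r$ and lowering the function to $k_j$ on those components, we may assume $\{u_{j}\geq k_{j}\}\subset B_{2r}$; this only decreases the Dirichlet norm and does not decrease $F_{\Omega}$, since on the removed region $u_j\le k_j$ is bounded, so by Lemma \ref{lemma:a.e. convergence and bounded by 1} the contribution of that region to $F_{\Omega}$ tends to $0$. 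Relabel this subsequence and set $r_i\to 0$, $k_i\in[1,2]$ accordingly.

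\emph{Step 2 ($n$-harmonic replacement).} On the open set $\{u_i<k_i\}$ replace $u_i$ by the (unique) $n$-harmonic function $w_i$ with boundary data $k_i$ on $\partial\{u_i<k_i\}$ and $0$ on $\partial\Omega$, keeping $v_i:=u_i$ on $\{u_i\ge k_i\}$. By the variational characterization of $n$-harmonic functions (minimization of the Dirichlet integral under its own boundary data, see the Appendix), $\|\nabla v_i\|_{L^n(\Omega)}\le\|\nabla u_i\|_{L^n(\Omega)}\le 1$, so $v_i\in\mathcal B_1(\Omega)$; and since $v_i\le u_i$ fails in general but $v_i$ and $u_i$ agree on $\{u_i\ge k_i\}$ while on $\{u_i<k_i\}$ the function $v_i$ is bounded by $k_i\le 2$, Lemma \ref{lemma:a.e. convergence and bounded by 1} gives that the difference $|F_\Omega(v_i)-F_\Omega(u_i)|\to 0$, because $F_\Omega$ only ``sees'' the set $\{v_i\ge$ large$\}$ in the limit; hence (iv) holds. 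By construction $\{v_i\ge k_i\}\subset B_{2r_i}$ and $\Delta_n v_i=0$ in $\{v_i<k_i\}$.

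\emph{Step 3 (normalization and convergence to the Green's function).} Because $v_i$ concentrates at $0$ (it still has Dirichlet norm tending to $1$, using $\|\nabla v_i\|_{L^n}\le 1$ together with $F_\Omega(v_i)\to F_\Omega^\delta(0)>0$ and the fact that a sequence whose Dirichlet norm stays bounded away from $1$ would, by Lemma \ref{lemma:compactness in the interior} and the concentration-compactness alternative applied after rescaling, give $F_\Omega(v_i)\to F_\Omega(v)$ for some $v$ with small norm, contradicting the strict positivity), and since $\{v_i\ge k_i\}\subset B_{2r_i}$ with $r_i\to 0$, we get (ii): $v_i\to 0$ pointwise on $\Omega\setminus\{0\}$. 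For (i) and (iii), normalize: on $\Omega\setminus B_{2r_i}$, $v_i$ is $n$-harmonic, vanishes on $\partial\Omega$, and equals $k_i\in[1,2]$ on $\partial B_{2r_i}$-ish level; comparing with the Green's function via the maximum principle and Harnack gives that $\lambda_i:=1/\big(\text{suitable normalization of } v_i\big)$, e.g. $\lambda_i$ chosen so that $\lambda_i v_i$ matches $G_{\Omega,0}$ at one fixed interior point, satisfies $\lambda_i\to\infty$; then the interior $C^{1,\alpha}_{loc}$ estimates for the $n$-Laplacian (Appendix) applied to the bounded-in-$W^{1,n}$, locally uniformly bounded sequence $\lambda_i v_i$ extract a subsequence converging in $C^1_{loc}(\Omega\setminus\{0\})$ to an $n$-harmonic function on $\Omega\setminus\{0\}$ vanishing on $\partial\Omega$ with the correct logarithmic singularity at $0$ (the singularity strength is pinned down by $\int_{\{v_i=t\}}|\nabla v_i|^{n-1}$-type identities, cf. Proposition \ref{proposition:properties of Green's function}(b) rescaled), which by uniqueness of the $n$-Green's function must be $G_{\Omega,0}$; since the limit is unique, the whole sequence converges, giving (iii).

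\emph{Main obstacle.} The delicate point is Step 3: establishing $\lambda_i\to\infty$ and the $C^1_{loc}$ convergence $\lambda_i v_i\to G_{\Omega,0}$ rigorously, because the $n$-Laplacian is nonlinear, so one cannot simply invoke linear elliptic theory; one needs the quantitative regularity and compactness results for $n$-harmonic functions collected in the Appendix, the correct normalization so that the limit has the right singular profile (identifying the ``mass'' $1$ at the singularity through the flux identity), and a Harnack-type argument to rule out degeneration of $\lambda_i v_i$ near $0$. Verifying that the singularity of the limit is exactly $-\cnn^{-1}\log|y|$ and not a smaller multiple is the crux, and it relies on the fact that the level-set flux $\int_{\{v_i<t\}}|\nabla v_i|^{n-1}\,\dH=1$ passes to the limit.
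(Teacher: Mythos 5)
Your overall strategy coincides with the paper's: capacity argument and level selection (Lemma \ref{lemma:the capacity argument}), $n$-harmonic replacement outside the inner component, normalization at a fixed interior point to define $\lambda_i$, then Harnack plus interior $C^{1,\alpha}$ estimates and Bocher's theorem (Corollary \ref{P6, Bocher Theorem}) to identify the limit. However, two genuine gaps remain.

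First, property (ii) — pointwise $v_i(y)\to 0$ for every $y\in\Omega\setminus\{0\}$ — is not established by the facts you invoke. That $\{v_i\ge k_i\}\subset B_{2r_i}$ only gives $v_i<k_i\le 2$ off a shrinking ball, and concentration yields $v_i\to 0$ only in $L^q$ (hence a.e.\ along a subsequence), not at a prescribed point $y$. Since $\lambda_i$ is defined via $v_i(y)$, you need genuine pointwise decay. The paper achieves this by a comparison-principle argument with an explicit radial barrier $\psi_i=\frac{2}{\log(2r_i/M)}\log(|x|/M)$ on $B_M\setminus\overline{B_{2r_i}}$: $\psi_i$ is $n$-harmonic, dominates $v_i$ on the annulus boundary, hence $v_i\le\psi_i$ on the annulus, and $\psi_i(y)\to 0$ as $r_i\to 0$. (One could alternatively upgrade a.e.\ convergence to pointwise convergence via local $C^{1,\alpha}$ compactness, but some argument is needed; the barrier is cleanest.)

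Second, and more seriously: identifying the $C^1_{loc}(\Omega\setminus\{0\})$ limit $g$ of $\lambda_i v_i$ as $G_{\Omega,0}$ via Bocher's theorem (or via ``uniqueness of the Green's function'') requires first knowing that $g=0$ on $\partial\Omega$. This does not follow from $C^1_{loc}$ convergence in the open set — each $\lambda_i v_i$ vanishes on $\partial\Omega$, but $\lambda_i\to\infty$ while interior convergence gives no control near the boundary. The paper devotes an entire step to this (Step 4), combining a boundary Moser iteration (Lemma \ref{lemma1:Moser iteration}, Remark \ref{remark:locality Lieberman hypothesis}) with a trace-extension/Poincar\'e argument to get uniform $L^\infty$ bounds on a boundary collar, and then invokes regularity up to the boundary (Theorem \ref{P2, Existence and regularity}) to get $C^{1,\alpha}(\overline{\Omega_a})$ bounds and pass to the limit including on $\partial\Omega$. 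Your proposal does not address this at all, and without it the conclusion $g=G_{\Omega,0}$ (and hence (iii)) is unsupported. A smaller slip: $\lambda_i v_i$ is \emph{not} bounded in $W^{1,n}(\Omega)$ (its Dirichlet norm is comparable to $\lambda_i$); the uniform $C^{1,\alpha}_{loc}$ bounds come from Harnack-provided local $C^0$ bounds plus Tolksdorf's estimate (Theorem \ref{P5, Uniform Regularity}), not from a $W^{1,n}$ bound. Your flux-identity idea for pinning the singularity strength is a plausible alternative to the normalization-plus-Bocher route, but as written it too would need a careful passage to the limit and, in any case, would not substitute for the boundary-vanishing step.
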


\begin{proof}
\textit{Step 1.} Take a sequence of positive real numbers $r_i$ such that $\lim_{i\to\infty}r_i=0$ and choose a subsequence of $u_i$, using Lemma \ref{lemma:limit over Bri same as Omega for subsequence}, such that
\begin{equation}
 \label{eq:proof:lemma st13 Bri}
  F_{\Omega}^{\delta}(0)=\lim_{i\to\infty}F_{\Omega}(u_i)=\lim_{i\to\infty} \int_{B_{r_i}}\frac{e^{\alpha u_i^\frac{n}{n-1}}-1}\dxb.
\end{equation}
Choosing again a subsequence we can assume by Lemma \ref{lemma:the capacity argument} that there exist $k_i\in [1,2]$ such that all connected components $A$ of $\{u_i\geq k_i\}$ which intersect $B_{r_i}$ are contained in $B_{2r_i}.$ We define $A_i$ as the union of all such $A.$ We also know from Lemma \ref{lemma:the capacity argument} that $A_i$ is not empty. Let $w_i\in W^{1,n}(\Omega\backslash\overline{A}_i)$ be the solution of, see Theorem \ref{P2, Existence and regularity},
$$
  \left\{\begin{array}{c}
          \Delta_n w_i=0\quad\text{in }\Omega\backslash \overline{A}_i 
          \smallskip \\
          w_i=0\quad\text{on }\delomega,\quad w_i=k_i\quad\text{on }\partial A_i.
         \end{array}\right.
$$
We now define $v_i\in W^{1,n}_0(\Omega)$ as
$$
  v_i=\left\{\begin{array}{rl}
               u_i&\text{ in }A_i \smallskip \\
               w_i&\text{ in }\Omega\backslash A_i\,.
              \end{array}\right.
$$
Since $n$-harmonic functions minimize the $n$-Dirichlet integral we have $\|\nabla v_i\|_{L^n}\leq \|\nabla u_i\|_{L^n}.$ 
Thus we have constructed a sequence which has the properties: (we have used Theorem \ref{P1, Strong maximum principle} in the second property)
\begin{align*}
 \{v_i\geq k_i\}\subset B_{2r_i},\quad \Delta_n v_i=0\quad\text{in }\{v_i<k_i\}\quad\text{and}\quad
 \|\nabla v_i\|_{L^n}\leq 1.
\end{align*}

\textit{Step 2.}  We will show in this Step that for all $y\in\Omega\backslash\{0\}$ we have $v_i(y)>0$ for all $i$ large enough and $\lim_{i\to\infty}v_i(y)=0.$
The fact that $v_i(y)>0$ follows from the maximum principle Theorem \ref{P1, Strong maximum principle}.
Since $\Omega$ is bounded there exists $M>0$ such that $\Omegabar\subset B_M.$ Define $W_i=B_M\backslash\overline{B_{2r_i}}$ and let $\psi_i$ be the solution of 
$$
  \left\{\begin{array}{c}
          \Delta_n \psi_i=0\quad\text{ in }W_i 
          \smallskip \\
          \psi_i=2\;\text{ on }\partial B_{2r_i}\quad\text{ and }
          \quad\psi_i=0\;\text{ on }\partial B_M.
         \end{array}\right.
$$
The function $\psi_i$ can be given explicitly:
$$
  \psi_i=\frac{2}{\log\big(\frac{2r_i}{M}\big)}\log\left(\frac{|x|}{M}\right).
$$
Recall that $k_i\in[1,2]$ and note that
\begin{align*}
 &\psi_i>0\,\text{ and }\,v_i=0\quad\text{ on }\delomega,
 \smallskip \\
 &\psi_i=2\,\text{ and }\,v_i<k_i\leq 2\quad\text{ on }\partial B_{2r_i},
\end{align*}
and thus $\psi_i-v_i>0$ on $\partial W_i$. Since $v_i$ is also harmonic in $W_i$ the comparison principle (Theorem \ref{P3, Comparison Principle}) implies that $v_i\leq \psi_i$ in $W_i$.
For $i$ big enough $y\in W_i$ and the claim of Step 2 follows from the fact that $\lim_{i\to\infty}\psi_i(y)=0.$ 
\smallskip 

\textit{Step 3.} Choose $y\in \Omega\backslash\{0\}$ and define $\lambda_i$ by
\begin{equation}\label{eq:lemma proof:def of lambdai by uiy}
  \lambda_i=\frac{G_{\Omega,0}(y)}{v_i(y)}\quad\Leftrightarrow\quad \lambda_iv_i(y)=G_{\Omega,0}(y)
\end{equation}
In view of Step 2 this is well defined, $\lambda_i>0$ and
$$
  \lim_{i\to\infty}\lambda_i=\infty.
$$
Let $y\in K_1\subset \Omega\backslash\{0\}$ be a compact set. Choose another compact set $K_2,$ such that $K_1\subset\subset K_2\subset \Omega\backslash\{0\}.$
Applying Harnack inequality (Theorem \ref{P4, Harnack inequality}) on $K_2$ we get that there exist $c_1,c_2>0,$ such that 
\begin{equation}
 \label{vi uniformly bounded}
  c_1|G_{\Omega,0}(y)|\leq |\lambda_i v_i(x)|\leq c_2|G_{\Omega,0}(y)|,\quad\forall\,x\in K_2\,\text{ and }\,\forall\,i\text{ large enough}.
\end{equation}
Hence the sequence $\lambda_iv_i$ is uniformly bounded in the $C^0(K_2)$ norm. It follows from Theorem \ref{P5, Uniform Regularity} that 
\begin{equation}
 \label{vi uniformly bounded in C1 alpha norm}
  \lambda_i v_i\quad\text{ is uniformly bounded in the $C^{1,\alpha}(K_1)$ norm}
\end{equation}
for some $0<\alpha.$ Using the compact embedding $C^{1,\alpha}(K_1)\hookrightarrow C^1(K_1)$ we obtain that there exists $g\in C^1(K_1)$ and a subsequence $v_i$ with
$$
  g_i:=\lambda_i v_i\to g\quad\text{ in }C^1(K_1).
$$ 
It follows from \eqref{eq:lemma proof:def of lambdai by uiy} and Corollary \ref{P6, Bocher Theorem} that $g=G_{\Omega,0},$ once we have shown that $g=0$ on $\delomega.$ We prove this claim in the next step.
\smallskip

\textit{Step 4.} We show now that $g=0$ on $\delomega.$ Define $\Omega_{\epsilon}$  as 
$$
  \Omega_{\epsilon}=\left\{x\in\Omega:\, 0<\operatorname{dist}(x,\delomega)<\epsilon\right\},
$$
where $\epsilon$ will be chosen later small enough, and
$$
  \partial\Omega_{\epsilon}=\delomega\cup \Gamma_{\epsilon}\quad\text{ where }\quad \Gamma_{\epsilon}=\left\{ x\in \Omega:\, \operatorname{dist}(x,\delomega)=\epsilon\right\}.
$$

\textit{Step 4.1.} We claim that $g_i$ are uniformly bounded on $\Omega_{\epsilon}.$
Note that $g_i=0$ on $\delomega.$ So for small enough $\epsilon$ it follows from Lemma \ref{lemma1:Moser iteration} and Remark \ref{remark:locality Lieberman hypothesis} (ii) (as in the proof of Proposition \ref{proposition:hypothesis Liebermann satisfied}) that
\begin{equation}
 \label{gi Linf norm bound by Ln}
  \|g_i\|_{L^{\infty}(\Omega_{\epsilon})}\leq C(\Omega_{\epsilon})\|g_i\|_{L^n(\Omega_{\epsilon})}.
\end{equation}
We now fix such an $\epsilon>0$ for which \eqref{gi Linf norm bound by Ln} holds and we can also assume that $\Omega_{\epsilon}$ is a smooth set. It follows from \eqref{vi uniformly bounded in C1 alpha norm} that there exists $\Lambda_1=\Lambda_1(K_1,K_2)$ (chosing $K_1$ such that $\Gamma_{\epsilon}\subset K_1$), such that
$$
  \|g_i\|_{C^{1,\alpha}(\Gamma_{\epsilon})}\leq \Lambda_1\quad\text{ for all $i$ big enough},
$$
and hence also for some $\Lambda_2>0$
$$
  \|g_i\|_{W^{1-\frac{1}{n},n}(\Gamma_{\epsilon})}\leq \Lambda_2\quad\text{ for all $i$ big enough}.
$$
Chose now a bounded right inverse $T$ of the trace operator on $\delomega_{\epsilon}$ as
$$
  T:W^{1-\frac{1}{n}}(\partial\Omega_{\epsilon})\to W^{1,n}(\Omega_{\epsilon})
$$
and apply it to $g_i$ restricted to $\delomega_{\epsilon}.$ Hence there exists $h_i\in W^{1,n}(\Omega_{\epsilon}),$ $h_i=T(g_i)$ such that 
$$
  h_i=g_i\quad\text{ on }\delomega_{\epsilon}
$$
and 
\begin{equation}
 \label{hi uniformly bounded}
  \|h_i\|_{W^{1,n}(\Omega_{\epsilon})}\leq C_1(\Omega_{\epsilon})\|g_i\|_{W^{1-\frac{1}{n},n}(\delomega_{\epsilon})}\leq C_1(\Omega_{\epsilon})\Lambda_2\,.
\end{equation}
As in the proof of Proposition \ref{proposition:hypothesis Liebermann satisfied}, since $\Delta_n g_i=0$ in $\Omega_{\epsilon}$ for $i$ big enough,
\begin{align*}
 \|g_i-h_i\|_{L^n(\Omega_{\epsilon})}\leq C_2(\Omega_{\epsilon})\|\nabla g_i-\nabla h_i\|_{L^n(\Omega_{\epsilon})}
 \leq
 C_2\left(\|\nabla g_i\|_{L^n(\Omega_{\epsilon})}+\|\nabla h_i\|_{L^n(\Omega_{\epsilon})}\right) 
 \smallskip \\
 \leq 2 C_2 \|\nabla h_i\|_{L^n(\Omega_{\epsilon})}\leq 2 C_1 C_2\Lambda_2\,.
\end{align*}
Since the $h_i$ are also uniformly bounded by \eqref{hi uniformly bounded} it follows the $g_i$ are uniformly bounded in the $L^{\infty}(\Omega_{\epsilon})$ norm. This shows Step 4.1.
\smallskip

\textit{Step 4.2.} We now conclude that $g=0$ on $\delomega.$
Fix some $a>0$ so that $B_a(0)\subset\subset\Omega$ and define $\Omega_a=\Omega\backslash \overline{B_a}(0).$ 
Note that $g_i$ is uniformly bounded on $\partial B_a(0)$ in the $C^{1,\alpha}$ norm, using again \eqref{vi uniformly bounded in C1 alpha norm}, i.e. for some $\Lambda_3>0$ we have
$$
  \|g_i\|_{C^{1,\alpha}(\partial B_a(0))}\leq \Lambda_3\,.
$$
On the compact set $\Omega\backslash (B_a(0)\cup\Omega_{\epsilon})$ $g_i$ is uniformly bounded by \eqref{vi uniformly bounded}. Thus, together with Step 4.1 this shows that there exists a constant $M_0$ independent of $i$ such that
$$
  \|g_i\|_{L^{\infty}(\Omega_a)}\leq M_0\,.
$$
Note that
$$
  \Delta_n g_i=0\quad\text{ in }\Omega_a\,.
$$
So it follows from Theorem \ref{P2, Existence and regularity} that
$$
  \|g_i\|_{C^{1,\alpha}(\overline{\Omega_a})}\leq C \left(M_0,\Lambda_3\right).
$$
It follows that for some subsequence $g_i\to g$ in $C^1(\overline{\Omega_a})$ from which it follows that $g=0$ on $\delomega.$
\smallskip

\textit{Step 5.} It remains to prove (iv). Recall that $v_i\leq k_i$ in $\Omega\backslash A_i$. We therefore obtain, using Lemma \ref{lemma:a.e. convergence and bounded by 1} twice and the definition of $A_i$ that
\begin{align*}
 &\lim_{i\to\infty}\intomega\frac{e^{\alpha v_i^\frac{n}{n-1}}-1}\dxb
 =\lim_{i\to\infty}\int_{A_i}\frac{e^{\alpha u_i^\frac{n}{n-1}}-1}\dxb
 \geq \lim_{i\to\infty}\int_{A_i\cap B_{r_i}}\frac{e^{\alpha u_i^\frac{n}{n-1}}-1}\dxb
  \smallskip \\
 =&\lim_{i\to\infty}\int_{ \{u_i\geq k_i\}\cap B_{r_i}}\frac{e^{\alpha u_i^\frac{n}{n-1}}-1}\dxb=\lim_{i\to\infty}\int_{B_{r_i}}\frac{e^{\alpha u_i^\frac{n}{n-1}}-1}\dxb=F_{\Omega}^{\delta}(0),
\end{align*}
where we have used \eqref{eq:proof:lemma st13 Bri} in the last equality.
\end{proof}
\smallskip

The next lemma is about the second modification of the sequence $\{u_i\}$ given in Proposition \ref{proposition:main domain to ball}, following the first modification given by Lemma \ref{lemma:main properties step 1-3 in Flucher}.

\begin{lemma}
\label{lemma:the one we sent to Struwe}
Let $\{u_i\}\subset W_0^{1,n}(\Omega)$ be a sequence and $\lambda_i$ a sequence in $\re$ such that $\lambda_i\to\infty,$  
$$
  \lambda_i u_i\to G_{\Omega,0}\quad\text{ in }C^0_{loc}({\Omega}\backslash\{0\})\quad\text{ and }\quad \Delta_n u_i=0\text{ in }\{u_i<1\}.
$$
Then there exists a subsequence $\lambda_{i_l}$ and a sequence $\{v_l\}\subset W^{1,n}_0(\Omega)$ such that the following properties hold true:

(a) $\lambda_{i_l}\geq l$ 

(b) The sets $\left\{v_l\geq  l/\lambda_{i_l}\right\}$
are approximately small balls at $0$ as $l\to\infty.$

(c) $v_l(x)\to 0$ as $l\to\infty$ for every $x$ in $\Omega\backslash\{0\}.$

(d) For every $l$
$$
  \intomega|\nabla v_l|^n\leq\intomega|\nabla u_{i_l}|^n.
$$

(e) The inequality $v_l\geq u_{i_l}$ holds in $\Omega.$ In particular  
$F_{\Omega}( v_l)\geq F_{\Omega} ( u_{i_l}).$
\end{lemma}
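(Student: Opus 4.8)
The plan is to pass to a suitable subsequence and then simply take $v_l:=u_{i_l}$ (equivalently: the $n$-harmonic replacement of $u_{i_l}$ on the region $\{u_{i_l}<l/\lambda_{i_l}\}$, which by hypothesis already \emph{is} $u_{i_l}$ there). With this choice property (a) holds by construction, (c) holds because for $x\in\Omega\setminus\{0\}$ one has $\lambda_{i_l}u_{i_l}(x)\to G_{\Omega,0}(x)<\infty$ while $\lambda_{i_l}\to\infty$, and (d), (e) hold with equality. Hence the entire content of the lemma is property (b): that $\Sigma_l:=\{u_{i_l}\geq l/\lambda_{i_l}\}$ is an approximately small ball at $0$.

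To fix the subsequence: since $\lambda_i\to\infty$ and $\lambda_iu_i\to G_{\Omega,0}$ uniformly on every compact subset of $\Omega\setminus\{0\}$, and since $\{l-1\leq G_{\Omega,0}\leq l+1\}$ is such a compact set for each fixed $l$, one can pick indices $i_1<i_2<\cdots$ with $\lambda_{i_l}\geq l$,
\[
  \sup_{\{l-1\leq G_{\Omega,0}\leq l+1\}}\bigl|\lambda_{i_l}u_{i_l}-G_{\Omega,0}\bigr|<\eta_l:=1/l,\qquad \{u_{i_l}\geq1\}\subseteq\{G_{\Omega,0}>l+1\}.
\]
The last inclusion is achievable because a point of $\{u_i\geq1\}$ lying in a fixed compact subset $K$ of $\Omega\setminus\{0\}$ would force $\lambda_iu_i\geq\lambda_i\to\infty$ on $K$, contradicting the convergence; the part of $\{u_i\geq1\}$ near $\partial\Omega$ is excluded using that $u_i$ is $n$-harmonic near $\partial\Omega$ together with the comparison principle (in the setting where this lemma is applied it also follows at once from the inclusion $\{u_i\geq k_i\}\subseteq B_{2r_i}$ supplied by Lemma \ref{lemma:main properties step 1-3 in Flucher}).

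Now I would show $\{G_{\Omega,0}\geq l+\eta_l\}\subseteq\Sigma_l\subseteq\{G_{\Omega,0}\geq l-\eta_l\}$ by comparing $u_{i_l}$ with the constant $l/\lambda_{i_l}$ on $n$-harmonic annular regions (note $l/\lambda_{i_l}\leq1$, so $u_{i_l}$ is $n$-harmonic on $\{u_{i_l}<l/\lambda_{i_l}\}$). For the right inclusion: on $\{G_{\Omega,0}<l-\eta_l\}$ we have $u_{i_l}<1$ (by the chosen inclusion), so $u_{i_l}$ is $n$-harmonic there; on $\partial\Omega$ it vanishes, and on $\{G_{\Omega,0}=l-\eta_l\}$ one has $\lambda_{i_l}u_{i_l}\leq(l-\eta_l)+\eta_l=l$, so by the comparison principle (Theorem \ref{P3, Comparison Principle}) $u_{i_l}\leq l/\lambda_{i_l}$ on $\{G_{\Omega,0}<l-\eta_l\}$, whence $\Sigma_l\cap\{G_{\Omega,0}<l-\eta_l\}=\emptyset$. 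For the left inclusion: on $\{G_{\Omega,0}>l+\eta_l\}\cap\{u_{i_l}<1\}$ the function $u_{i_l}$ is $n$-harmonic and on the boundary of this set $u_{i_l}\geq l/\lambda_{i_l}$ (trivially on $\{u_{i_l}=1\}$, and on $\{G_{\Omega,0}=l+\eta_l\}$ because $\lambda_{i_l}u_{i_l}\geq(l+\eta_l)-\eta_l=l$ there), so $u_{i_l}\geq l/\lambda_{i_l}$ on this set; since $u_{i_l}\geq1\geq l/\lambda_{i_l}$ on $\{u_{i_l}\geq1\}$ as well, $\{G_{\Omega,0}\geq l+\eta_l\}\subseteq\Sigma_l$.

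Finally, applying Proposition \ref{proposition:properties of Green's function}(e) with the sequences $l\mp\eta_l\to\infty$, the sets $\{G_{\Omega,0}>l\mp\eta_l\}$ are approximately small balls at $0$ of radii $I_{\Omega}(0)e^{-\cnn(l\mp\eta_l)}=I_{\Omega}(0)e^{-\cnn l}(1+o(1))$ as $l\to\infty$, the error vanishing because $\eta_l\to0$; being squeezed between two approximately small balls whose radii agree up to a factor $1+o(1)$, $\Sigma_l=\{v_l\geq l/\lambda_{i_l}\}$ is itself an approximately small ball at $0$ of radius $I_{\Omega}(0)e^{-\cnn l}$, which is (b). The step I expect to be most delicate is the bookkeeping in the choice of $i_l$: the uniform convergence $\lambda_iu_i\to G_{\Omega,0}$ has to be invoked on shells $\{l-1\leq G_{\Omega,0}\leq l+1\}$ that themselves drift toward the origin as $l\to\infty$, and one must rule out pieces of $\{u_i\geq1\}$ near $\partial\Omega$, where the hypothesis only gives convergence in $C^0_{loc}(\Omega\setminus\{0\})$ and not up to the boundary.
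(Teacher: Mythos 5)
Your proposal takes $v_l:=u_{i_l}$, which reduces (d), (e) to identities and compresses the whole lemma into property (b). This is an economical idea, and the squeezing argument via comparison with constants on level sets of $G_{\Omega,0}$ is clean once one has the inclusion $\{u_{i_l}\geq 1\}\subset\{G_{\Omega,0}>l+1\}$. However, that inclusion is precisely where the gap lies, and you are right to flag it as the delicate step. The $C^0_{loc}(\Omega\setminus\{0\})$ convergence together with $\lambda_i\to\infty$ gives $u_i\to0$ uniformly on every \emph{fixed} compact subset of $\Omega\setminus\{0\}$, hence rules out components of $\{u_i\geq 1\}$ that stay in such a compact set; but a component drifting to $\partial\Omega$ as $i\to\infty$ escapes every fixed compact set and is not excluded by this argument. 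Your alternative sketch via the comparison principle near $\partial\Omega$ does not close the gap either: on the set $D=\Omega\setminus\overline{\{G>l+1\}}$ the function $u_i$ is $n$-harmonic only on $D\cap\{u_i<1\}$, whose boundary includes $\{u_i=1\}$ itself, and a maximum principle on that region only yields $u_i\leq1$ (which is tautological there), not $u_i<1$ throughout $D$. So, as a proof of the lemma from its literal hypotheses, the argument has a genuine hole; it becomes rigorous only if the extra information $\{u_i\geq k_i\}\subset B_{2r_i}$ from Lemma \ref{lemma:main properties step 1-3 in Flucher} (or some surrogate) is imported, which then also requires some bookkeeping because $k_i\in[1,2]$ gives $\{u_i\geq 2\}\subset B_{2r_i}$ rather than $\{u_i\geq1\}\subset B_{2r_i}$ directly.

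A further soft signal that your route differs from the paper's: properties (d) and (e) are stated as one-sided inequalities and the lemma is phrased as producing a new sequence $\{v_l\}$, which strongly suggests that the authors' $v_l$ is a nontrivial modification of $u_{i_l}$ (for instance a construction that forcibly regularizes the low superlevel sets and for which (b) holds by design, with the strong maximum principle, Theorem \ref{P1, Strong maximum principle}, used to verify (e)). The paper only points to the two-dimensional version in \cite{Csato Roy Calc var Pde}, so I cannot match the proofs line by line, but your choice $v_l=u_{i_l}$ is in tension with the way the statement is formulated, and independently of that the inclusion $\{u_{i_l}\geq1\}\subset\{G_{\Omega,0}>l+1\}$ needs a real proof rather than the heuristic currently in place.
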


\begin{proof}
The proof is exactly the same as in the $2$-dimensional case, since one can use the strong maximum principle, Theorem \ref{P1, Strong maximum principle}, for $n$-harmonic functions.
\end{proof}
\smallskip

After having modified the sequence $\{u_i\}$ given in Proposition \ref{proposition:main domain to ball} in the two previous lemmas, we finally construct the appropriate corresponding sequence $\{v_i\}\subset W_{0,rad}^{1,n}(B_1).$  This is contained in the following lemma.

\begin{lemma}\label{main lemma si instead of 1. domain to ball}
Let $\{u_i\}\subset W_0^{1,n}(\Omega)$ and $\{s_i\}\subset\re$ be sequences with the following properties:
$$
  s_i\leq 1\quad\forall\,i\in\mathbb{N},
$$
the sets $\{u_i\geq s_i\}$ are approximately small balls at $0$ as $i\to\infty$ and
moreover suppose that pointwise $u_i(x)\to 0$ for all $x\in\Omega\backslash\{0\}.$
Then there exists a sequence $\{v_i\}\subset W^{1,n}_{0,rad}(B_1)$ such that for all $i$
$$
  \|\nabla v_i\|_{L^n(B_1)}\leq \|\nabla u_i\|_{L^n(\Omega)}
$$
and, assuming that the left hand side limit exists, 
$$
  \lim_{i\to\infty}F_{\Omega}(u_i)\leq 
  I_{\Omega}(0)^{n-\beta}\lim\inf_{i\to\infty}F_{B_1}(v_i).
$$
Moreover $v_i(x)\to 0$ for all $x\in B_1\backslash \{0\}$ and if $v_i$ concentrates at some $x_0\in B_1,$ then $x_0=0.$
\end{lemma}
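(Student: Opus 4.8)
The plan is to produce $v_i$ as a radially decreasing function on $B_1$ which is a rescaled Schwarz symmetrization of $u_i$ on the small ball $\{u_i\ge s_i\}$, capped off on the remaining annulus by the logarithmic ($n$-harmonic) function. Throughout I may assume $u_i\ge 0$, and, by discarding a trivial subsequence if $\liminf_i\|\nabla u_i\|_{L^n}=0$, that $\|\nabla u_i\|_{L^n}$ is bounded below. The starting point is the layer--cake identity
$$
  F_\Omega(u_i)=\int_0^\infty g(t)\Big(\int_{\{u_i>t\}}|y|^{-\beta}\,dy\Big)\,dt,\qquad g(t):=\frac{d}{dt}\,e^{\alpha t^{n/(n-1)}},
$$
and the analogous one for $F_{B_1}$. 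By Lemma~\ref{lemma:a.e. convergence and bounded by 1}, applied with the bounded sequence $s_i$, the set $\{u_i\le s_i\}$ — and hence the range of levels $t<s_i$ — contributes negligibly, so $\lim_i F_\Omega(u_i)=\lim_i\int_{s_i}^\infty g(t)\big(\int_{\{u_i>t\}}|y|^{-\beta}\big)\,dt$.

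For the inner part let $u_i^\ast$ be the Schwarz symmetrization of $u_i$, write $\{u_i^\ast>t\}=B_{r_i(t)}$, set $\gamma_i=r_i(s_i)$ and $\rho_i=\gamma_i/I_\Omega(0)$; then $\rho_i\in(0,1)$ for $i$ large since the sets $\{u_i\ge s_i\}$ shrink to $\{0\}$. Define $v_i(z)=u_i^\ast(I_\Omega(0)\,z)$ for $|z|\le\rho_i$. Dilation invariance of the $n$--Dirichlet integral gives $\int_{B_{\rho_i}}|\nabla v_i|^n=\int_{B_{\gamma_i}}|\nabla u_i^\ast|^n=\int_{\{u_i^\ast\ge s_i\}}|\nabla u_i^\ast|^n\le\int_{\{u_i\ge s_i\}}|\nabla u_i|^n$ by Proposition~\ref{prop:Hardy littlewoood modified}(ii), and for every $t\ge s_i$ one has $\{v_i>t\}=B_{r_i(t)/I_\Omega(0)}$, whence
$$
  \int_{\{v_i>t\}}|z|^{-\beta}\,dz=\frac{1}{I_\Omega(0)^{n-\beta}}\int_{\{u_i^\ast>t\}}|y|^{-\beta}\,dy\ \ge\ \frac{1}{I_\Omega(0)^{n-\beta}}\int_{\{u_i>t\}}|y|^{-\beta}\,dy,
$$
the last inequality being Proposition~\ref{prop:Hardy littlewoood modified}(i) applied to the radially decreasing weight $|y|^{-\beta}$. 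Multiplying by $g(t)$, integrating over $t\ge s_i$, and using the reduction above yields $\liminf_i F_{B_1}(v_i)\ge I_\Omega(0)^{-(n-\beta)}\lim_i F_\Omega(u_i)$, which is the inequality between the functionals.

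On the annulus $\rho_i<|z|<1$ I take $v_i$ equal to the logarithmic function with $v_i=s_i$ on $\partial B_{\rho_i}$ and $v_i=0$ on $\partial B_1$; it is $n$--harmonic there, continuous across $\partial B_{\rho_i}$, and $\int_{\{\rho_i<|z|<1\}}|\nabla v_i|^n=\omega_{n-1}s_i^n(\log(1/\rho_i))^{1-n}$. This must be bounded by $\int_{\{u_i<s_i\}}|\nabla u_i|^n$. Since $\min(u_i/s_i,1)$ is admissible for $\operatorname{cap}_n(\{u_i\ge s_i\},\Omega)$, we get $\int_{\{u_i<s_i\}}|\nabla u_i|^n\ge s_i^n\operatorname{cap}_n(\{u_i\ge s_i\},\Omega)$; on the other hand $\operatorname{cap}_n(\{G_{\Omega,0}>t\},\Omega)=t^{1-n}$ (the capacitary potential being $G_{\Omega,0}/t$, using Proposition~\ref{proposition:properties of Green's function}(a)), so by monotonicity of capacity together with the level--set asymptotics $\{G_{\Omega,0}>t\}\approx B_{I_\Omega(0)e^{-\cnn t}}$ of Proposition~\ref{proposition:properties of Green's function}(e) one obtains $\operatorname{cap}_n(\{u_i\ge s_i\},\Omega)=\omega_{n-1}(\log(I_\Omega(0)/\gamma_i))^{1-n}(1+o(1))$. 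The two sides agree up to a factor $1+o(1)$ whose sign is not controlled; this vanishing gap is absorbed by lowering the matching value from $s_i$ to $s_i(1-\theta_i)$ for a suitable $\theta_i\to0$ (i.e. inserting a short logarithmic segment between $\partial B_{\rho_i}$ and a slightly larger sphere), which reduces the bulk of the outer energy by the factor $(1-\theta_i)^n$ while leaving the super--level sets $\{v_i>t\}$, $t\ge s_i$, unchanged, hence without disturbing the $F$--estimate. After this adjustment $\|\nabla v_i\|_{L^n(B_1)}\le\|\nabla u_i\|_{L^n(\Omega)}$. Finally $\rho_i\to0$ and the outer energy tends to $0$; for fixed $x\in B_1\setminus\{0\}$ and large $i$ one has $v_i(x)=s_i\log(1/|x|)/\log(1/\rho_i)\to0$, and $\int_{B_1\setminus B_\epsilon(0)}|\nabla v_i|^n\to0$ for every $\epsilon>0$, so a concentrating subsequence of $v_i$ can only concentrate at the origin.

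I expect the delicate step to be the Dirichlet bookkeeping of the third paragraph: extracting the sharp asymptotics $\operatorname{cap}_n(B_a,\Omega)=\omega_{n-1}(\log(I_\Omega(0)/a))^{1-n}(1+o(1))$ from Proposition~\ref{proposition:properties of Green's function} (the error is genuinely two--sided, since the level sets of $G_{\Omega,0}$ are only approximately balls and $\nabla H_{\Omega,0}$ is not known to be bounded near $0$), and then organizing the matching--value correction so that one keeps $\|\nabla v_i\|_{L^n}\le\|\nabla u_i\|_{L^n}$ and simultaneously keeps the radial super--level sets large enough for the weighted rearrangement inequality. The other ingredients — the reduction via Lemma~\ref{lemma:a.e. convergence and bounded by 1}, dilation invariance of the $n$--Dirichlet integral, and Proposition~\ref{prop:Hardy littlewoood modified} — are routine. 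Note that the hypotheses $0\in\Omega$ and concentration at $0$ are used only through the choice of $I_\Omega(0)$ as the rescaling factor and through $\rho_i\to0$.
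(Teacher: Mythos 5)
Your overall strategy is the same as the paper's: rescale the Schwarz symmetrization of $u_i$ on the small set $\{u_i\ge s_i\}$ to live in a small ball inside $B_1$, glue a logarithmic ($n$-harmonic) cap on the remaining annulus, control the Dirichlet energy by a capacity comparison and the functional by the modified Hardy--Littlewood inequality, and let the rescaling produce the factor $I_\Omega(0)^{n-\beta}$. The crucial difference is in the energy bookkeeping of the third paragraph, and this is where your argument has a genuine gap.

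\smallskip
\textbf{The gap.} You fix the rescaling factor to be exactly $I_\Omega(0)$, which puts the inner piece on $B_{\rho_i}$ with $\rho_i=\gamma_i/I_\Omega(0)$, so the outer logarithmic cap has energy exactly $\omega_{n-1}s_i^n\big(\log(I_\Omega(0)/\gamma_i)\big)^{1-n}$. On the other hand, the capacity argument plus the level-set asymptotics of $G_{\Omega,0}$ only give $\int_{\{u_i<s_i\}}|\nabla u_i|^n\ge s_i^n\,\omega_{n-1}\big(\log(I_\Omega(0)/\gamma_i)\big)^{1-n}(1+o(1))$ with an $o(1)$ of uncontrolled sign (as you yourself note), because the level sets of $G_{\Omega,0}$ are only approximately balls and $\nabla H_{\Omega,0}$ is not known to be bounded. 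So $\|\nabla v_i\|_{L^n}\le\|\nabla u_i\|_{L^n}$ can fail by an $o(1)$ factor. Your proposed remedy --- inserting a short logarithmic segment and dropping to $s_i(1-\theta_i)$ before the long logarithmic cap --- cannot repair this: the resulting two-piece radial function is an admissible competitor for the $n$-capacity of the annulus $\{\rho_i<|z|<1\}$ with boundary values $s_i$ on $\partial B_{\rho_i}$ and $0$ on $\partial B_1$, and the single logarithm is the unique minimizer of that capacity; any such piecewise-logarithmic modification has \emph{at least} the energy $\omega_{n-1}s_i^n(\log(1/\rho_i))^{1-n}$, not less. The phrase ``reduces the bulk of the outer energy by the factor $(1-\theta_i)^n$'' ignores the energy of the inserted steep segment, which is what makes the total come out at least as large.

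\smallskip
\textbf{What the paper does instead.} The paper does not insist on the scale factor $I_\Omega(0)$. It first chooses $t_i\ge\lambda_i$ with $t_i-\lambda_i\to0$ such that $\{G_{\Omega,0}\ge t_i\}\subset\{u_i\ge s_i\}$, sets the inner radius to $\delta_i=e^{-\cnn t_i}$ (a bit \emph{smaller} than your $\rho_i$), and scales the inner piece by $a_i/\delta_i$ rather than by $I_\Omega(0)$. Then the outer cap energy $s_i^n t_i^{1-n}$ is matched \emph{exactly}, with no $o(1)$, by comparing $u_i/s_i$ with the true $n$-capacitary potential $G_{\Omega,0}/t_i$ of $\{G_{\Omega,0}\ge t_i\}$ in $\Omega$; one never needs the unsigned asymptotics of $\operatorname{cap}_n(\{u_i\ge s_i\},\Omega)$. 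The variable rescaling factor $a_i/\delta_i$ converges to $I_\Omega(0)$, so the constant in the functional estimate is recovered in the limit. In short: you should let the scale factor float slightly above $I_\Omega(0)$ and converge to it; fixing it at $I_\Omega(0)$ from the outset forces an outer annulus that is too thin for the energy budget that the hypothesis actually guarantees.
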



\begin{proof}  Throughout this proof $G=G_{\Omega,0}$ shall denote the $n$-Green's function of $\Omega$ with singularity at $0.$
Recall that by assumption there exists real positive numbers $\rho_i$ and $\epsilon_i$ such that for $i\to\infty$
 \begin{equation}
  \label{eq:lemma si instead of 1:eps over rho to zero}
   \rho_i\to 0\quad\text{ and }\quad \frac{\epsilon_i}{\rho_i}\to 0,
 \end{equation}
satisfying for all $i$ the following inclusion
\begin{equation}
 \label{eq:lemma si instead of 1:ui bigger si are asymptotic balls}
  B_{\rho_i-\epsilon_i}\subset\{u_i\geq s_i\}\subset B_{\rho_i+\epsilon_i}.
\end{equation}
\smallskip

\textit{Step 1.} 
Let us define $\lambda_i$, implicitly,  by the following equation: 
\begin{equation}
 \label{eq:lemma:definition of rhoi via conf. incenter. dim n}
  \rho_i=I_{\Omega}(0)e^{-\cnn \lambda_i},
\end{equation}
that is
$$
  \lambda_i=-\frac{1}{\cnn}\log\left(\frac{\rho_i}{I_{\Omega}(0)}\right).
$$
Note that $\lambda_i\to\infty$ as $i\to\infty.$ We claim that there exists $t_i\geq \lambda_i$ such that 
\begin{equation}
 \label{eq:proof:ti-lambdai goes to zero. dim n}
 \lim_{i\to\infty}(t_i-\lambda_i)=0
\end{equation}
and
\begin{equation}
 \label{eq:proof:G bigger ti in ui bigger 1. dim n}
  \{G\geq t_i\}\subset\{u_i\geq s_i\}.
\end{equation}
To show this we use Proposition \ref{proposition:properties of Green's function} (e), which states that if $t_i\geq 0$ is given such that $t_i\to\infty$, then there exists $\sigma_i\geq 0$ such that
$$
  \lim_{i\to\infty}\frac{\sigma_i}{\tau_i}=0
$$
and 
$$
  B_{\tau_i-\sigma_i}\subset\{G\geq t_i\}\subset B_{\tau_i+\sigma_i},
$$
where $\tau_i=I_{\Omega}(0)e^{-\cnn t_i}.$ In view of \eqref{eq:lemma si instead of 1:ui bigger si are asymptotic balls} it is therefore sufficient to choose $t_i$ such that
\begin{equation}
 \label{eq:proof lemma:implicit def. of ti. dim n}
  \tau_i+\sigma_i=\rho_i-\epsilon_i\,.
\end{equation}
It remains to show that with this choice \eqref{eq:proof:ti-lambdai goes to zero. dim n} is also satisfied. Using \eqref{eq:lemma:definition of rhoi via conf. incenter. dim n} and solving the previous equation for $t_i$ explicitly gives that
$$
  t_i=\lambda_i-\frac{1}{\cnn}\log\left(1-\frac{\epsilon_i+\sigma_i}{\rho_i}\right).
$$
Since we know from \eqref{eq:lemma si instead of 1:eps over rho to zero} that 
$\epsilon_i/\rho_i\to0,$ it is sufficient to show that $\sigma_i/\rho_i\to 0.$ We obtain from \eqref{eq:proof lemma:implicit def. of ti. dim n} that
$$
  \frac{\sigma_i}{\tau_i}=\frac{\sigma_i}{\rho_i-\epsilon_i-\sigma_i}
  =\frac{\sigma_i}{\rho_i\left(1-\frac{\epsilon_i}{\rho_i}-\frac{\sigma_i}{\rho_i} \right)}.
$$
Solving this equation for  $(\sigma_i/\rho_i)$ and using that $\epsilon_i/\rho_i\to 0$ and $\sigma_i/\tau_i\to 0$ shows that also $(\sigma_i/\rho_i)\to 0.$ This proves \eqref{eq:proof:ti-lambdai goes to zero. dim n}.

\smallskip

\textit{Step 2.} In this step we will show that
\begin{equation}
 \label{eq:proof lemma: step 2 grad norm of ui and ti. dim n}
  \int_{\{u_i<1\}}|\nabla u_i|^{n}\geq \frac{s_i^n}{t_i^{n-1}}.
\end{equation}
Let us denote
$$
  U=\{u_i\geq s_i\}\quad\text{ and }\quad V=\{G\geq t_i\}.
$$
From Step 1 we know that $V\subset U$ and since $u_i=0$ on $\delomega$ we also have that $\overline{U}\subset \Omega.$ Let $h_i\in W^{1,n}(\Omega\backslash V)$ be the unique solution of the problem
\begin{align*}
  \Delta_n h_i=&0\quad\text{in }\Omega\backslash V 
  \smallskip \\
  h_i=0\quad\text{on }\delomega\quad\text{and}&\quad h_i=1\quad\text{on }  \partial V.
\end{align*}
We see that this is satisfied precisely by $h_i=G/t_i$. Let us define $w_i\in W^{1,n}(\Omega\backslash V)$ by
$$
  w_i=\left\{\begin{array}{lr}
              \frac{u_i}{s_i} &\text{ in }\Omega\backslash U 
              \smallskip \\
              1 &\text{ in } \overline{U}\backslash V.
             \end{array}\right.
$$
Note that $w_i$ has the same boundary values as $h_i$ on the boundary of $\Omega\backslash V.$ Since $h_i$ is the unique minimizer of the functional 
$$
  J(h)=\int_{\Omega\backslash V}|\nabla h|^n
$$
among all functions with these fixed boundary values, we get that
$$
  \int_{\Omega\backslash V}|\nabla h_i|^n\leq \int_{\Omega\backslash V}|\nabla w_i|^n=\int_{\Omega\backslash U}|\nabla w_i|^n=
  \int_{\{u_i<s_i\}}|\nabla u_i|^n.
$$
From Proposition \ref{proposition:properties of Green's function} (a) we know that
$$
  \int_{\Omega\backslash V}|\nabla h_i|^n =\int_{\{G<t_i\}}\left|\nabla\left(\frac{G}{t_i}\right)\right|^n=\frac{1}{t_i^{n-1}}.
$$
Setting this into the previous inequality proves \eqref{eq:proof lemma: step 2 grad norm of ui and ti. dim n}.

\smallskip

\textit{Step 3.} In this step we will define $v_i\in W_{0,rad}^{1,n}(B_1).$ 
Let $\Omega^{\ast}=B_R$ be 
the symmetrized domain and $u_i^{\ast}\in W_{0,rad}^{1,n}(B_R)$ be the radially decreasing symmetric rearrangement of $u_i$. Then there exists $0<a_i<R$ such that
\begin{equation}
 \label{eq:proof lemma step 3:definition of ai. dim n}
 \{u_i^{\ast}\geq s_i\}=B_{a_i}.
\end{equation}
Moreover define $0<\delta_i<1$ by 
$$
  \delta_i=e^{-\cnn t_i}.
$$
At last we can define $v_i$ as
$$
  v_i(x)=\left\{\begin{array}{lr}
                 -\frac{s_i}{\cnn t_i}\log(|x|) &\text{ if }x\geq \delta_i
                 \smallskip \\
                 u_i^{\ast}\left(\frac{a_i}{\delta_i}x\right) & \text{ if }x \leq \delta_i\,.
                \end{array}\right.
$$
Note that $v_i$ belongs indeed to $W^{1,n}(B_1)$ since
$$
  u_i^{\ast}(a_i)=s_i=-\frac{s_i}{\cnn t_i}\log(\delta_i).
$$
\smallskip 

\textit{Step 4.} In this Step we will show that $\|\nabla v_i\|_{L^n(B_1)}\leq \|\nabla u_i\|_{L^n(\Omega)}.$ Let us denote
$$
  A_i=\int_{B_1\backslash B_{\delta_i}}|\nabla v_i|^n\quad\text{ and }
  D_i=\int_{B_{\delta_i}}|\nabla v_i|^n.
$$
A direct calculation gives that
\begin{equation}
 \label{eq:Ai estimate}
  A_i=\frac{\omega_{n-1} \,s_i^n}{\omega_{n-1}^{n/(n-1)}\,t_i^n}\int_{\delta_i}^1\frac{1}{r}dr=\frac{s_i^n}{t_i^{n-1}}.
\end{equation}
Using a change of variables and Proposition \ref{prop:Hardy littlewoood modified} (ii) gives that
$$
  D_i=\int_{B_{a_i}}|\nabla u_i^{\ast}|^n=\int_{\{u_i^{\ast}\geq s_i\}} |\nabla u_i^{\ast}|^n\leq \int_{\{u_i\geq s_i\}}|\nabla u_i|^n.
$$
Finally we get that, using \eqref{eq:proof lemma: step 2 grad norm of ui and ti. dim n}, that
$$
  \int_{B_1}|\nabla v_i|^n=D_i+A_i\leq \int_{\Omega}|\nabla u_i|^n-\int_{\{u_i<s_i\}}|\nabla u_i|^n+\frac{s_i^n}{t_i^{n-1}}\leq \int_{\Omega}|\nabla u_i|^n.
$$
\smallskip

\textit{Step 5.} In this step we show that
$$
  \lim_{i\to\infty}\frac{a_i}{\delta_i}=I_{\Omega}(0).
$$
Using the fact that $|\{u_i^{\ast}\geq s_i\}|=|\{u_i\geq s_i\}|,$
we get from \eqref{eq:proof lemma step 3:definition of ai. dim n} and the hypthesis \eqref{eq:lemma si instead of 1:ui bigger si are asymptotic balls} we get that $\rho_i-\epsilon_i\leq a_i\leq \rho_i+\epsilon_i.$
From this equation we obtain that
$$
  \frac{\rho_i}{\delta_i}\left(1-\frac{\epsilon_i}{\rho_i}\right) 
  \leq\frac{a_i}{\delta_i}\leq
  \frac{\rho_i}{\delta_i}\left(1+\frac{\epsilon_i}{\rho_i}\right).
$$
From the hypothesis \eqref{eq:lemma si instead of 1:eps over rho to zero} we know that $\epsilon_i/\rho_i\to 0.$ It is therefore sufficient to calculate the limit of $\rho_i/\delta_i$. In view the definitions of $\rho_i$,$\delta_i$ and \eqref{eq:proof:ti-lambdai goes to zero. dim n} this is indeed equal to
$$
  \lim_{i\to\infty}\frac{\rho_i}{\delta_i}=\lim_{i\to\infty}I_{\Omega}(0)e^{\cnn(t_i-\lambda_i)}=I_{\Omega}(0),
$$
which proves the statement of this step.

\smallskip

\textit{Step 6 (equality of functional limit).} Let us first show that both $u_i$ and $v_i$ converge to zero almost everywhere. For $u_i$ this holds true by hypothesis. So let $x\in B_1\backslash\{0\}$ be given and note that for all $i$ big enough
$$
  x\geq e^{-\cnn\sqrt{t_i}}\geq e^{-\cnn t_i}=\delta_i\,.
$$
Therefore we obtain from the definition of $v_i$ that
$$
  v_i(x)\leq-\frac{s_i}{\cnn t_i}\log\left(e^{-\cnn\sqrt{t_i}}\right)= \frac{s_i}{\sqrt{t_i}}\to 0,
$$
which shows the claim also for $v_i$. In view of lemma \ref{lemma:a.e. convergence and bounded by 1} it is therefore sufficient to show that 
\begin{equation}
 \label{eq:proof lemma:functional equality on ui bigger 1. dim n}
  \lim_{i\to\infty}\int_{\{u_i\geq s_i\}}\frac{e^{\alpha u_i^{n/(n-1)}}-1}{|x|^\beta}
  =I_{\Omega}^{n-\beta}(0)
  \liminf_{i\to\infty}\int_{\{v_i\geq s_i\}}\frac{e^{\alpha v_i^{n/(n-1)}}-1}{|x|^\beta}.
\end{equation}
From Proposition \ref{prop:Hardy littlewoood modified} (i) and the properties of symmetrization we get that for every $i$
$$
  \int_{\{u_i\geq s_i\}}\frac{e^{\alpha u_i^{n/(n-1)}}-1}{|x|^\beta}\leq
  \int_{\{u_i^{\ast}\geq s_i\}}\frac{e^{\alpha (u_i^{\ast})^{n/(n-1)}}-1}{(|x|^\beta)^{\ast}}
  =
  \int_{B_{a_i}}\frac{e^{\alpha (u_i^{\ast})^{n/(n-1)}}-1}{(|x|^\beta)^{\ast}}.
$$
Note that if $\beta=0$ then the inequality can actally be replaced by an equality (see Kesavan, page 14, equation (1.3.2)). For $i$ big enough $B_{a_i}\subset \Omega,$ and then $(|x|^{\beta})^{\ast}=|x|^{\beta}$ for all $x\in B_{a_i}.$ Making the substitution $x=(a_i/\delta_i)\,y$ gives
$$
  \int_{\{u_i\geq s_i\}}\frac{e^{\alpha u_i^{n/(n-1)}}-1}{|x|^\beta}\leq
  \left(\frac{a_i}{\delta_i}\right)^{n-\beta}\int_{B_{\delta_i}} \frac{e^{\alpha v_i^{n/(n-1)}}-1}{|y|^\beta}
  =\left(\frac{a_i}{\delta_i}\right)^{n-\beta}\int_{\{v_i\geq s_i\}} \frac{e^{\alpha v_i^{n/(n-1)}}-1}{|x|^\beta}.
$$
From Step 5 we therefore get that
$$
  \lim_{i\to\infty}\int_{\{u_i\geq s_i\}}\frac{e^{\alpha u_i^{n/(n-1)}}-1}{|x|^\beta}\leq
  I_{\Omega}(0)^{n-\beta}\liminf_{i\to\infty}\int_{\{v_i\geq s_i\}} \frac{e^{\alpha v_i^{n/(n-1)}}-1}{|x|^\beta},
$$ 
which proves \eqref{eq:proof lemma:functional equality on ui bigger 1. dim n}.
\smallskip

\textit{Step 7.} The last statement of the lemma ($v_i$ can concentrate only at $0$) follows from \eqref{eq:Ai estimate}, because $\lim_{i\to\infty}A_i=0$ (using that $t_i\to\infty).$ Thus $v_i$ cannot concentrate at any other point than $0.$
\end{proof}


\smallskip

We are now able to prove the main proposition of this section.
\smallskip

\begin{proof}[Proof (Proposition \ref{proposition:main domain to ball}).]
The proof is exactly the same as in the $2$-dimensional case, see details in \cite{Csato Roy Calc var Pde}: One modifies succesively the given sequence $u_i$ using Lemma \ref{lemma:main properties step 1-3 in Flucher} and  Lemma \ref{lemma:the one we sent to Struwe} and construct finally a sequence $\{v_i\}\subset W_{0,rad}^{1,n}(B_1)\cap\mathcal{B}_1(B_1),$ using Lemma \ref{main lemma si instead of 1. domain to ball},  such that
$$
  F^{\delta}_{\Omega}(0)\leq\lim_{i\to\infty}F_{\Omega}(u_i)\leq I_{\Omega}^{n-\beta}(0)\liminf_{i\to\infty}F_{B_1}(v_i).
$$
The sequence $\{v_i\}$ has to concentrate at $0,$ because it goes to zero almost everywhere. (use last statment of Lemma \ref{main lemma si instead of 1. domain to ball}, Theorem \ref{theorem:concentration alternative for singular moser trudinger} and Remark \ref{remark:F delta zero is not zero}).
\end{proof}

\smallskip

\section{Proof of the Main Result}\label{section:proof main theorem}

We now prove Theorem \ref{theorem:intro:Extremal for Singular Moser-Trudinger}. We assume that $0\in\Omega,$ the other cases are proven exactly as in the two dimensional setting. For the case $0\in \partial\Omega$ use \cite{Wang Wei} Proposition 2.4 (3), which implies that $I_{\Omega_m}(0)$ tends to zero if $\{\Omega_m\}$ is a sequence of sets whose boundary approaches the origin.
\smallskip

\begin{proof}
Let $\{u_i\}_{i\in\mathbb{N}}\subset \mathcal{B}_1(\Omega)$ be a maximizing sequence for $F_{\Omega}$. Then by Theorem \ref{theorem:concentration alternative for singular moser trudinger}, for some subsequence, either 
$$
  \lim_{i\to\infty}F_{\Omega}(u_i)=F_{\Omega}(u),
$$
or $\{u_i\}$ concentrates at some point $x\in\Omegabar.$ In the first case $u$ is an extremal function and we are done. It remains to exclude concentration. Assume, by contradiction, we have concentration at $x.$ By Proposition \ref{proposition:if u_i concentrates somewhere else than zero} we must have $x=0$ (because we have assumed $\{u_i\}$ is a maximizing sequence and $F_{\Omega}^{\sup}\neq 0$). Thus we get
\begin{equation}
 \label{eq:1}
  F_{\Omega}^{\sup}=\lim_{i\to\infty}F_{\Omega}(u_i)=F_{\Omega}^{\delta}(0).
\end{equation}
By Theorem \ref{theorem:concentration formula by domain to ball} it holds
\begin{equation}
 \label{eq:2}
  F_{\Omega}^{\delta}(0)=I_{\Omega}(0)^{n-\beta}F_{B_1}^{\delta}(0),
\end{equation}
and by Theorem \ref{theorem:supremum of FOmega on Ball}
\begin{equation}
  \label{eq:3}
   F_{B_1}^{\delta}(0)<F_{B_1}^{\sup}.
\end{equation}
Finally Theorem \ref{theorem:ball to general domain:sup inequality} states
\begin{equation}
  \label{eq:4}
   F_{B_1}^{\sup}I_{\Omega}(0)^{n-\beta}\leq F_{\Omega}^{\sup}.
\end{equation}
Combining \eqref{eq:1}-\eqref{eq:4} gives the contradiction $F_{\Omega}^{\sup}<F_{\Omega}^{\sup},$ and therefore concentration cannot occur.
\end{proof}

\section{Appendix: $n$-harmonic functions}
\label{section:n harmonic functions}

We summarize the results on $n$-harmonic functions that we have used. We only state them under the more restrictive hypothesis that are sufficient for our construction of the $n$-harmonic transplantation and we refer to references for more general versions.

\begin{definition}
Let $\Omega\subset\re^n$ be an open set and let $f\in W^{1,n}(\Omega).$ we say that $f$ is $n$-harmonic, that is 
$$
  \Delta_n f=0\quad\text{ in }\Omega,
$$
if 
$$
  \intomega|\nabla f|^{n-2}\langle \nabla f,\nabla\varphi\rangle =0\quad \text{ for all }\varphi\in C_c^{\infty}(\Omega).
$$
\end{definition}

The existence of $W^{1,n}$ solutions to the $n$-Laplace equation follows easily by direct methods of the calculus of variations. The difficult part of the next theorem is the $C^{1,\alpha}$ regularity. For the interior regularity see \cite{diBenedetto}, \cite{Tolksdorf1} or \cite{Evans C1 alpha reg}. Lieberman \cite{Lieberman} proves regularity up to the boundary under the assumption that $f$ is bounded. In this paper we only need the regularity result for very special boundary values (constants on different parts of the boundary). In this case the proof is simple, so we have included a proof of the boundedness, see Proposition \ref{proposition:hypothesis Liebermann satisfied}. For a more general version, not assuming the boundedness of $f,$ we have not found a satisfactory reference.

\begin{theorem}[Existence and Regularity]
\label{P2, Existence and regularity} There exists $0<\alpha<1,$ depending only on $n,$ with the following property.
Let $\Omega\subset\re^n$ be a bounded open smooth set, $g\in C^{1,\alpha}(\delomega)$ such that $\|g\|_{C^{1,\alpha}(\delomega)}\leq t_0$ . Then there exists a unique $f\in W^{1,n}(\Omega)$ such that 
\begin{align*}
 \Delta_n f=&0\quad\text{ in }\Omega 
 \\
 f=&g\quad\text{ on }\delomega.
\end{align*}
The solution $f$ satisfies
\begin{equation}
 \label{eq:theorem:minimum of n gradient}
  \intomega|\nabla f|^n\leq \intomega |\nabla\varphi|^n\quad\forall \varphi\in W^{1,n}(\Omega)\text{ with }\varphi=g\quad\text{ on }\delomega.
\end{equation}
Moreover if $\|f\|_{L^{\infty}}(\Omega)\leq M_0$ then $f\in C^{1,\alpha}(\Omegabar)$ and there exists a constant $C$ depending on $n,\Omega,M_0$ and $t_0$ such that 
$$
  \|f\|_{C^{1,\alpha}(\Omegabar)}\leq C(n,\Omega,M_0,t_0)
$$
\end{theorem}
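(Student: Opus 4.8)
The plan is to establish existence and uniqueness by the direct method of the calculus of variations, to identify the minimiser with the $n$-harmonic function via its Euler--Lagrange equation, and then to invoke the interior and boundary regularity theory for the $n$-Laplacian for the $C^{1,\alpha}$ conclusions.

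\textbf{Existence and uniqueness.} Since $\delomega$ is a compact smooth hypersurface and $g\in C^{1,\alpha}(\delomega)$, I would first extend $g$ to $\tilde g\in C^{1,\alpha}(\Omegabar)\subset W^{1,n}(\Omega)$ with $\|\tilde g\|_{W^{1,n}(\Omega)}\le C(\Omega)\,t_0$. Then minimise $I(\varphi)=\intomega|\nabla\varphi|^n$ over the affine class $\mathcal A=\{\varphi\in W^{1,n}(\Omega):\varphi-\tilde g\in W_0^{1,n}(\Omega)\}$, which is nonempty. By the Poincar\'e inequality a bound on $I(\varphi)$ bounds $\|\varphi\|_{W^{1,n}(\Omega)}$, so $I$ is coercive on $\mathcal A$; and since $\xi\mapsto|\xi|^n$ is convex, $I$ is sequentially weakly lower semicontinuous on $W^{1,n}(\Omega)$. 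Hence a minimiser $f\in\mathcal A$ exists. Because $\xi\mapsto|\xi|^n$ is in fact strictly convex for $n>1$ (combine the strict triangle inequality with the strict convexity of $t\mapsto t^n$ on $[0,\infty)$), any two minimisers have the same gradient almost everywhere, hence differ by a constant, which vanishes since both have boundary trace $g$; this gives uniqueness. Differentiating $t\mapsto I(f+t\varphi)$ at $t=0$ for $\varphi\in C_c^{\infty}(\Omega)$ yields $\intomega|\nabla f|^{n-2}\langle\nabla f,\nabla\varphi\rangle=0$, i.e. $\Delta_n f=0$. Conversely, convexity shows that any $n$-harmonic function with boundary trace $g$ minimises $I$ over $\mathcal A$, so the $n$-harmonic solution is the unique minimiser and \eqref{eq:theorem:minimum of n gradient} holds.

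\textbf{Regularity.} I would fix $\alpha=\alpha(n)\in(0,1)$ to be the H\"older exponent provided by the interior $C^{1,\alpha}$ regularity theory for $n$-harmonic functions (\cite{diBenedetto}, \cite{Tolksdorf1}, \cite{Evans C1 alpha reg}), shrinking it if necessary so that it also serves in the boundary estimate; it still depends only on $n$. Under the standing hypotheses — $\Omega$ bounded and smooth, $g\in C^{1,\alpha}(\delomega)$ with $\|g\|_{C^{1,\alpha}(\delomega)}\le t_0$, and the a priori bound $\|f\|_{L^{\infty}(\Omega)}\le M_0$ — the boundary regularity result of Lieberman \cite{Lieberman} applies and upgrades the interior estimate to $f\in C^{1,\alpha}(\Omegabar)$ together with the quantitative bound $\|f\|_{C^{1,\alpha}(\Omegabar)}\le C(n,\Omega,M_0,t_0)$.

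\textbf{Main obstacle.} No genuinely new difficulty arises: the existence and uniqueness are entirely routine, and the delicate points are confined to the regularity part, namely ensuring that a single exponent $\alpha$ depending only on $n$ works for both the interior estimate and Lieberman's boundary estimate, and tracking the dependence of the constant on $(n,\Omega,M_0,t_0)$. This is precisely why the $L^{\infty}$ bound is imposed as a hypothesis rather than derived here: Lieberman's theorem requires boundedness of the solution, and for arbitrary $C^{1,\alpha}$ boundary data we rely on the cited literature for the global $C^{1,\alpha}$ estimate rather than reproving it; the boundedness needed for the special boundary data actually used in our construction is established separately in Proposition \ref{proposition:hypothesis Liebermann satisfied}.
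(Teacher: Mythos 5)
Your proposal is correct and follows essentially the same route the paper indicates: the paper itself supplies no proof of this theorem, only a brief remark that existence follows from the direct method of the calculus of variations and that the $C^{1,\alpha}$ regularity is drawn from \cite{diBenedetto}, \cite{Tolksdorf1}, \cite{Evans C1 alpha reg} (interior) and \cite{Lieberman} (boundary, under a boundedness assumption). Your write-up fills in exactly this sketch — extension of the boundary data, coercivity via Poincar\'e, weak lower semicontinuity and strict convexity of $\xi\mapsto|\xi|^n$ for uniqueness, the Euler--Lagrange identification with the $n$-harmonic function, and the citation to Lieberman for boundary regularity under the $L^\infty$ bound — and correctly observes that this is why the boundedness is an explicit hypothesis, to be verified separately (Proposition \ref{proposition:hypothesis Liebermann satisfied}) for the special boundary data actually used.
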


A reference for the next three results is for instance \cite{Lindqvist} Theorems 2.15, 2.20 and Corollary 2.21.

\begin{theorem}[Comparison Principle]
\label{P3, Comparison Principle}
Let $\Omega\subset\re^n$ be a bounded open smooth set. Suppose $f,g\in W^{1,n}(\Omega)$ and $\Delta_n f=\Delta_n g=0$ in $\Omega.$ If
$$
  f\leq g\quad\text{ on }\delomega,
$$
then 
$$
  f\leq g\quad\text{ in }\Omega.
$$
\end{theorem}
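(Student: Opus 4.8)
The plan is to run the standard monotonicity argument for the $n$-Laplacian. First I would test the two weak equations against the truncation $\varphi=(f-g)^+:=\max\{f-g,0\}$. To see that this is legitimate, note that since $\Omega$ is smooth and bounded, $f$ and $g$ have traces on $\delomega$ and the hypothesis $f\leq g$ on $\delomega$ is to be read in this trace sense; hence $(f-g)^+$ has vanishing trace and, being nonnegative, belongs to $W^{1,n}_0(\Omega)$, with $\nabla\varphi=\chi_{\{f>g\}}\nabla(f-g)$ almost everywhere. The defining identity of $n$-harmonicity is stated only for test functions in $C_c^\infty(\Omega)$, but since $|\nabla f|^{n-2}\nabla f\in L^{n/(n-1)}(\Omega;\rn)$ and likewise for $g$, it extends by density to every test function in $W^{1,n}_0(\Omega)$, in particular to $\varphi$. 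Subtracting the identity for $g$ from the one for $f$, applied to $\varphi$, and using that $\nabla\varphi$ is supported in $\{f>g\}$, yields
$$
  \int_{\{f>g\}}\big(|\nabla f|^{n-2}\nabla f-|\nabla g|^{n-2}\nabla g\big)\cdot\big(\nabla f-\nabla g\big)=0.
$$

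Then I would invoke the elementary monotonicity inequality for the vector field $\xi\mapsto|\xi|^{n-2}\xi$: for all $\xi,\eta\in\rn$ and $n\geq 2$,
$$
  \big(|\xi|^{n-2}\xi-|\eta|^{n-2}\eta\big)\cdot(\xi-\eta)\geq c_n\,|\xi-\eta|^{n}\geq 0,
$$
with equality precisely when $\xi=\eta$; for $n=2$ this is just $|\xi-\eta|^2$, and for $n>2$ it is a classical convexity estimate. Consequently the integrand in the previous display is nonnegative with zero integral, so $\nabla f=\nabla g$ almost everywhere on $\{f>g\}$, that is $\nabla\varphi=0$ almost everywhere in $\Omega$. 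Hence $\varphi$ is constant on each connected component of $\Omega$, and since $\varphi\in W^{1,n}_0(\Omega)$ and $\Omega$ is bounded this constant must be $0$. Therefore $f\leq g$ almost everywhere in $\Omega$, and passing to the continuous (indeed $C^{1,\alpha}_{loc}$, by interior regularity) representatives gives $f\leq g$ everywhere in $\Omega$.

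I do not expect a genuine obstacle in this argument, which is entirely classical (cf. Lindqvist \cite{Lindqvist}); the only two points that need a word of care are the density argument enlarging the class of admissible test functions to $W^{1,n}_0(\Omega)$ and the monotonicity inequality for $|\xi|^{n-2}\xi$. The one mildly nonstandard feature when $n>2$ is that the lower bound in that inequality is of order $|\xi-\eta|^n$ rather than quadratic, but since only its nonnegativity and its vanishing exactly at $\xi=\eta$ are used, this is harmless.
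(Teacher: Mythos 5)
Your argument is correct and is essentially the standard proof of the comparison principle for the $n$-Laplacian (testing with $(f-g)^+$ and invoking monotonicity of $\xi\mapsto|\xi|^{n-2}\xi$); the paper itself supplies no proof but cites Lindqvist's notes, which use this same argument. The two small points you flag—the density extension to test functions in $W^{1,n}_0(\Omega)$ and the vector inequality for $p=n\ge 2$—are indeed the only ones requiring care and are handled correctly.
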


\begin{theorem}[Harnack Inequality]
\label{P4, Harnack inequality}
Let $V$ be a compact set and $U$ open with $V\subset\subset U.$ Then there is a constant $C=C(V)$ such that 
$$
  \sup_V u \leq C \inf_V u
$$
for all $u\in W^{1,n}(U)$ such that $\Delta_nu=0$ in $U$ and $u\geq 0$ in $U.$ 
\end{theorem}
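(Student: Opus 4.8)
The plan is to obtain the Harnack inequality on a general compact set $V\subset\subset U$ in two stages: first prove a local Harnack inequality on concentric balls, with a constant depending only on $n$, and then globalise it by a standard Harnack-chain argument. This is the classical route for quasilinear equations in divergence form, and for the $n$-Laplacian a complete treatment is in \cite{Lindqvist} (Theorems 2.15, 2.20 and Corollary 2.21); what follows is a sketch of that argument, which in the paper we simply invoke.

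\emph{Local Harnack inequality.} First I would show that there is $C_0=C_0(n)$ such that, whenever $u\ge 0$ is $n$-harmonic on a ball $B_{4\rho}(y)\subset U$, one has $\sup_{B_\rho(y)}u\le C_0\inf_{B_\rho(y)}u$. Replacing $u$ by $u+\varepsilon$ and letting $\varepsilon\downarrow 0$ afterwards, one may assume $\inf u>0$. Inserting $\varphi=\eta^{n}u^{q}$, with $\eta$ a cut-off and $q\in\re\setminus\{0\}$, into the weak formulation $\int|\nabla u|^{n-2}\langle\nabla u,\nabla\varphi\rangle=0$ produces Caccioppoli-type reverse inequalities for powers of $u$; Moser iteration on positive exponents then yields a bound $\sup_{B_\rho}u\le C\bigl(|B_{2\rho}|^{-1}\int_{B_{2\rho}}u^{p_0}\bigr)^{1/p_0}$, and on negative exponents the companion bound $\bigl(|B_{2\rho}|^{-1}\int_{B_{2\rho}}u^{-p_0}\bigr)^{-1/p_0}\le C\inf_{B_\rho}u$, valid for any small fixed $p_0>0$. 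The two halves are matched by the logarithmic choice $\varphi=\eta^{n}u^{1-n}$, which shows $\log u\in\operatorname{BMO}(B_{2\rho})$ with seminorm controlled by $n$ alone; the John--Nirenberg inequality then makes the averages of $u^{p_0}$ and of $u^{-p_0}$ comparable for $p_0$ small enough, and chaining the three estimates gives the local statement.

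\emph{Globalisation.} Given $V\subset\subset U$, put $d=\operatorname{dist}(V,\partial U)>0$ and $\rho=d/8$. Enlarging $V$ to a connected compact set $V'$ with $V\subset V'\subset\subset U$ -- possible since in every application $U$ is connected -- cover $V'$ by finitely many balls $B_\rho(y_1),\dots,B_\rho(y_N)$ with centres in $V'$, arranged so that any two of them are joined by a chain in which consecutive members overlap; $N$ depends only on $V'$, hence on $V$ and $U$. Applying the local inequality in each $B_{4\rho}(y_j)\subset U$ and propagating comparability across the overlaps gives $\sup_V u\le\sup_{V'}u\le C_0^{\,N}\inf_{V'}u\le C_0^{\,N}\inf_V u$, which is the claim with $C(V)=C_0^{\,N}$.

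The genuine difficulty lies entirely in the first stage: for $n>2$ the operator is degenerate (its coefficient $|\nabla u|^{n-2}$ vanishes where $\nabla u=0$), so one cannot borrow the linear Harnack proof and must run the full De Giorgi--Nash--Moser machinery, the delicate point being the logarithmic estimate feeding John--Nirenberg. Since this is classical, we simply invoke it from the references cited above.
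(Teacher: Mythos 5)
Your proposal matches the paper's approach exactly: the paper does not prove the Harnack inequality but simply cites \cite{Lindqvist} (Theorems 2.15, 2.20 and Corollary 2.21), and you correctly identify this and conclude by invoking the same reference. Your supplementary sketch of the Moser iteration, John--Nirenberg matching of positive and negative power averages, and the Harnack-chain globalisation is an accurate outline of the cited proof, including the correct scale-invariant local form and the observation that $V$ must lie in a single connected component of $U$ for the chaining to work.
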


\begin{theorem}[Strong maximum principle]
\label{P1, Strong maximum principle}
Let $\Omega$ be a bounded open set with smooth boundary $\delomega$ and $u\in  W^{1,n}(\Omega).$ Suppose $\Delta_n u=0$ in $\Omega,$ $u$ is not constant and $u\geq 0.$ Then the following holds
$$ 
  \inf_{\delomega}f<f(x)<\sup_{\delomega} f\quad\text{ for all }x\in \Omega.
$$
\end{theorem}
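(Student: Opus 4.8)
The strategy is the classical one: prove the weak maximum principle first, then promote it to the strong form by an open--closed argument built on the Harnack inequality. Since the conclusion can only hold on a connected set, I would begin by reducing to the case that $\Omega$ is connected, applying the statement componentwise (this is harmless for all uses of the theorem in Sections~\ref{section:ball to domain} and~\ref{section:domain to ball}, where the relevant open sets are connected). Write $M=\sup_{\delomega}f$ and $m=\inf_{\delomega}f$. The constants $M$ and $m$ are $n$-harmonic and satisfy $m\le f\le M$ on $\delomega$ in the trace sense, so the comparison principle (Theorem~\ref{P3, Comparison Principle}) gives $m\le f\le M$ throughout $\Omega$. (Alternatively, testing $\Delta_n f=0$ with $(f-M)^+\in W^{1,n}_0(\Omega)$ yields $\int_{\{f>M\}}|\nabla f|^n=0$, hence $(f-M)^+$ is constant and therefore zero, and similarly for the lower bound.)

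Next I would observe that $v:=M-f$ is again $n$-harmonic: indeed $|\nabla(M-f)|^{n-2}\nabla(M-f)=-|\nabla f|^{n-2}\nabla f$, so $\Delta_n v=-\Delta_n f=0$, and likewise $\Delta_n(f-m)=0$. By the interior regularity theory for the $n$-Laplacian cited before Theorem~\ref{P2, Existence and regularity}, both $v\ge 0$ and $f-m\ge 0$ are continuous in $\Omega$. Suppose, for contradiction, that $f(x_0)=M$ for some $x_0\in\Omega$, i.e.\ $v(x_0)=0$, and consider $Z=\{x\in\Omega:\ v(x)=0\}$, which is nonempty and relatively closed in $\Omega$ by continuity. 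To see that $Z$ is open, fix $x_1\in Z$ and choose $\rho>0$ with $\overline{B_{2\rho}(x_1)}\subset\Omega$; applying the Harnack inequality (Theorem~\ref{P4, Harnack inequality}) to the nonnegative $n$-harmonic function $v$ with $V=\overline{B_{\rho}(x_1)}$ and $U=B_{2\rho}(x_1)$ gives $\sup_{B_{\rho}(x_1)}v\le C\inf_{B_{\rho}(x_1)}v\le C\,v(x_1)=0$, so $v\equiv 0$ on $B_{\rho}(x_1)$ and $B_{\rho}(x_1)\subset Z$. Since $\Omega$ is connected, $Z=\Omega$, i.e.\ $f\equiv M$, contradicting the hypothesis that $f$ is not constant. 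Hence $f(x)<M=\sup_{\delomega}f$ for all $x\in\Omega$, and running the same argument with $f-m$ in place of $v$ gives $f(x)>m=\inf_{\delomega}f$ for all $x\in\Omega$.

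The argument uses almost nothing beyond results already available: the comparison and Harnack statements are Theorems~\ref{P3, Comparison Principle} and~\ref{P4, Harnack inequality}, and the one genuinely non-elementary input is the local continuity of $n$-harmonic $W^{1,n}$ functions, which is part of the interior $C^{1,\alpha}$ regularity cited in the paragraph preceding Theorem~\ref{P2, Existence and regularity}. The only point that needs a word of caution is the connectedness reduction at the very start, since the strong maximum principle is false on disconnected domains; beyond that the proof is the standard open--closed dichotomy.
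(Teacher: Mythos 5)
The paper does not prove this theorem. It is stated as a quoted result with a pointer to Lindqvist (Theorems 2.15, 2.20 and Corollary 2.21) and no argument is supplied in the text. Your proof is correct and is essentially the same argument one finds in Lindqvist: establish the weak maximum principle (via the comparison principle, or the direct test-function argument you give in parentheses), then propagate an interior extremum using the Harnack inequality in an open--closed argument on the set where the extremum is attained. A few remarks on the statement versus your proof. First, you are right that the theorem as printed has a typo (hypotheses written for $u$, conclusion written for $f$); your reading with a single function is the intended one. Second, you correctly identify that some connectivity hypothesis is implicitly required: on a disconnected open set one can take $f$ constant on each component with different values, and the strict inequalities fail, so reducing to a connected component is not just cosmetic. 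Third, the hypothesis $u\ge 0$ in the statement is never used in your argument and is in fact unnecessary for this two-sided conclusion, because the Harnack inequality is applied to the functions $M-f$ and $f-m$, which are nonnegative by the weak maximum principle rather than by any sign assumption on $f$ itself; your proof therefore actually establishes a slightly cleaner version of the theorem than the one stated. As a derivation from Theorems~\ref{P3, Comparison Principle} and~\ref{P4, Harnack inequality} together with interior continuity of $n$-harmonic functions, the argument is complete.
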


The next theorem follows from \cite{Tolksdorf1} Theorem 1.

\begin{theorem}
\label{P5, Uniform Regularity}
Let $V$ be a compact set $V\subset\subset U,$ where is $U$ open. Then there exists $\alpha\in (0,1)$ and a  constant $C>0$ depending on $V,U,n,$ and a variable $g_0$ such that 
$$
  \|u\|_{C^{1,\alpha}(V)}\leq C\left(V,U,n,g_0\right)
$$
for all $u\in W^{1,n}(U)$ with $\Delta_nu=0$ in $U$ and $=\|u\|_{L^{\infty}(U)}\leq g_0$.
\end{theorem}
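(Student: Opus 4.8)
The plan is to deduce the stated global-on-$V$ bound from the local interior $C^{1,\alpha}$ estimate of Tolksdorf \cite{Tolksdorf1} Theorem 1 by a routine covering argument. First I would note that $\Delta_n u = 0$ is exactly the equation $\operatorname{div} A(\nabla u) = 0$ with $A(\xi) = |\xi|^{n-2}\xi$ and zero right-hand side, which fits the structure hypotheses of \cite{Tolksdorf1}, the ``variable'' on which the constants there depend being precisely an a priori $L^\infty$ bound. Thus, for any ball with $B_{2r}(z) \subset U$, Tolksdorf's theorem yields an exponent $\alpha = \alpha(n) \in (0,1)$ and a constant $C_1 = C_1(n,r,g_0)$ such that every $n$-harmonic $u$ on $B_{2r}(z)$ with $\|u\|_{L^\infty(B_{2r}(z))} \le g_0$ satisfies $\|u\|_{C^{1,\alpha}(\overline{B_r(z)})} \le C_1$. (If one prefers, one may split this into the classical interior Lipschitz estimate $\|\nabla u\|_{L^\infty(B_r(z))} \le C(n) r^{-1} g_0$ followed by a gradient-Hölder estimate; Tolksdorf packages both at once.)

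Next I would set $d = \operatorname{dist}(V, \partial U)$, which is positive and finite since $V \subset\subset U$, fix $r = d/3$, and use compactness of $V$ to choose finitely many points $x_1, \dots, x_N \in V$ --- with $N$ controlled by $V, U, n$ --- so that $V \subset \bigcup_{j=1}^N B_r(x_j)$; then $\overline{B_{2r}(x_j)} \subset U$ for every $j$. Applying the previous paragraph on each $B_{2r}(x_j)$ gives $\|u\|_{C^{1,\alpha}(\overline{B_r(x_j)})} \le C_1(n,d,g_0)$ for all $j$, which already controls $\|u\|_{L^\infty(V)}$ and $\|\nabla u\|_{L^\infty(V)}$. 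To patch the gradient Hölder seminorm over all of $V$, I would take a Lebesgue number $\delta_0 = \delta_0(V,U,n) > 0$ of the cover $\{B_r(x_j)\}$: for $x, y \in V$ with $|x - y| < \delta_0$ the points lie in a common ball $B_r(x_j)$, so $|\nabla u(x) - \nabla u(y)| \le C_1 |x-y|^\alpha$, while for $|x-y| \ge \delta_0$ one has $|\nabla u(x) - \nabla u(y)| \le 2 \|\nabla u\|_{L^\infty(V)} \le 2 C_1 \delta_0^{-\alpha} |x-y|^\alpha$. Combining the two cases gives $[\nabla u]_{C^{0,\alpha}(V)} \le C(V,U,n,g_0)$ and hence $\|u\|_{C^{1,\alpha}(V)} \le C(V,U,n,g_0)$, as desired.

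The main obstacle is entirely contained in the single-ball estimate: the fact that a bounded $n$-harmonic function has a Hölder continuous gradient, with quantitative control in terms of $\|u\|_{L^\infty}$ and the geometry, is the deep part of the theory of degenerate quasilinear equations, and I would simply cite \cite{Tolksdorf1} for it rather than reprove it. Consequently the only thing that genuinely needs checking is that the $n$-Laplacian satisfies the structure assumptions of \cite{Tolksdorf1} and that the dependence of $\alpha$ and of the constant is the one produced there; once that is granted, the covering and patching of the second paragraph are standard and present no difficulty.
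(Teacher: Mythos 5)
Your proposal is correct and coincides with the paper's approach: the paper simply states that this theorem ``follows from \cite{Tolksdorf1} Theorem 1,'' which is exactly the local interior $C^{1,\alpha}$ estimate you invoke. Your covering-and-patching argument merely spells out the routine passage from Tolksdorf's ball estimate to the uniform bound on the compact set $V$, which the paper leaves implicit.
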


The next theorem is contained in \cite{Serrin 1} Theorem 10 and \cite{Serrin 2} Theorem 3, combined with Theorem  \ref{P2, Existence and regularity}. This is the generalization of Bocher's theorem (see \cite{Axler-Bourdon-Ramey} Theorem 3.9 page 50) to $n$-harmonic functions.

\begin{theorem}
 Let $\Omega\subset\re^n$ be a bounded open smooth set with $0\in \Omega$, $u\in W^{1,n}_{loc}(\Omega\backslash \{0\})$ with the properties
$$
  \Delta_nu=0\quad \text{ in }\Omega\backslash \{0\}.
$$
Then 
\smallskip

(a) \emph{either} $u\in L^{\infty}(\Omega),$ in which case $u$ is continuous in $\Omega.$ Moreove $u\in W^{1,n}(\Omega)$ and it solves the equation
$$
  \Delta_nu=0\quad\text{ in }\Omega,
$$
and hence $u\in C^{1,\alpha}_{loc}(\Omega)$ for some $0<\alpha<1.$
\smallskip

(b)
\emph{or} there exists a constant $k\in \re$ such that 
$$
  \intomega \langle |\nabla u|^{n-2}\nabla u,\nabla \varphi\rangle=k\varphi(0)\quad\text{ for all }\varphi\in C_c^1(\Omega).
$$
\end{theorem}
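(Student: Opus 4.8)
The statement is local near the singularity, so the plan is to fix $r>0$ with $\overline{B_r}\subset\Omega$ and analyse $u$ on the punctured ball $B_r\setminus\{0\}$. Away from $0$ the interior regularity theory for the $n$-Laplacian (the references quoted after Theorem \ref{P2, Existence and regularity}, or Theorem \ref{P2, Existence and regularity} itself applied on annuli) already gives $u\in C^{1,\alpha}_{\mathrm{loc}}(\Omega\setminus\{0\})$, so everything reduces to the behaviour as $x\to 0$. The dichotomy (a)/(b) is exactly Serrin's classification of isolated singularities of quasilinear equations: by \cite{Serrin 1} Theorem 10 and \cite{Serrin 2} Theorem 3, either $u$ stays bounded in a neighbourhood of $0$, or there is a nonzero constant $\mu$ such that
\[
  \lim_{x\to 0}\frac{u(x)}{E(x)}=\mu,\qquad |\nabla u(x)|=|\mu|\,|\nabla E(x)|\,(1+o(1))\ \text{ as }x\to 0,
\]
where $E(x)=-\tfrac{1}{\cnn}\log|x|$ is the fundamental solution of $\Delta_n$. (In all our applications $u\ge 0$, so there is no issue about the sign of the singularity.)

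If $u$ is bounded near $0$ (alternative (a)), the plan is to exploit that $\{0\}$ has zero $n$-capacity in $\re^n$. First I would run a Caccioppoli-type argument with cut-offs $\eta_\varepsilon\in C_c^\infty(B_r)$, $0\le\eta_\varepsilon\le 1$, $\eta_\varepsilon\equiv 1$ on $B_r\setminus B_{\sqrt\varepsilon}$, $\eta_\varepsilon\equiv 0$ on $B_\varepsilon$, $\|\nabla\eta_\varepsilon\|_{L^n}\to 0$, testing the equation on $B_r\setminus\{0\}$ with $\eta_\varepsilon^{\,n}u$; boundedness of $u$ then yields $\int_{B_{r/2}}|\nabla u|^n<\infty$, hence $u\in W^{1,n}(B_{r/2})$. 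Next, for $\varphi\in C_c^\infty(B_{r/2})$, inserting $\eta_\varepsilon\varphi$ into the weak equation and letting $\varepsilon\to 0$ kills the $\nabla\eta_\varepsilon$ term by H\"older and $\|\nabla\eta_\varepsilon\|_{L^n}\to0$, so $u$ is $n$-harmonic across $0$, hence in $\Omega$; finally Theorem \ref{P2, Existence and regularity} (together with the cited interior results \cite{diBenedetto}, \cite{Tolksdorf1}, \cite{Evans C1 alpha reg}) promotes this to $u\in C^{1,\alpha}_{\mathrm{loc}}(\Omega)$, which is alternative (a).

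If $u$ is unbounded (alternative (b)), the plan is to extract the Dirac mass from the asymptotics. For $\varphi\in C_c^1(\Omega)$ and small $\varepsilon>0$ I would apply the divergence theorem to the vector field $\varphi\,|\nabla u|^{n-2}\nabla u$ on $\Omega\setminus\overline{B_\varepsilon}$ (legitimate since $u\in C^{1,\alpha}$ and $\Delta_n u=0$ there), obtaining
\[
  \int_{\Omega\setminus\overline{B_\varepsilon}}|\nabla u|^{n-2}\langle\nabla u,\nabla\varphi\rangle\,dx
  = -\int_{\partial B_\varepsilon}|\nabla u|^{n-2}\Big\langle\nabla u,\tfrac{x}{|x|}\Big\rangle\varphi\,\dH .
\]
As $\varepsilon\to0$ the left-hand side tends to $\intomega|\nabla u|^{n-2}\langle\nabla u,\nabla\varphi\rangle$, since $|\nabla u|^{n-1}\sim|\mu|^{n-1}(\cnn|x|)^{-(n-1)}$ is integrable near $0$ in $\re^n$. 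For the boundary term, plugging in $\nabla u(x)=-\tfrac{\mu}{\cnn}\tfrac{x}{|x|^2}(1+o(1))$ and using $\mathcal H^{n-1}(\partial B_\varepsilon)=\omega_{n-1}\varepsilon^{n-1}$, $(\cnn)^{n-1}=\omega_{n-1}$, and $\varphi\to\varphi(0)$ uniformly on $\partial B_\varepsilon$, a short computation gives the limit $|\mu|^{n-2}\mu\,\varphi(0)$. Hence (b) holds with $k=|\mu|^{n-2}\mu$.

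The genuinely hard part is Serrin's asymptotic classification itself — that in the unbounded case $u$ and $\nabla u$ are comparable to one fixed multiple of the fundamental solution — which is precisely what feeds the boundary computation in (b), and is why the theorem is quoted as being "contained in" \cite{Serrin 1}, \cite{Serrin 2}. The remaining work is routine: checking that the very special boundary data actually used in the paper ($u$ constant on different connected pieces of the boundary) satisfy the hypotheses of \cite{Serrin 1}, \cite{Serrin 2} and of Theorem \ref{P2, Existence and regularity}, and carrying out the capacity/cut-off estimates in case (a).
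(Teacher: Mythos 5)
The paper does not actually prove this theorem: it simply states that it is ``contained in \cite{Serrin 1} Theorem~10 and \cite{Serrin 2} Theorem~3, combined with Theorem~\ref{P2, Existence and regularity}'' and is the $n$-Laplacian analogue of B\^ocher's theorem. Your plan is therefore not a competing approach but a faithful reconstruction of what the citation is meant to deliver, and all the moving parts are in order: the log-type cut-offs with $\|\nabla\eta_\varepsilon\|_{L^n}\to 0$ (zero $n$-capacity of a point) give $u\in W^{1,n}_{\mathrm{loc}}(\Omega)$ and removability in alternative (a), interior regularity from \cite{diBenedetto}, \cite{Tolksdorf1}, \cite{Evans C1 alpha reg} then promotes $u$ to $C^{1,\alpha}_{\mathrm{loc}}$; in alternative (b), integration by parts on $\Omega\setminus\overline{B_\varepsilon}$ plus Serrin's gradient asymptotics $\nabla u = \mu\,\nabla E\,(1+o(1))$ produces exactly the limit $|\mu|^{n-2}\mu\,\varphi(0)$, using $(\cnn)^{n-1}=\omega_{n-1}$; and the left-hand side converges because $|\nabla u|^{n-1}\sim C|x|^{-(n-1)}\in L^1_{\mathrm{loc}}(\re^n)$.

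Two small remarks. First, the one genuine hypothesis gap you correctly flag yourself is that Serrin's classification (\cite{Serrin 2} Theorems~1--3) is stated for solutions bounded from one side near the singularity; the theorem as stated in the paper does not explicitly impose this, but every function it is applied to (Corollary~\ref{P6, Bocher Theorem}, fed by the limits $g=\lim\lambda_i v_i\ge 0$ in Lemma~\ref{lemma:main properties step 1-3 in Flucher}) is nonnegative, so nothing is lost; it would have been cleaner for the paper to include that hypothesis. Second, your closing sentence about checking that ``the very special boundary data'' satisfy the hypotheses of Serrin is a non-issue: Serrin's theorems are purely local statements near the isolated singularity and are blind to what $u$ does on $\delomega$; the boundary-data restrictions in the paper matter only for Theorem~\ref{P2, Existence and regularity} and Proposition~\ref{proposition:hypothesis Liebermann satisfied}, which the paper invokes separately to get the uniform $C^{1,\alpha}$ bounds, not for the B\^ocher-type dichotomy.
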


\begin{corollary}
\label{P6, Bocher Theorem}
Suppose $u\in W^{1,n}(\Omega\backslash\{0\}),$ $\Delta_nu=0$ in $\Omega\backslash\{0\}$ and $u=0$ on $\delomega.$ Then either $u$ is a constant multiple of the $n$-Green's function, i.e. $u=k G_{\Omega,0}(y)$ or $u$ vanishes identically.
\end{corollary}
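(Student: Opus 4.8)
The plan is to deduce the corollary directly from the preceding Böcher‑type theorem, which applies to $u$ since $u\in W^{1,n}(\Omega\backslash\{0\})\subset W^{1,n}_{loc}(\Omega\backslash\{0\})$ and $\Delta_n u=0$ in $\Omega\backslash\{0\}$. That theorem offers two alternatives, and I would treat each. In alternative (a) one has $u\in W^{1,n}(\Omega)$ with $\Delta_n u=0$ weakly on all of $\Omega$, and by hypothesis $u=0$ on $\partial\Omega$; since the zero function already solves the Dirichlet problem with boundary datum $g\equiv 0$, the uniqueness part of Theorem \ref{P2, Existence and regularity} forces $u\equiv 0$, which is the second possibility in the statement.

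In alternative (b) we are handed a constant $k\in\re$ with
\[
  \intomega|\nabla u|^{n-2}\langle\nabla u,\nabla\varphi\rangle=k\,\varphi(0)\qquad\text{for all }\varphi\in C_c^1(\Omega).
\]
If $k=0$ this is precisely the weak $n$-harmonic equation across the origin, so by the removable‑singularity content of the same theorem $u$ is bounded, hence we fall back into case (a) and conclude $u\equiv 0$. If $k\neq 0$, I would compare $u$ with the $n$-Green's function: by \eqref{eq:definition G Omega x} the function $G:=G_{\Omega,0}$ satisfies the identity above with $k$ replaced by $1$ and vanishes on $\partial\Omega$. Choose the unique $\lambda\in\re$ with $\lambda|\lambda|^{n-2}=k$ — possible because $t\mapsto t|t|^{n-2}$ is a strictly increasing bijection of $\re$ — and set $w:=u/\lambda$. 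Since $p\mapsto|p|^{n-2}p$ is $(n-1)$-homogeneous and odd, a one‑line computation gives $|\nabla w|^{n-2}\nabla w=k^{-1}\,|\nabla u|^{n-2}\nabla u$, so that
\[
  \intomega|\nabla w|^{n-2}\langle\nabla w,\nabla\varphi\rangle=\varphi(0)\qquad\text{for all }\varphi\in C_c^1(\Omega),
\]
and $w=0$ on $\partial\Omega$, while $\Delta_n w=0$ in $\Omega\backslash\{0\}$. Thus $w$ has exactly the defining properties of the $n$-Green's function, and by its uniqueness (recalled in Section \ref{section:n greens function}, following \cite{Kichenassamy Veron}) we get $w=G_{\Omega,0}$, i.e. $u=\lambda\,G_{\Omega,0}$.

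All the substantive input is imported: the Böcher dichotomy (\cite{Serrin 1}, \cite{Serrin 2}) and the uniqueness of $G_{\Omega,0}$. The only point requiring care — and what replaces the trivial linear‑algebra argument available when $n=2$ — is the bookkeeping with the nonlinear operator; but the rescaling $u\mapsto u/\lambda$ interacts with $\Delta_n$ through a single power and the oddness of $p\mapsto|p|^{n-2}p$ absorbs the sign of $k$, so no real obstacle arises there. The other thing to be careful about is the borderline value $k=0$, which is dispatched by routing it back through alternative (a) as above.
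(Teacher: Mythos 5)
Your proposal is correct and is the natural derivation; the paper states the corollary without a separate proof (presenting it as an immediate consequence of the preceding theorem plus uniqueness of $G_{\Omega,0}$), and your argument makes this deduction explicit, including the nonlinear rescaling $u\mapsto u/\lambda$ with $\lambda|\lambda|^{n-2}=k$. The one point worth spelling out is the passage used in both case (a) and your $k=0$ subcase: the hypothesis $u\in W^{1,n}(\Omega\backslash\{0\})$ already yields $u\in W^{1,n}(\Omega)$ because a singleton has zero $n$-capacity, and it is this global $W^{1,n}$ membership (not merely local regularity away from the origin) that lets you invoke the uniqueness statement of Theorem \ref{P2, Existence and regularity} for the Dirichlet problem with zero boundary data.
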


The next lemma states that the hypothesis of the regularity result of  Lieberman \cite{Lieberman} is satisfied, that is, the boundedness of $f$ by $M_0$ in Proposition \ref{P2, Existence and regularity}.

\begin{proposition}
\label{proposition:hypothesis Liebermann satisfied}
Let $\Omega\subset\re^n$ be a bounded opens smooth set such that $\delomega=\bigcup_{i=1}^L \Gamma_i$ where $\Gamma_i$ are the connected smooth components of $\delomega.$ Let $k_i\in \re,$ $i=1,\ldots,L$ be constants such that $|k_i|\leq k.$  Then there exists a constant $M_0=M_0(\Omega,n,k)$ such that
$$
  \|u\|_{L^{\infty}}(\Omega)\leq M_0(\Omega,n,k)
$$
for all $u\in W^{1,n}(\Omega)$ satisfying $\Delta_n u=0$ in $\Omega$ and $u=k_i$ on $\Gamma_i$
\end{proposition}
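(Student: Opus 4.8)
The plan is to reduce everything to local boundedness estimates of De Giorgi--Nash--Moser type for $n$-harmonic functions, after first securing a uniform bound on the Dirichlet energy. \textbf{Step 1 (energy bound).} Since the components $\Gamma_i$ are pairwise disjoint compact smooth hypersurfaces, the locally constant function equal to $k_i$ on $\Gamma_i$ is of class $C^{\infty}(\delomega)$ with norm bounded by $k$, and it admits an extension $g\in W^{1,n}(\Omega)$ (for instance a fixed cutoff construction, or the harmonic extension) whose norm $\|g\|_{W^{1,n}(\Omega)}$ is controlled by $\Omega,n$ and $k$. Any $n$-harmonic $u\in W^{1,n}(\Omega)$ with the prescribed trace minimizes $\varphi\mapsto\intomega|\nabla\varphi|^n$ among all $W^{1,n}(\Omega)$ functions with the same trace (by convexity of $\xi\mapsto|\xi|^n$ and the weak formulation, which is exactly \eqref{eq:theorem:minimum of n gradient}), so $\intomega|\nabla u|^n\le\intomega|\nabla g|^n$. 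As $u-g\in W^{1,n}_0(\Omega)$, the Poincar\'e inequality then yields $\|u\|_{W^{1,n}(\Omega)}\le C(\Omega,n,k)$.

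\textbf{Step 2 (interior bound).} On any ball $B_{2r}(x_0)\subset\Omega$, the local maximum principle for $n$-harmonic functions (see \cite{Serrin 1}, \cite{Tolksdorf1}, \cite{diBenedetto}) gives $\|u\|_{L^{\infty}(B_r(x_0))}\le C(n,r)\|u\|_{L^n(B_{2r}(x_0))}\le C(n,r)\|u\|_{W^{1,n}(\Omega)}$. \textbf{Step 3 (boundary bound).} Fix $x_0\in\Gamma_i$; since the components are separated there is $r>0$, depending only on $\Omega$, with $B_{2r}(x_0)\cap\delomega\subset\Gamma_i$. The function $v=u-k_i$ is $n$-harmonic in $\Omega$ with zero trace on $B_{2r}(x_0)\cap\delomega$, hence $v^{+}$ and $v^{-}$ are nonnegative $n$-subsolutions there; extending each by zero to $B_{2r}(x_0)\setminus\Omega$ produces, because $\delomega$ is smooth and the extended functions vanish on the relevant boundary portion, functions in $W^{1,n}(B_{2r}(x_0))$ that are $n$-subsolutions on the whole ball. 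The local maximum principle for subsolutions then gives $\|v^{\pm}\|_{L^{\infty}(B_r(x_0)\cap\Omega)}\le C(n,r)\|v\|_{L^n(B_{2r}(x_0)\cap\Omega)}\le C(n,r)\big(\|u\|_{W^{1,n}(\Omega)}+k\big)$, so $u$ is bounded on $B_r(x_0)\cap\Omega$ by a constant of the required form.

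\textbf{Step 4 (covering and conclusion).} Cover $\delomega$ by finitely many boundary balls as in Step 3 and cover the compact set $\{x\in\Omega:\operatorname{dist}(x,\delomega)\ge r_0\}$ by finitely many interior balls as in Step 2, the number of balls depending only on $\Omega$; taking the maximum of the resulting local bounds produces $M_0(\Omega,n,k)$. The main obstacle is Step 3: one must justify carefully that the zero-extension of $(u-k_i)^{\pm}$ across the boundary patch $B_{2r}(x_0)\cap\delomega$ is a genuine $n$-subsolution on all of $B_{2r}(x_0)$, and that the constant in the ensuing local maximum principle depends only on $n$ and $r$ and not on $u$; this is precisely where the smoothness of $\delomega$ and the separation of the $\Gamma_i$ are used, and it is the reason this special case of Lieberman's boundary regularity is elementary. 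Everything else is a routine combination of the energy bound from Step 1 with the standard interior regularity theory quoted in Theorem \ref{P5, Uniform Regularity} and the references therein.
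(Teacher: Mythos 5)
Your proof is correct and follows essentially the same strategy as the paper: bound the Dirichlet energy of $u$ via the minimizing property \eqref{eq:theorem:minimum of n gradient} together with a $W^{1,n}$ extension of the boundary data and Poincar\'e, then combine local $L^\infty$ estimates (interior and boundary) with a finite cover to get the uniform bound. The one presentational difference is in how the boundary $L^\infty$ estimate is obtained: the paper proves a self-contained boundary Moser iteration (Lemma \ref{lemma1:Moser iteration}, which it also reuses in the proof of Lemma \ref{lemma:main properties step 1-3 in Flucher}), testing with $\varphi=\xi^n u_m^\beta u$ and exploiting directly that $u$ vanishes on the relevant boundary patch so that $\varphi\in W_0^{1,n}(\Omega)$ for cutoffs $\xi$ supported in the ball rather than in $\Omega$; you instead extend $(u-k_i)^{\pm}$ by zero across the boundary patch, observe that this produces $n$-subsolutions on the full ball, and then invoke the interior local maximum principle for subsolutions. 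These are mathematically equivalent (the subsolution property of the zero extension is itself verified by exactly the same kind of truncated test function, e.g.\ $\psi_\epsilon=\varphi\min\{(v/\epsilon)^+,1\}$, and the local maximum principle for subsolutions is proved by the same Moser iteration). You are right to flag the subsolution claim as the point that needs care; once that is supplied, your route is complete. The paper's choice to prove the explicit boundary Moser iteration lemma rather than cite a subsolution maximum principle is motivated by needing that lemma again in Step 4.1 of the proof of Lemma \ref{lemma:main properties step 1-3 in Flucher}.
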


The following lemma is required for the proof of Proposition \ref{proposition:hypothesis Liebermann satisfied} and has also been used in Lemma \ref{lemma:main properties step 1-3 in Flucher}.  

\begin{lemma}
\label{lemma1:Moser iteration}
Let $\Omega,$ $\Gamma_i$, $u$ and $k_i$ be as in Proposition \ref{proposition:hypothesis Liebermann satisfied}. Then there exists constants $C=C(\Omega,n)>0,$ $q=q(\Omega,n)>1$ such that for any $x\in \Gamma_i$ and $0<r<R$ with $B_{R}(x)\cap\delomega\subset \Gamma_i$ (that is the ball of radius $R$ does not intersect other connected parts of the boundary) one has
\begin{equation}
  \label{eq:lemma:boundary moser iteration}
\left\|(u-k_i)_+\right\|_{L^{\infty}(B_{r}(x)\cap\Omega)}\leq
  \frac{C}{(R-r)^q}\left\|(u-k_i)_+\right\|_{L^n(B_R(x)\cap\Omega)}.
\end{equation}
The same holds true for $(u-k_i)_-$ instead of $(u-k_i)_+$.
\end{lemma}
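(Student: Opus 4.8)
The plan is to run the standard De~Giorgi--Nash--Moser $L^{\infty}$--$L^{n}$ iteration on the sets $B_{\rho}(x)\cap\Omega$. After replacing $u$ by $u-k_{i}$ (still $n$-harmonic) one may assume $k_{i}=0$, and since $-u$ is again $n$-harmonic it suffices to treat $v:=u_{+}\ge 0$, the statement for $(u-k_{i})_{-}$ then following by applying the result to $-u$ with the constant $-k_{i}$. The crucial consequence of the hypothesis $B_{R}(x)\cap\delomega\subset\Gamma_{i}$ is that $u=k_{i}=0$ on \emph{all} of $B_{R}(x)\cap\delomega$, so $v$ has vanishing trace there; hence, for any cutoff $\eta\in C_{c}^{\infty}(B_{R}(x))$ and any $\gamma\ge 1$, the function $\eta^{\,n}v^{\,n\gamma-n+1}$ has zero trace on all of $\partial(B_{R}(x)\cap\Omega)$ and therefore lies in $W^{1,n}_{0}(B_{R}(x)\cap\Omega)\subset W^{1,n}_{0}(\Omega)$, so it is admissible in the weak formulation $\intomega|\nabla u|^{n-2}\langle\nabla u,\nabla\varphi\rangle=0$. (As usual one first replaces $v$ by $\min\{v,M\}$ in the powers to secure integrability and lets $M\to\infty$; beyond this trace statement no regularity of $\delomega$ near $x$ is used.)

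Plugging $\varphi=\eta^{\,n}v^{\,n\gamma-n+1}$ into the equation, discarding the contribution of $\{v=0\}$ (where $\varphi$ and $\nabla\varphi$ vanish a.e.), and applying Young's inequality to the cross term gives the Caccioppoli-type bound
$$
  \int_{B_{R}(x)\cap\Omega}\bigl|\nabla(\eta v^{\gamma})\bigr|^{n}\ \le\ C(n)\,\gamma^{\,n}\int_{B_{R}(x)\cap\Omega}|\nabla\eta|^{n}\,v^{\,n\gamma}.
$$
Since $\eta v^{\gamma}$ is supported in $B_{R}(x)$ and vanishes on $B_{R}(x)\cap\delomega$, extending by zero puts it in $W^{1,n}_{0}(B_{R}(x))$, so the borderline Sobolev embedding (which holds with a finite constant for \emph{any} exponent $n\kappa$ with $\kappa=\kappa(n)>1$) together with Poincar\'e's inequality on the ball $B_{R}(x)$ yields
$$
  \Bigl(\int_{B_{R}(x)\cap\Omega}|\eta v^{\gamma}|^{\,n\kappa}\Bigr)^{1/\kappa}\ \le\ C(n)\,R^{\,n/\kappa}\int_{B_{R}(x)\cap\Omega}\bigl|\nabla(\eta v^{\gamma})\bigr|^{n}.
$$

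Choosing $\eta=1$ on $B_{\rho'}(x)$, supported in $B_{\rho}(x)$ with $|\nabla\eta|\le 2/(\rho-\rho')$, and combining the two displays, one obtains for all $\gamma\ge 1$ and $0<\rho'<\rho$
$$
  \Bigl(\int_{B_{\rho'}(x)\cap\Omega}v^{\,n\kappa\gamma}\Bigr)^{\frac1{n\kappa\gamma}}\ \le\ \Bigl(\frac{C(n)\,\gamma^{\,n}R^{\,n/\kappa}}{(\rho-\rho')^{n}}\Bigr)^{\frac1{n\gamma}}\Bigl(\int_{B_{\rho}(x)\cap\Omega}v^{\,n\gamma}\Bigr)^{\frac1{n\gamma}}.
$$
Iterating with $\gamma_{j}=\kappa^{\,j}$ and radii $\rho_{j}=r+(R-r)2^{-j}$, the product of the constants converges because $\sum_{j}\kappa^{-j}$ and $\sum_{j}j\kappa^{-j}$ are finite; the powers of $\gamma_{j}$ collapse to a finite factor depending only on $n$, the accumulated power of $R$ is $R^{1/(\kappa-1)}\le(2\,\mathrm{diam}\,\Omega)^{1/(\kappa-1)}$, and the powers of $(R-r)^{-1}$ add up to $q:=\kappa/(\kappa-1)>1$. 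This produces
$$
  \|v\|_{L^{\infty}(B_{r}(x)\cap\Omega)}\ \le\ \frac{C(\Omega,n)}{(R-r)^{\,q}}\,\|v\|_{L^{n}(B_{R}(x)\cap\Omega)},
$$
which is \eqref{eq:lemma:boundary moser iteration} (for $R>2\,\mathrm{diam}\,\Omega$ one has $B_{R}(x)\cap\Omega=\Omega$ and the same inequality holds a fortiori).

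I do not expect a genuine obstacle here: the estimate is entirely classical, and the hypothesis $B_{R}(x)\cap\delomega\subset\Gamma_{i}$ is precisely what makes the nonlinear test functions admissible without any hypothesis on the regularity of $\delomega$ near $x$. The only points requiring a little care are the usual ones for the critical exponent $p=n$: the truncation argument legitimizing $\eta^{\,n}v^{\,n\gamma-n+1}$, the remark that the Sobolev embedding with a genuine gain $\kappa>1$ is still available at $p=n$, and the bookkeeping in the iteration showing that the constants telescope to a finite power $(R-r)^{-q}$ with $q=q(n)$ determined by the chosen Sobolev gain $\kappa$.
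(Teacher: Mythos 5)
Your argument is correct and is essentially the paper's own proof: both use the test function $\xi^{n}\,(\text{truncation of }u_{+})^{\beta}u$ in the weak $n$-harmonic equation, derive a Caccioppoli-type inequality via Young's inequality, upgrade it by the Sobolev embedding of $W^{1,n}_0$ into $L^{n\kappa}$ for some fixed $\kappa>1$, and iterate over radii $r+(R-r)2^{-j}$ and exponents $n\kappa^{j}$, with the same telescoping that yields $(R-r)^{-q}$ with $q=\kappa/(\kappa-1)$. The only cosmetic difference is that you invoke the scale-invariant form of the Sobolev step (constant $C(n,\kappa)\,R^{n/\kappa}$) while the paper uses the domain-dependent embedding $W^{1,n}_0(\Omega)\hookrightarrow L^{n\chi}(\Omega)$ with constant $C(\Omega,n,\chi)$; both are valid since $R\le\operatorname{diam}\Omega$.
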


\begin{remark}
\label{remark:locality Lieberman hypothesis}(i) 
The proof of the lemma shows that $1<q<\infty$ can be choosen freely, in which case $C$ depends also on $q.$
\smallskip

(ii) The Lemma is a local result and requires $u$ to be $n$-harmonic only in a neighborhood of the ball $B_R(x)$ and constant on $\delomega\cap B_R(x).$
\end{remark}

\begin{proof}[Proof (Proposition \ref{proposition:hypothesis Liebermann satisfied}).]
By standard interior regularity, see for instance \cite{Lindqvist} Lemma 3.6, the estimate \eqref{eq:lemma:boundary moser iteration} holds true also for balls $B_R(x)$ contained in $\Omega$ (with $k_i$ replaced by $0$). Since $\delomega$ is smooth and compact and its connected components $\Gamma_i$ have positive distance between eachother, we can cover the boundary by finitely many balls $B_r(x),$ $x\in \delomega$ satisfying the estimate \eqref{eq:lemma:boundary moser iteration}. Summing up all these estimates we obtain that there exists a constant $C=C(\Omega)$ such that 
\begin{equation}
 \label{eq:u L infty estimate with u Ln}
  \|u\|_{L^{\infty}(\Omega)}\leq C \left(\|u\|_{L^n(\Omega)}+k\right).
\end{equation}
Choose a function $g\in W^{1,n}(\Omega)$ such that $g=k_i$ on $\Gamma_i$ (take for instance a bounded linear extension operator $T:W^{1-1/n,n}(\delomega)\to W^{1,n}(\Omega)$ ) so that 
$$
  \|g\|_{W^{1,n}}(\Omega)\leq C(\Omega) k.
$$
It follows from  Poincar\'e inequality ($u-g=0$ on $\delomega$) and using \eqref{eq:theorem:minimum of n gradient} that
$$
  \|u-g\|_{L^n(\Omega)}\leq C \|\nabla u-\nabla g\|_{L^n(\Omega)}
  \leq  C \left(\|\nabla u\|_{L^n(\Omega)}+\|\nabla g\|_{L^n(\Omega)}\right)
  \leq 2 C\|\nabla g\|_{L^n(\Omega)}
  \leq  2 C k
$$
From this we obtain the desired estimate for $\|u\|_{L^n}$ which we plug into \eqref{eq:u L infty estimate with u Ln} to conclude.
\end{proof}
\smallskip

We now prove Lemma \ref{lemma1:Moser iteration}, by a standard Moser iteration method.

\begin{proof}[Proof (Lemma \ref{lemma1:Moser iteration})]
Since the Lemma is a local result and $\Delta_n(u-k_i)=0,$ we can assume without loss of generality that $k_i=0$ and $u=0$ on $\Gamma_i$. The estimate \eqref{eq:lemma:boundary moser iteration} is clearly also valid for $(u-k_i)_{-}$ instead of $(u-k_i)_+$, by applying a posteriori the Lemma \ref{lemma1:Moser iteration} to $-u.$
\smallskip 

\textit{Step 1.} We shall first prove that if $\chi>1$ and $\gamma\geq n.$ Then there exists a constant $C=C(\Omega,n,\chi)>0$ such that for any $x\in \Gamma_i$ and $0<t<T$ such that $B_T(x)\cap\delomega\subset \Gamma_i$ 
\begin{equation}
  \label{eq:lemma:boundary moser iteration gamma}
\left\|u_+\right\|_{L^{\gamma\chi}(B_{t}(x)\cap\Omega)}\leq
 \left(\frac{C\gamma}{T-t}\right)^{\frac{n}{\gamma}}
  \left\|u_+\right\|_{L^{\gamma}(B_{T}(x)\cap\Omega)},
\end{equation}
provided the right hand side is finite. We fix and write henceforth for simplicity
$$
  B_T(x)=B_T\,,\quad B_t(x)=B_t\quad\text{ and }\quad \Omega_T=\Omega\cap B_T(x)\,,\quad \Omega_t=\Omega\cap B_t(x)
$$

\smallskip

\textit{Step 1.1.}
By assumption we have that 
\begin{equation}
 \label{eq:moser it hypothesis eta}
  \intomega\langle |\nabla u|^{n-2}\nabla u,\nabla \varphi\rangle=0\quad\text{ for all }\varphi\in W_0^{1,n}(\Omega).
\end{equation}
Let $u_+=\max\{0,u\}.$ Let $\xi\in C_c^1(B_T).$ We define for $m\in\mathbb{N}$ and $\beta\geq n-1$
$$
  u_m(y)=\left\{\begin{array}{rl}
              u_+(y)&\text{ if }u(y)<m 
              \\
              m&\text{ if }u(y)\geq m,
             \end{array}\right.
   \quad\text{ and }\quad \varphi=\xi^nu_m^{\beta} u\in W_0^{1,n}(\Omega),
$$
which we plug into \eqref{eq:moser it hypothesis eta}.
Deriving we obtain
$$
  \nabla\varphi=\xi^nu_m^{\beta}\nabla u+\beta\xi^n u_m^{\beta-1} u\nabla u_m
  +n\xi^{n-1}u_m^{\beta}u\nabla \xi.
$$
This gives that 
\begin{align*}
 \int_{\Omega_T}\xi^nu_m^{\beta}|\nabla u|^{n}+\beta\int_{\Omega_T}\xi^n u_m^{\beta-1} u \langle \nabla u_m,\nabla u\rangle |\nabla u|^{n-2}
 =-n\int_{\Omega_T}\xi^{n-1}u_m^{\beta} u\langle \nabla \xi,\nabla u\rangle |\nabla u|^{n-2}.
\end{align*}
In the second term $\nabla u_m$ appears, which is zero if $u\geq m,$ and therefore we can replace everywhere $u$ by $u_m$ to get
\begin{equation}
 \label{moser it before epsilon trick}
   \int_{\Omega_T}\xi^nu_m^{\beta}|\nabla u|^{n}+\beta\int_{\Omega_T}\xi^n u_m^{\beta}  |\nabla u_m|^{n}
   =-n\int_{\Omega_T}\xi^{n-1}u_m^{\beta} u\langle \nabla \xi,\nabla u\rangle |\nabla u|^{n-2}.
\end{equation}
Take $\epsilon>0,$ which we will soon chose appropriately. We use the estimate
$$
  ab\leq \frac{(n-1) a^{n/(n-1)}}{n}+\frac{b^n}{n},
$$
with $a=\xi^{n-1}|\nabla u|^{n-1} u_m^{\beta(n-1)/n}\epsilon$ and $ b=u_m^{\beta/n} u|\nabla \xi|/\epsilon$ to estimate the right side of \eqref{moser it before epsilon trick}
$$
  -n\int_{\Omega_T}\xi^{n-1}u_m^{\beta} u\langle \nabla \xi,\nabla u\rangle |\nabla u|^{n-2}
  \leq (n-1)\int_{\Omega_T}\xi^n u_m^{\beta} |\nabla u|^n\epsilon^{\frac{n}{n-1}}+\int_{\Omega_T}u_m^\beta u^n|\nabla \xi|^n\frac{1}{\epsilon^n}.
$$
So for an appropriate choice of $\epsilon=\epsilon(n)$ we obtain a constant $C=C(n)$ such that
\begin{equation}
 \label{moser it after epsilon trick}
   \int_{\Omega_T}\xi^nu_m^{\beta}|\nabla u|^{n}+\beta\int_{\Omega_T}\xi^n u_m^{\beta}  |\nabla u_m|^{n}
   \leq C\int_{\Omega_T}u_m^{\beta}u^n|\nabla \xi|^n.
\end{equation}
We now set 
$$
  w=u_m^{\frac{\beta}{n}}u,\quad\Rightarrow\quad \nabla w=\frac{\beta}{n}u_m^{\frac{\beta}{n}-1}\nabla u_m\, u+u_m^{\frac{\beta}{n}}\nabla u.
$$
Recall that if in any product $\nabla u_m$ appears, then $u=u_m$ in that product, so we obtain
\begin{align*}
  |\nabla w|^n\leq &\left(\frac{\beta}{n}\left|u_m^{\frac{\beta}{n}}\right||\nabla u_m|+u_m^{\frac{\beta}{n}}|\nabla u|\right)^n
  \leq
  2^{n-1}\left(\Big(\frac{\beta}{n}\Big)^n u_m^{\beta}|\nabla u_m|^n+u_m^{\beta}|\nabla u|^n\right)
  \smallskip \\
  &=2^{n-1}\left(\frac{\beta^{n-1}}{n^n} \beta u_m^{\beta}|\nabla u_m|^n+u_m^{\beta}|\nabla u|^n\right)
  \smallskip \\
  &\leq 2^{n-1}\left(1+\frac{\beta^{n-1}}{n^n}\right) \left( \beta u_m^{\beta}|\nabla u_m|^n+u_m^{\beta}|\nabla u|^n\right)
\end{align*}
We use this in \eqref{moser it after epsilon trick} to obtain that, renaming $2^{n-1}C$ by $C$ (as we will do henceforth)
$$
  \int_{\Omega_T}\xi^n|\nabla w|^n\leq C \left(1+\frac{\beta^{n-1}}{n^n}\right) \int_{\Omega_T} w^n|\nabla \xi|^n,
$$
Now use that
$$
  |\nabla(\xi w)|^n\leq \left(|\xi|\,|\nabla w|+|\nabla \xi|\, |w|\right)^n\leq 2^{n-1}\left(|\xi|^n|\nabla w|^n+|\nabla \xi|^n|w|^n\right).
$$
We thus get that
\begin{equation}
 \label{moser itera before Sobolev}
    \int_{\Omega_T}|\nabla(\xi w)|^n\leq C \left(1+\frac{\beta^{n-1}}{n^n}\right) \int_{\Omega_T} w^n|\nabla \xi|^n,
\end{equation}
Note that $\xi w\in W_0^{1,n}(\Omega)$ and that $W_0^{1,n}(\Omega)\hookrightarrow L^q(\Omega)$ for any $1\leq q<\infty$ by the Sobolev embedding. In particular there exists a constant $C=C(\chi,\Omega,n)$ such that
$$
  \left(\int_{\Omega_T}|\xi w|^{\chi n}\right)^{\frac{1}{\chi}}=
  \left(\int_{\Omega}|\xi w|^{\chi n}\right)^{\frac{1}{\chi}}\leq C(\chi,\Omega,n)\intomega |\nabla (\xi w)|^n
  =
  C(\chi,\Omega,n)\int_{\Omega_T} |\nabla (\xi w)|^n.
$$
So we obtain from \eqref{moser itera before Sobolev}
$$
  \left(\int_{\Omega_T}|\xi w|^{\chi n}\right)^{\frac{1}{\chi}}
  \leq 
  C \left(1+\frac{\beta^{n-1}}{n^n}\right) \int_{\Omega_T} w^n|\nabla \xi|^n
$$
We now choose $\xi$ such that $\xi=1$ in $B_t$ and
$
  |\nabla \xi|\leq 2/({T-t}).
$
We also assume that $u$ has been extende by zero outside of $\Omega$ (more precisely on the other side of $\Gamma_i$). Thus, using the definition of $w=u_m^{\beta/n} u$ we get
$$
  \left(\int_{B_t}(u_m^{\frac{\beta}{n}}u)^{\chi n}\right)^{\frac{1}{\chi}}
  \leq C
  \left(1+\frac{\beta^{n-1}}{n^n}\right)\frac{2^n}{(T-t)^n}\int_{B_T}u_m^{\beta}u^n
  \leq C
  \left(1+\frac{\beta^{n-1}}{n^n}\right)\frac{2^n}{(T-t)^n}\int_{B_T}u_+^{\beta+n}
$$
We now let $m\to\infty$ and obtain that
$$
  \left(\int_{B_t}u_+^{\chi(\beta+n)}\right)^{\frac{1}{\chi}}
  \leq C
  \left(1+\frac{\beta^{n-1}}{n^n}\right)\frac{2^n}{(T-t)^n}\int_{B_T}u_+^{\beta+n}
$$
Set now $\gamma=\beta+n$ and we get that
$$
  \left(\int_{B_t} u_+^{\gamma\chi}\right)^{\frac{1}{\chi}}\leq C\left(1+ \frac{(\gamma-n)^{n-1}}{n^n}\right) \frac{1}{(T-t)^n}\int_{B_T}u_+^{\gamma}
  \leq C \gamma^n \frac{1}{(T-t)^n}\int_{B_T}u_+^{\gamma}.
$$
From this \eqref{eq:lemma:boundary moser iteration gamma} follows.
\smallskip

\textit{Step 2.} We now use \eqref{eq:lemma:boundary moser iteration gamma} iteratively. Set 
$$
  t_j=r+\frac{R-r}{2^j},\quad \gamma_j=\chi^j n\quad\text{ for }j=0,1,2,\ldots.
$$
Note that $t_0=R,$ $t_j>t_{j+1}$ and $\gamma_0=n,$ $\gamma_{j+1}=\chi\gamma_j.$ Moreover
$t_j-t_{j+1}=(R-r)/{2^{j+1}}.$
So it follows from \eqref{eq:lemma:boundary moser iteration gamma} (by induction the right hand side of the next inequality is bounded for each $j$) that
\begin{equation}
 \label{eq:moser iter before D}
  \|u_+\|_{L^{\gamma_{j+1}}(B_{t_{j+1}})}\leq \left(\frac{C\gamma_j}{R-r}\right)^{\frac{n}{\gamma_j}}\left(2^{j+1}\right)^{\frac{n}{\gamma_j}}\|u_+\|_{L^{\gamma_j}(B_{t_j})}.
\end{equation}
Note that
\begin{align*}
  \left(\frac{C\gamma_j}{R-r}\right)^{\frac{n}{\gamma_j}}\left(2^{j+1}\right)^{\frac{n}{\gamma_j}}
  =&
  \left(\frac{C\chi^j n}{R-r}\right)^{\frac{1}{\chi^j}}\left(2^{j+1}\right)^{\frac{1}{\chi^j}}
  =
  \left(2 C \chi^jn 2^j\right)^{\frac{1}{\chi^j}}(R-r)^{-\frac{1}{\chi^j}}
  \smallskip \\
  \leq & D^{\frac{j}{\chi^j}}(R-r)^{-\frac{1}{\chi^j}},
\end{align*}
for some constant $D=D(\Omega,n,\chi).$ We now set
$a_j=\|u_+\|_{L^{\gamma_j}(B_{t_j})}$ and notice $a_0=\|u_+\|_{L^n(B_R)}.$
Thus we obtain from \eqref{eq:moser iter before D} that
$$
  a_{j+1}\leq D^{\frac{j}{\chi^j}}(R-r)^{-\frac{1}{\chi^j}}a_j.
$$
It follows by induction that 
$$
  a_{j+1}\leq D^{{\sum_{l=0}^j\frac{l}{\chi^l}}} (R-r)^{-\sum_{l=0}^{j}\frac{1}{\chi^l}}a_0\,.
$$
Notice that for all $j$ we have $\|u_+\|_{L^{n\chi^j}(B_r)}\leq a_{j+1}.$
The sums
$$
  \sum \frac{l}{\chi^l}\quad\text{ and }\quad \sum\frac{1}{\chi^l}
$$
are convergent. So letting $j\to\infty$ we obtain that 
$$
  \|u_+\|_{L^{\infty}(B_r)}\leq \frac{C}{(R-r)^q} \|u_+\|_{L^n(B_R)},
$$
which proves the lemma.
\end{proof}

\bigskip

\noindent\textbf{Acknowledgements} The first author was supported by Chilean Fondecyt Iniciaci\'on grant nr. 11150017 and  is member of the Barcelona Graduate School of 
Mathematics. He also acknowledges financial support from the Spanish Ministry of Economy and Competitiveness, through the ``Mar\'ia de Maeztu'' Programme for Units of Excellence in RD (MDM-2014-0445).
The research work of the second author is supported by the Indian "Innovation in Science Pursuit for Inspired Research (INSPIRE)" under the IVR Number: 20140000099 (IFA14-MA43).

We have obtained significant help from A. Adimurthi, who helped us understand and work out the relevant results on the $n$-Laplace equation. We would like to extend our thanks to Sandeep  for fruitful discussions on this problem. Finally, our special thanks goes to Debabrata Karmakar for  his generous help.

\end{document}